\let\pa\partial  
\let\na\nabla  
\let\eps\varepsilon  
\newcommand{\R}{{\mathbb R}} 
\newcommand{\diver}{\operatorname{div}}
\newcommand{\K}{\mathcal{K}} 
\newtheorem{theorem}{Theorem}   
\newtheorem{lemma}[theorem]{Lemma}
\newtheorem{definition}{Definition}
\begin{document}  

\title[Fractional cross-diffusion systems]{Analysis of a fractional 
cross-diffusion system \\ for multi-species populations}

\author[A.~J\"ungel]{Ansgar J\"ungel}

\address{Institute for Analysis and Scientific Computing, Vienna University of  
	Technology, Wiedner Hauptstra\ss e 8--10, 1040 Wien, Austria}
\email{juengel@tuwien.ac.at} 

\author[N.~Zamponi]{Nicola Zamponi}

\address{Institute for Analysis and Scientific Computing, Vienna University of  
	Technology, Wiedner Hauptstra\ss e 8--10, 1040 Wien, Austria}
\email{nicola.zamponi@tuwien.ac.at}

\date{\today}

\thanks{The authors have been partially supported by the Austrian Science Fund (FWF), 
grants P33010, F65, and W1245. This work has received funding from the European 
Research Council (ERC) under the European Union's Horizon 2020 research and 
innovation programme, ERC Advanced Grant no.~101018153.} 

\begin{abstract}
The global in time existence of weak solutions to a cross-diffusion system with
fractional diffusion in the whole space is proved. The equations describe
the evolution of multi-species populations in the regime of large-distance
interactions; they have been derived in the many-particle limit
from moderately interacting particle systems
with L\'evy noise. The existence proof is based
on a three-level approximation scheme, entropy and moment estimates, 
and a new Aubin--Lions compactness lemma in the whole space. 
\end{abstract}

\keywords{Cross diffusion, fractional diffusion, population dynamics,
entropy method, global existence of solutions, moment estimates.}  
 
\subjclass[2000]{35K45, 35K55, 35Q92, 35R11.}

\maketitle


\section{Introduction}

The topic of this paper is the analysis of the following cross-diffusion system
with fractional derivatives, modeling the dynamics of multi-species populations:
\begin{align}
  & \pa_t u_i + \sigma_i(-\Delta)^\alpha u_i - \diver\bigg(\sum_{j=1}^n a_{ij}
	u_i\na(-\Delta)^{(\beta-1)/2} u_j\bigg) = 0\quad\mbox{in }\R^d,\ t>0, \label{1.eq} \\
	& u_i(0) = u_i^0\quad\mbox{in }\R^d, \ i=1,\ldots,n, \label{1.ic}
\end{align}
where $u_i(x,t)$ describes the population density of the $i$th species and $d\ge 2$
is the space dimension. The parameters are $\sigma_i\ge 0$ and $0<\alpha,\beta<1$.
The fractional Laplacian $(-\Delta)^s$ is defined for $0<s<1$ as the singular
integral operator
$$
  (-\Delta)^su(x) = c_{d,s}\int_{\R^d}\frac{u(x)-u(y)}{|x-y|^{d+2s}}dy, \quad
	\mbox{where }c_{d,s} = \frac{4^s\Gamma(d/2+s)}{\pi^{d/2}|\Gamma(-s)|},
$$
for $u\in H^s(\R^d)$. The integral is understood as the principal value, 
and $\Gamma$ denotes the Gamma function. The expression
$\na(-\Delta)^{(\beta-1)/2}$ can be interpreted as a fractional partial derivative
of order $\beta\in(0,1)$ and can be seen as a nonlocal gradient.

System \eqref{1.eq}--\eqref{1.ic} has been derived in \cite{DPR20}
as the many-particle limit
of the following interacting particle system driven by L\'evy noise:
\begin{equation}\label{1.levy}
  dX_i^{k,N}(t) = -\sum_{j=1}^n\frac{1}{N}\sum_{\ell=1}^{N}a_{ij}
	\na(-\Delta)^{(\beta-1)/2} V_N\big(X_i^{k,N}(t)-X_j^{\ell,N}(t)\big)dt 
	+ \sqrt{2\sigma_i}dL_i^k(t),
\end{equation}
where $i=1,\ldots,n$ and $k=1,\ldots,N$, 
$X_i^{k,N}(t)$ is the position of the $k$th particle of species $i$ at
time $t$, $V_N$ is a potential function, 
and $L_i^k$ is a L\'evy process of index $\alpha\in(0,1)$.
L\'evy processes include jumps and large-distance interactions instead of the
short-distance interactions of Brownian motion.
It was shown in \cite{DPR20} that if $V_N$ converges in the sense of
distributions to the delta distribution as $N\to\infty$, 
the empirical measures associated to \eqref{1.levy} converge in a certain sense 
to limiting processes with density $u_i$, solving \eqref{1.eq}--\eqref{1.ic}.
The global in time existence of strong solutions is proved in \cite{DPR20} for
sufficiently small initial data in the $H^s(\R^d)$ norm with $s>d/2$
in the regime $2\alpha>\beta+1$, in which
self-diffusion dominates cross-diffusion. In this paper, we prove the global in time
existence of weak solutions without any smallness
assumption on the initial data and for all values of $\alpha$, $\beta\in(0,1)$.
In particular, our proof allows for the case without self-diffusion, $\sigma_i=0$.
The key idea of our analysis is to exploit the entropy 
structure of \eqref{1.eq}, thus significantly extending the results of \cite{DPR20}. 

Most results on cross-diffusion systems in the literature refer to local models;
see, e.g., the references in \cite{Jue16}. Nonlocal cross-diffusion systems
have been investigated rather recently \cite{DiMo21,DiFa13,GaVe19,GHLP21,JPZ21}. 
Fractional diffusion was introduced in the Keller--Segel system
to model cellular population dispersal with anomalous diffusion \cite{Esc06}. 
Another application is a three-species food-chain cross-diffusion system
with fractional operators \cite{IWM21}. In this paper, we analyze 
the entropy structure of fractional cross-diffusion systems for the first time.

System \eqref{1.eq} can be seen as an extension of the local cross-diffusion system
of \cite{CDJ19}, which is formally obtained from \eqref{1.eq} by setting 
$\alpha=\beta=1$. The entropy structure of the local
model was investigated in \cite{JPZ21,JuZu20}. It turned out that such a structure
holds if there exist numbers $\pi_1,\ldots,\pi_n>0$ such that
$$
  \pi_i a_{ij} = \pi_j a_{ji}\quad\mbox{for }i,j=1,\ldots,n,
$$
and this condition is also assumed in this work.
It can be interpreted as the detailed-balance condition for the
Markov chain associated with $(a_{ij})$, and $(\pi_1,\ldots,\pi_n)$ is the
invariant measure. Together with the parabolicity condition of Petrovskii
\cite{Ama93}, i.e., all eigenvalues of $(a_{ij})\in\R^{n\times n}$ have a real
part, this implies that the matrix $(\pi_i a_{ij})\in\R^{n\times n}$ is
symmetric and positive definite \cite[Prop.~3]{ChJu21}. A formal computation
using a generalized Stroock--Varopoulos inequality (see Lemma \ref{lem.GSVI}
in the Appendix) shows that
\begin{equation}\label{1.ei0}
  \frac{d}{dt}H[u] + 4\sum_{i=1}^n\sigma_i\int_{\R^d}
	|(-\Delta)^{\alpha/2}\sqrt{u_i}|^2dx
	+ \lambda\sum_{i=1}^n\int_{\R^d}|\na(-\Delta)^{(\beta-1)/4} u_i|^2 dx \le 0,
\end{equation}
where $\lambda>0$ is the smallest eigenvalue of $(\pi_i a_{ij})$ and
\begin{equation}\label{1.ent}
  H[u] = \sum_{i=1}^n\pi_i\int_{\R^d}u_i\log u_i dx
\end{equation}
is the entropy functional. Taking into account the mass conservation and the
fractional Gagliardo--Nirenberg inequality, we obtain an estimate for
$u_i$ in $L^p(0,T;L^p(\R^d))$ for some $p>2$. Together with the
$L^2(0,T;H^{(\beta+1)/2}(\R^d))$ bound for $u_i$ from \eqref{1.ei0}, 
this is sufficient to handle the product $u_i\na(-\Delta)^{(\beta-1)/2}u_j$.

The mathematical difficulties to make these observations rigorous are of technical
nature. Indeed, since the fractional integral operator is singular,
we regularize the Riesz kernel $\K(x)=|x|^{1-\beta-d}$ of the Riesz potential
$(-\Delta)^{(\beta-1)/2}v = \K*u$ by some kernel $\K^{(\eps)}$ to define the
approximate scheme. Unfortunately, this distroys the
$L^2(0,T;H^{(\beta+1)/2}(\R^d))$ estimate for $u_i$ that is needed to obtain
a bound for $u_i$ in $L^p(0,T;L^p(\R^d))$ for some $p>2$
(observe that we allow for $\sigma_i=0$). One idea to remedy this
issue is to add the function 
$$
  \kappa g_0[u_i] = \kappa\bigg(u_i^2-\frac{e^{-|x|^2}}{\pi^{d/2}}
	\int_{\R^d}u_i^2 dx\bigg),
$$
to the equation, where $\kappa>0$ is the second approximation parameter. 
This function preserves the mass conservation property and it
provides an $L^1(0,T;L^1(\R^d))$ bound for $u_i^2(\log u_i)_+$, where $z_+=\max\{0,z\}$ 
(note that \eqref{1.ei0} is derived by using formally
the test function $\pi_i\log u_i$). However, 
in order to build an approximated solution, we need to replace $g_0$ by a 
bounded continuous mapping $L^2(\R^d)\to L^2(\R^d)$.
This forces us to introduce a third level of approximation, namely
$$
  \kappa g_\rho[u_i](x) 
	= \kappa u_i(x)(W_\rho*u_i)(x) - \frac{e^{-|x|^2}}{\pi^{d/2}}\int_{\R^d}
	u_i(y)(W_\rho*u_i)(y)dy,
$$
where $W_\rho$ is a mollifier with $\rho>0$ such that $g_\rho[u_i]\to g_0[u_i]$ 
a.e.\ in $\R^d$ as $\rho\to 0$. This yields, after the limit $\rho\to 0$, 
the desired estimate
for $u_i^2(\log u_i)_+$, thus in a Lebesgue space slightly better than $L^2(\R^d)$.

The de-regularization limits $\rho\to 0$, $\eps\to 0$, and $\kappa\to 0$
are based on suitable compactness lemmas. We state and prove
an Aubin--Lions-type compactness result in fractional Sobolev spaces
leading to strong convergence in the critical space $L^2(0,T;L^2(\R^d))$.
Compactness in the whole space $\R^d$ is achieved by controlling some moments
of $u_i$ and applying the compactness result in \cite[Lemma 1]{CGZ20};
see Lemma \ref{lem.comp} in the Appendix.

We summarize our hypotheses:
\begin{enumerate}
\item[(H1)] Data: $d\ge 2$, $\sigma_i\ge 0$, $a_{ij}\ge 0$, $\alpha\in(0,1)$, and 
$\beta\in(0,1)$.
\item[(H2)] Diffusion matrix: All eigenvalues of the matrix 
$A=(a_{ij})\in \R^{n\times n}$ have a positive real part, and the detailed-balance 
condition holds, i.e., there exist $\pi_1,\ldots,\pi_n>0$ such that 
$\pi_i a_{ij} = \pi_j a_{ji}$ for all $i,j=1,\ldots,n$.
\item[(H3)] Initial data: $u^0=(u_1^0,\ldots,u_n^0)$ satisfies
$u_i^0\in L^1(\R^d; (1+|x|^2)^{m/2}dx)$, $u_i^0\log u_i^0\in L^1(\R^d)$ for
$i=1,\ldots,n$, where $0<m<\min\{1,2\alpha\}$. 
\end{enumerate}

We already mentioned that Hypothesis (H2) implies that the matrix 
$(\pi_i a_{ij})\in\R^{n\times n}$ is symmetric and positive definite,
and hence, its eigenvalues are real and positive.
The moment assumption on $u_i$ in Hypothesis (H3)
is needed for the moment estimate of $u_i(t)$ that in turn is used to prove
the compactness in $\R^d$.

\begin{definition}
We say that $u=(u_1,\ldots,u_n):\R^d\times[0,\infty)\to\R^n$ is a {\em weak solution}
to \eqref{1.eq}--\eqref{1.ic} if 
\begin{align*}
& u_i\in L^2(0,T;H^{(\beta+1)/2}(\R^d)), \\
& u_i\in L^\infty(0,T;L^1(\R^d;(1+|x|^2)^{m/2}dx)
\quad\mbox{with $m>0$ as in (H3)}, \\
& \sqrt{u_i}\in L^2(0,T;H^\alpha(\R^d))\quad\mbox{if }\sigma_i>0,
\end{align*}
equation \eqref{1.eq} holds in the sense of $L^q(0,T;W^{-1,q}(\R^d))$ for some
$q>1$, and the initial condition \eqref{1.ic} holds in the sense of
$W^{-1,q}(\R^d)$. 	
\end{definition}
We show in Lemma \ref{lem.beta} below,
using the product rule for the fractional Laplacian \cite[Prop.~1.5]{BWZ17},
that $u_i\in L^\infty(0,T;L^1(\R^d))$
and $\sqrt{u_i}\in L^2(0,T;H^\alpha(\R^d))$ imply that
$(-\Delta)^{\alpha/2}u_i\in L^2(0,T;L^1(\R^d))$ such that the weak formulation
of \eqref{1.eq} makes sense. 

Our main result is as follows.

\begin{theorem}[Global existence]\label{thm.ex}
Let Hypotheses (H1)--(H3) hold. Then there exists a weak solution $u$
to \eqref{1.eq}--\eqref{1.ic}, which is nonnegative, i.e.\ $u_i(t)\ge 0$ a.e.\
in $\R^d$, conserves the mass,
$$
  \int_{\R^d}u_i(t)dx = \int_{\R^d}u_i^0 dx\quad\mbox{for }t>0,\ i=1,\ldots,n,
$$
and satisfies the entropy inequality,
\begin{align}
  \sum_{i=1}^n&\pi_i\int_{\R^d}u_i(t)\log u_i(t)dx 
	+ C\sum_{i=1}^n\sigma_i\int_0^t\int_{\R^d}\big|(-\Delta)^{\alpha/2}\sqrt{u_i}\big|^2
	dxds \label{1.ei} \\
	&{}+ \lambda\sum_{i=1}^n\int_0^t\int_{\R^d}|\na(-\Delta)^{(\beta-1)/4}u_i|^2 dxds
	\le \sum_{i=1}^n\pi_i\int_{\R^d}u_i^0\log u_i^0 dx, \quad t>0. \nonumber
\end{align}
\end{theorem}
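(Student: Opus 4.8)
The plan is to construct the weak solution via the three-level approximation scheme outlined in the introduction, passing to the de-regularization limits in the order $\rho\to 0$, then $\eps\to 0$, then $\kappa\to 0$. First I would fix the parameters $\eps,\kappa,\rho>0$ and study the regularized system in which the Riesz kernel $\K$ of $(-\Delta)^{(\beta-1)/2}$ is replaced by a smooth, integrable kernel $\K^{(\eps)}$, and the source term $\kappa g_\rho[u_i]$ is added to each equation. For the innermost level, the nonlinear term $u_i\na(-\Delta^{(\eps)})^{(\beta-1)/2}u_j$ is, thanks to the mollification, a bounded map on $L^2$, so existence of a solution $u^{(\rho,\eps,\kappa)}$ follows from a Galerkin approximation or a Banach/Leray--Schauder fixed-point argument combined with the $L^2$-theory for the fractional heat semigroup $e^{-t\sigma_i(-\Delta)^\alpha}$; one must also add the vanishing-viscosity-type regularization implicit in (H1) (e.g.\ a small $-\eps\Delta$ if $\sigma_i=0$, or work with the cut-off $g_\rho$ alone) to obtain enough regularity. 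Nonnegativity of $u_i$ is obtained by testing with $(u_i)_- = \min\{0,u_i\}$ and using the fact that the fractional Laplacian satisfies $\int_{\R^d}(-\Delta)^{\alpha/2}v\,(-\Delta)^{\alpha/2}(v_-)\,dx\le 0$ together with the structure of $g_\rho$; mass conservation follows by testing with $1$ and noting that the divergence term and the Gaussian-corrected $g_\rho$ both integrate to zero.

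Next I would derive the a priori estimates uniform in the approximation parameters. Testing \eqref{1.eq} (regularized) with $\pi_i\log u_i$ and summing over $i$, I would use the generalized Stroock--Varopoulos inequality (Lemma \ref{lem.GSVI}) to bound the self-diffusion term from below by $4\sigma_i\int|(-\Delta)^{\alpha/2}\sqrt{u_i}|^2$, use the detailed-balance condition (H2) together with positive definiteness of $(\pi_i a_{ij})$ to bound the cross-diffusion term from below by $\lambda\sum_i\int|\na(-\Delta^{(\eps)})^{(\beta-1)/4}u_i|^2$, and observe that the $\kappa g_\rho$ contribution, after the limit $\rho\to 0$, controls $\kappa\|u_i^2(\log u_i)_+\|_{L^1(0,T;L^1)}$ (the Gaussian correction term contributes only a lower-order piece bounded via mass conservation). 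This yields the entropy inequality in the approximate form of \eqref{1.ei} with $\K^{(\eps)}$ in place of $\K$, plus the extra $\kappa$-term. Combining the entropy with mass conservation gives uniform $L^\infty(0,T;L^1\log L^1)$ and $L^2(0,T;H^{(\beta+1)/2})$ (at $\eps$-level) bounds; the fractional Gagliardo--Nirenberg inequality then upgrades these to a uniform $L^p(0,T;L^p)$ bound for some $p>2$, which is exactly what is needed to make sense of $u_i\na(-\Delta)^{(\beta-1)/2}u_j$ in the limit. The moment estimate — testing with $\pi_i(1+|x|^2)^{m/2}$ and carefully estimating the commutator of $(-\Delta)^\alpha$ with the weight, using $m<2\alpha$ — provides the uniform $L^\infty(0,T;L^1((1+|x|^2)^{m/2}dx))$ bound.

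Then I would pass to the limits successively. For $\rho\to 0$: with $\eps,\kappa$ fixed, the $\eps$-regularization gives enough compactness ($u_i$ bounded in $L^2(0,T;H^{(\beta+1)/2})$ and $\pa_t u_i$ bounded in a negative Sobolev space) to apply the new Aubin--Lions lemma together with the moment bound and Lemma \ref{lem.comp} to get strong convergence in $L^2(0,T;L^2(\R^d))$; combined with $g_\rho[u_i]\to g_0[u_i]$ a.e., this identifies the limit. For $\eps\to 0$: the kernel $\K^{(\eps)}\to\K$ in a suitable sense restores the genuine $H^{(\beta+1)/2}$ estimate (this is where the $\kappa g_0$ term is essential, since it keeps the $L^p$ bound alive even as the $\eps$-regularization of the kernel is removed), and another application of the compactness machinery gives strong $L^2(0,T;L^2)$ convergence, letting one pass to the limit in the nonlinear term via the $L^p$--$L^2(H^{(\beta+1)/2})$ pairing. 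For $\kappa\to 0$: the $\kappa$-term vanishes, the entropy and moment bounds are uniform, and a final compactness argument yields the weak solution satisfying \eqref{1.ei} by weak lower semicontinuity of the convex entropy and the quadratic dissipation functionals. The main obstacle I expect is the $\eps\to 0$ limit in the nonlocal gradient term: one must show that $u_i^{(\eps)}\na(-\Delta^{(\eps)})^{(\beta-1)/2}u_j^{(\eps)}\rightharpoonup u_i\na(-\Delta)^{(\beta-1)/2}u_j$, which requires pairing strong $L^2(L^2)$ (indeed $L^p(L^p)$, $p>2$) convergence of $u_i^{(\eps)}$ against only weak $L^2(0,T;L^2)$ convergence of $\na(-\Delta^{(\eps)})^{(\beta-1)/2}u_j^{(\eps)}$ — so the regularity gap $p>2$ furnished by fractional Gagliardo--Nirenberg is exactly what closes the argument, and verifying that the regularized kernels $\K^{(\eps)}$ do not degrade the $H^{(\beta+1)/2}$ estimate in the limit is the delicate technical point. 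A secondary difficulty is controlling the weighted-moment commutator $[(-\Delta)^\alpha,(1+|x|^2)^{m/2}]$ uniformly, which dictates the restriction $m<\min\{1,2\alpha\}$ in (H3).
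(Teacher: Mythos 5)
Your proposal follows essentially the same strategy as the paper: the three-level approximation (mollified Riesz kernel as a convolution square, the $\kappa g_\rho$ term supplying the $L^2\log L^2$ bound, and an added $\kappa\Delta$ viscosity), fixed-point existence, nonnegativity and mass conservation by the same test functions, the entropy estimate via Stroock--Varopoulos and detailed balance, moment bounds with $m<\min\{1,2\alpha\}$, and the limits in the order $\rho\to0$, $\eps\to0$, $\kappa\to0$ using the whole-space Aubin--Lions lemma; you also correctly identify the two delicate points (recovering the genuine $H^{(\beta+1)/2}$ bound only after $\eps\to0$, and the weak--strong pairing in the nonlocal drift term). The only minor inaccuracy is that the compactness for the $\rho\to0$ limit comes from the $\kappa$-Laplacian's $H^1(\R^d)$ bound rather than from an $H^{(\beta+1)/2}(\R^d)$ bound, but this does not affect the argument.
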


{We discuss some generalizations and extensions. First, we expect that in the
limit $\alpha$, $\beta\to 1$, any solution to \eqref{1.eq}--\eqref{1.ic} converges 
strongly in $L^1(\R^d)$ towards a solution $\overline{u}$ to the limit problem
$$
	\pa_t \overline{u}_i - \sigma_i\Delta \overline{u}_i 
	- \diver\bigg(\sum_{j=1}^n a_{ij}\overline{u}_i\na \overline{u}_j\bigg) 
	= 0\quad\mbox{in }\R^d,\ t>0,
$$
satisfying the initial condition \eqref{1.ic}. Indeed, the uniform estimates
in Section \ref{sec.est} become stronger when $\alpha$, $\beta\to 1$
(see Lemma \ref{lem.beta} below), and
the limit in the equations appears to be rather standard.}

{Second, we may include reaction terms $f_i(u)$ on the right-hand side 
of \eqref{1.eq} 
satisfying suitable conditions that ensure the compatibility with the 
entropy structure. More precisely, we may assume that there exists a constant
$C>0$ such that for suitable functions $u$,
$$
  \int_{\R^d}\pi_i f_i(u)\log u_i dx \le C H[u] 
	+ C\sum_{i=1}^n\|u_i\|_{L^r(\R^d)}^2 + C,
$$
where $1\le r<2d/(d-\beta-1)$, or pointwise bounds on $f_i(u)\log u_i$ that
yield the previous integral estimate. This assumption ensures that the
contributions of the reaction terms are controlled by the entropy $H[u]$
via Gronwall's lemma and by the entropy production via $L^p(\R^d)$ interpolation
and fractional Sobolev embeddings. We also need the quasi-positivity condition
$f_i(u)\ge 0$ for $u_i=0$ to guarantee the nonnegativity of the solution as well as
the bound
$$
  \sum_{i=1}^n\int_{\R^d} f_i(u)(1+|x|^2)^{m/2}dx \le C\sum_{i=1}^n\int_{\R^d}
	u_i(1+|x|^2)^{m/2}dx + C
$$
(recall that $m>0$ is defined in Hypotheiss (H3))
to ensure the boundedness of the total mass and the $m$th moment of the solution.}

{Third, one may ask whether the analysis is more convenient using the Fourier
transform, since the fractional Laplacian and its inverse can be defined trough
this transform; see, e.g., \cite{NPV12,Ste70}. However, the presence of nonlinear
terms, and in particular the use of the test function $\log u_i$ in the entropy
method, complicates the use of the Fourier transform. For this reason, we
define the fractional Laplacian via a singular integral.}

{Fourth, Hypothesis (H2) about the positivity of the real parts of the eigenvalues
of $(a_{ij})$ is crucial for our analysis, since this condition ensures the
parabolicity of the problem in the sense of Petrovskii \cite{Ama93}. One may
ask what happens when the determinant of $(a_{ij})$ vanishes. 
A simple example has been studied
by Bertsch in \cite{BGHP85}, where $n=2$, $a_{ij}=1$ for $i,j=1,2$, and
$\alpha=0$, $\beta=1$. This example was generalized in \cite{DrJu20} to $n\ge 2$
species. An important feature of the model of \cite{BGHP85} is that the sum
$u_1+u_2$ solves a porous-medium equation with a quadratic term.
It was proved that segregated solutions at initial time
remain segregated for every time, which is essentially a consequence of the fact
that solutions to the porous-medium equation with compact support at initial time
remain compactly supported at any time. Since the fractional porous-medium equation
has a similar property \cite{CaVa11a}, we believe that a similar result as that
one in \cite{BGHP85} could be proved also for \eqref{1.eq}--\eqref{1.ic}.
Furthermore, the global existence of weak solutions to this problem with
$\sigma_i=\sigma$ and $a_{ij}=1$ appears to be in reach. 
Indeed, the sum $u_{\rm tot}:=\sum_{i=1}^n u_i$ satisfies the nonlocal
porous-medium equation
$$
  \pa_t u_{\rm tot} + \sigma(-\Delta)^\alpha u_{\rm tot}
	- \diver(u_{\rm tot}\na(-\Delta)^{(\beta-1)/2}u_{\rm tot}) = 0
	\quad\mbox{in }\R^d,\ t>0,
$$
which has been investigated in \cite{CaVa11,CaVa11a}, and the equation satisfied
by each single-species density $u_i$ can be seen as a (fractional) transport equation,
where $u_{\rm tot}$ plays the role of a given potential.}

The paper is organized as follows. We derive {\em formally} some a priori estimates
in Section \ref{sec.est}. Besides being an illustration of our strategy, the
computations will be used in the subsequent sections, in particular for the limit
procedure at the
last approximation level. The approximate problem with three approximation
levels is introduced in Section \ref{sec.approx}, and its global well-posedness
is proved. In Section \ref{sec.limit}, the limit in the approximate problem is shown.
Finally, we collect some technical results and prove an Aubin--Lions-type compactness
lemma in the Appendix.

\subsection*{Notation}

The space $W^{s,p}(\R^d)$ with $s>0$ and $1\le p\le \infty$ is the usual
fractional Sobolev space; we set $H^s(\R^d)=W^{s,2}(\R^d)$. 
We write $\|\cdot\|_p$ for the norm in $L^p(\R^d)$, $1\le p\le\infty$,
and we define for $m>0$ the space
$$
  L^1(\R^d; (1+|x|^2)^{m/2}dx) = \bigg\{v\in L^1(\R^d):
	\int_{\R^d}v(x)(1+|x|^2)^{m/2}dx<\infty\bigg\}.
$$
The characteristic function on a set $B\subset\R^d$ is written as
$\mathrm{1}_B$. Finally,
we denote by $C>0$ a generic constant whose value may change from line to line.


\section{A priori estimates}\label{sec.est}

In this section, we derive {\em formally} some a priori estimates.
First, we provide a proof for the entropy inequality \eqref{1.ei0}.

\begin{lemma}
Let $u$ be a smooth solution to \eqref{1.eq}--\eqref{1.ic}. Then
\begin{equation}\label{2.ei}
  \frac{d}{dt}H[u] + 4\sum_{i=1}^n\sigma_i\int_{\R^d}
	|(-\Delta )^{\alpha/2}\sqrt{u_i}|^2dx
	+ \lambda\sum_{i=1}^n\int_{\R^d}|\na(-\Delta)^{(\beta-1)/4}u_i|^2 dx \le 0,
\end{equation}
where the entropy $H[u]$ is defined in \eqref{1.ent} and
$\lambda>0$ is the smallest eigenvalue of $(\pi_i a_{ij})\in\R^{n\times n}$.
\end{lemma}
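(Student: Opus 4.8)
The plan is to use the test function $\pi_i\log u_i$ in the $i$th equation, sum over $i$, and identify each resulting term. For the time derivative, $\sum_i\pi_i\int_{\R^d}\pa_t u_i\,\log u_i\,dx = \frac{d}{dt}H[u]$ since $\int_{\R^d}\pa_t u_i\,dx=0$ by mass conservation. For the self-diffusion term, I would invoke the generalized Stroock--Varopoulos inequality (Lemma \ref{lem.GSVI} in the Appendix) with the pair $(\log s, s)$: this gives $\int_{\R^d}(-\Delta)^\alpha u_i\,\log u_i\,dx \ge 4\int_{\R^d}|(-\Delta)^{\alpha/2}\sqrt{u_i}|^2\,dx$, which after multiplying by $\sigma_i\ge 0$ and summing produces the second term in \eqref{2.ei} with the correct constant $4$.

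The main work is the cross-diffusion term. After integrating by parts, it reads $-\sum_{i,j}\pi_i a_{ij}\int_{\R^d}\na\log u_i\cdot u_i\na(-\Delta)^{(\beta-1)/2}u_j\,dx = -\sum_{i,j}\pi_i a_{ij}\int_{\R^d}\na u_i\cdot\na(-\Delta)^{(\beta-1)/2}u_j\,dx$, using $u_i\na\log u_i=\na u_i$. Now I would rewrite $\na\cdot\na(-\Delta)^{(\beta-1)/2} = -\Delta\,(-\Delta)^{(\beta-1)/2} = (-\Delta)^{(\beta+1)/2}$, so this term becomes $-\sum_{i,j}\pi_i a_{ij}\int_{\R^d}(-\Delta)^{(\beta+1)/4}u_i\,(-\Delta)^{(\beta+1)/4}u_j\,dx$ after splitting the fractional Laplacian symmetrically (an application of Plancherel, or of the bilinear form identity for fractional Laplacians); equivalently it equals $-\sum_{i,j}\pi_i a_{ij}\int_{\R^d}\na(-\Delta)^{(\beta-1)/4}u_i\cdot\na(-\Delta)^{(\beta-1)/4}u_j\,dx$. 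Writing $w_i=\na(-\Delta)^{(\beta-1)/4}u_i$ (a vector field), the term is $-\int_{\R^d}\sum_{i,j}(\pi_i a_{ij})\,w_i\cdot w_j\,dx$. By Hypothesis (H2), $(\pi_i a_{ij})$ is symmetric positive definite with smallest eigenvalue $\lambda>0$, so pointwise $\sum_{i,j}(\pi_i a_{ij})w_i\cdot w_j \ge \lambda\sum_i|w_i|^2$; integrating gives the lower bound $\lambda\sum_i\int_{\R^d}|\na(-\Delta)^{(\beta-1)/4}u_i|^2\,dx$ with the correct sign to move it to the left-hand side of \eqref{2.ei}.

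The step I expect to require the most care is the manipulation $\na u_i\cdot\na(-\Delta)^{(\beta-1)/2}u_j \mapsto \na(-\Delta)^{(\beta-1)/4}u_i\cdot\na(-\Delta)^{(\beta-1)/4}u_j$: distributing the operator $(-\Delta)^{(\beta-1)/2}$ symmetrically onto both factors under the integral sign, which is justified by Plancherel's theorem for smooth, sufficiently decaying $u_i$ (so all Fourier integrals converge), together with the identity $\widehat{\na f}=i\xi\hat f$ and $\widehat{(-\Delta)^s f}=|\xi|^{2s}\hat f$. Since the lemma only claims the estimate for smooth solutions and is stated as a formal computation, I would not dwell on integrability justifications beyond noting that all integrations by parts and Fourier identities are valid in, say, the Schwartz class; the quantitative content is entirely the positive-definiteness bound from (H2) and the Stroock--Varopoulos inequality, and the rest is bookkeeping.
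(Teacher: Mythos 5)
Your proposal is correct and follows essentially the same route as the paper: test with $\pi_i\log u_i$, apply the generalized Stroock--Varopoulos inequality to the fractional self-diffusion term, and symmetrize $(-\Delta)^{(\beta-1)/2}$ into $(-\Delta)^{(\beta-1)/4}$ on each factor before invoking the positive definiteness of $(\pi_i a_{ij})$. The only cosmetic difference is that you justify the symmetric splitting via Plancherel, where the paper simply calls it an integration by parts; both are the same formal step.
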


\begin{proof}
Using $\log u_i$ {\em formally} as a test function in \eqref{1.eq} yields
\begin{equation}\label{2.aux1}
  \frac{d}{dt}H[u] = -\sum_{i=1}^n\sigma_i\int_{\R^d}\log u_i(-\Delta)^\alpha u_i dx
	- \sum_{i,j=1}^n\pi_i a_{ij}\int_{\R^d}\na u_i\cdot\na(-\Delta)^{(\beta-1)/2}u_j dx.
\end{equation}
We integrate by parts in the last integral and use the positive definiteness
of the matrix $(\pi_i a_{ij})$ to obtain
\begin{align*}
   \sum_{i,j=1}^n\pi_i a_{ij}\int_{\R^d}\na u_i\cdot\na(-\Delta)^{(\beta-1)/2}u_j dx
	&=  \sum_{i,j=1}^n\pi_i a_{ij}\int_{\R^d}\na(-\Delta)^{(\beta-1)/4} u_i
	\cdot\na(-\Delta)^{(\beta-1)/4}u_j dx \\
	&\ge \lambda\sum_{i=1}^n\int_{\R^d}|\na(-\Delta )^{(\beta-1)/4}u_i|^2dx.
\end{align*}
We apply the generalized Stroock--Varopolous inequality in Lemma \ref{lem.GSVI}
(see Appendix \ref{sec.aux})
to the first integral on the right-hand side of \eqref{2.aux1} to conclude the proof.
\end{proof}

The mass conservation and entropy inequality \eqref{2.ei} yield the following 
bounds for $i=1,\ldots,n$:
\begin{equation}\label{2.est1}
  \|u_i\|_{L^\infty(0,\infty;L^1(\R^d))} 
	+ \sigma_i\|\sqrt{u_i}\|_{L^2(0,T;H^\alpha(\R^d))} 
	+ \lambda\|u_i\|_{L^2(0,T;H^{(\beta+1)/2}(\R^d))} \le C.
\end{equation}

We derive further a priori estimates from the entropy inequality and the
Gagliardo--Nirenberg inequality.

\begin{lemma}\label{lem.beta}
Let $u$ be a smooth solution to \eqref{1.eq}--\eqref{1.ic}. Then there
exists a constant $C>0$, not depending on $u$, such that for $i=1,\ldots,n$,
\begin{equation}\label{2.est2}
  \|\na(-\Delta)^{(\beta-1)/2}u_i\|_{L^2(0,T;L^{2d/(d+\beta-1)}(\R^d))}
	+ \sqrt{\sigma_i}\|(-\Delta)^{\alpha/2}u_i\|_{L^2(0,T;L^1(\R^d))} \le C.
\end{equation}
Moreover, there exist $p^*,q^*>1$ such that for $i,j=1,\ldots,n$,
\begin{equation}\label{2.est3}
  \|u_i\na(-\Delta)^{(\beta-1)/2}u_j\|_{L^{q^*}(0,T;L^{p^*}(\R^d))} \le C.
\end{equation}
\end{lemma}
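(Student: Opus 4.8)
The plan is to combine the entropy-production bounds from \eqref{2.est1} with fractional Sobolev embeddings and the fractional Gagliardo--Nirenberg inequality, and to identify the two pieces \eqref{2.est2} and \eqref{2.est3} as consequences of interpolation between the $L^\infty(0,T;L^1)$ mass bound and the $L^2(0,T;H^{(\beta+1)/2})$ bound. First I would treat the nonlocal-gradient term: since $\na(-\Delta)^{(\beta-1)/2}u_i = \na(-\Delta)^{(\beta-1)/4}\big((-\Delta)^{(\beta-1)/4}u_i\big)$ is, up to constants, the Riesz-type operator of order $1-\beta<1$ applied to the function $(-\Delta)^{(\beta-1)/4}u_i$, the Hardy--Littlewood--Sobolev inequality gives $\|\na(-\Delta)^{(\beta-1)/2}u_i\|_{L^{r}(\R^d)}\le C\|\na(-\Delta)^{(\beta-1)/4}u_i\|_{L^2(\R^d)}$ with the Sobolev exponent $r=2d/(d+\beta-1)$ (which is $>1$ since $\beta<1$ and $d\ge 2$). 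Squaring and integrating in time against the $L^2(0,T;\dot H^{(\beta+1)/2})$ bound from \eqref{2.est1} yields the first term in \eqref{2.est2}. For the self-diffusion term $(-\Delta)^{\alpha/2}u_i$, I would write $u_i=(\sqrt{u_i})^2$ and use the product rule for the fractional Laplacian (\cite[Prop.~1.5]{BWZ17}, cited in the excerpt), which bounds $\|(-\Delta)^{\alpha/2}u_i\|_{L^1}$ by a product of $\|\sqrt{u_i}\|_{L^2}=\|u_i\|_{L^1}^{1/2}$ with $\|(-\Delta)^{\alpha/2}\sqrt{u_i}\|_{L^2}$ (plus lower-order fractional-gradient cross terms handled similarly); integrating in time and using \eqref{2.est1} (including the $H^\alpha$-bound for $\sqrt{u_i}$ when $\sigma_i>0$) gives the second term in \eqref{2.est2}.

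For \eqref{2.est3} I would use Hölder in space to split $\|u_i\na(-\Delta)^{(\beta-1)/2}u_j\|_{L^{p^*}}\le \|u_i\|_{L^{a}}\|\na(-\Delta)^{(\beta-1)/2}u_j\|_{L^{r}}$ with $r=2d/(d+\beta-1)$ as above and $1/p^*=1/a+1/r$. The factor $\na(-\Delta)^{(\beta-1)/2}u_j$ lies in $L^2(0,T;L^r)$ by the first part; so I need $u_i\in L^{b}(0,T;L^{a})$ for some $a$ and some $b$ with $1/q^*=1/b+1/2<1$, i.e.\ $b>2$. This is exactly where the extra integrability beyond $L^2(0,T;L^2)$ is needed, and it is supplied by the fractional Gagliardo--Nirenberg inequality: interpolating $\|u_i\|_{L^{a}}\le C\|u_i\|_{\dot H^{(\beta+1)/2}}^{\theta}\|u_i\|_{L^1}^{1-\theta}$ with an appropriate $\theta\in(0,1)$, raising to a power, and integrating in time. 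Since the mass bound in $L^\infty(0,T;L^1)$ costs nothing, the time-integrability of $u_i$ in $L^a$ is $L^{2/\theta}(0,T)$; choosing $a$ slightly larger than $2$ makes $\theta$ slightly larger than its value at $a=2$, but by the Gagliardo--Nirenberg relation $\theta\to$ a value strictly less than $1$ as $a\to 2^+$ in the relevant dimension range, so $2/\theta>2$, giving $b=2/\theta>2$ as required. Tracking the exponents yields explicit $p^*>1$ and $q^*>1$; I would record one convenient admissible choice rather than optimize.

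The main obstacle I anticipate is the bookkeeping to verify that the exponents $p^*$ and $q^*$ can simultaneously be taken strictly greater than $1$, i.e.\ that the Hölder and Gagliardo--Nirenberg exponents are genuinely compatible for all $d\ge 2$ and all $\alpha,\beta\in(0,1)$ — in particular checking that $\dot H^{(\beta+1)/2}(\R^d)$ embeds into $L^a(\R^d)$ for some $a>2$, which requires $(\beta+1)/2<d/2$, equivalently $\beta+1<d$; this holds for $d\ge 2$ since $\beta<1$, but the borderline behaviour at $d=2$ needs the strict inequality $\beta<1$ and a careful statement of the fractional Gagliardo--Nirenberg inequality (with the correct critical exponent $2d/(d-\beta-1)$ when $d>\beta+1$). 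A secondary technical point is justifying the product rule / HLS manipulations at the level of the fractional Laplacian rather than via Fourier multipliers, which is why the excerpt insists on the singular-integral definition; here I would simply invoke \cite[Prop.~1.5]{BWZ17} and the standard mapping properties of the Riesz potential, treating the lower-order cross terms in the product rule by the same interpolation scheme used for the leading terms.
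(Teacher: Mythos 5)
Your proposal is correct and follows essentially the same route as the paper: the Hardy--Littlewood--Sobolev inequality applied to $(-\Delta)^{(\beta-1)/4}u_i$ for the first bound in \eqref{2.est2}, the fractional product rule of \cite[Prop.~1.5]{BWZ17} combined with Cauchy--Schwarz and mass conservation for the second (the paper makes your ``cross term'' explicit: the carr\'e-du-champ integral equals $2\int_{\R^d}\sqrt{u_i}(-\Delta)^{\alpha/2}\sqrt{u_i}\,dx$ and is absorbed by the same Cauchy--Schwarz bound), and the $L^1$--$H^{(\beta+1)/2}$ interpolation giving $u_i\in L^q(0,T;L^p(\R^d))$ with $p,q>2$, followed by H\"older, for \eqref{2.est3}. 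The exponent bookkeeping you outline matches the paper's claim \eqref{2.Lqp}.
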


\begin{proof}
We claim that
\begin{equation}\label{2.Lqp}
  \|u_i\|_{L^q(0,T;L^p(\R^d))} \le C, \quad\mbox{where }q\ge 2\mbox{ and } 
	\frac{1}{p} + \frac{1}{q}\bigg(1+\frac{\beta+1}{d}\bigg) = 1.
\end{equation}
By interpolation, it holds for $1<p,q<\infty$ and $0<\theta<1$ with
$1/p=\theta/r+(1-\theta)$ that
$$
  \|u_i\|_{L^q(0,T;L^p(\R^d))}^q 
	= \int_0^T\|u_i\|_p^q dt 
	\le \int_0^T\|u_i\|_r^{\theta q}\|u_i\|_1^{(1-\theta)q}dt.
$$
Taking into account estimate \eqref{2.est1} and the fractional Sobolev embedding
$H^{(\beta+1)/2}(\R^d)$ $\hookrightarrow L^r(\R^d)$ for $r=2d/(d-\beta-1)$
\cite[Theorem 6.5]{NPV12} and choosing $\theta=2/q\le 1$, we find that
$$
  \|u_i\|_{L^q(0,T;L^p(\R^d))}^q 
	\le C\|u_i\|_{L^\infty(0,T;L^1(\R^d))}^{(1-\theta)q}\int_0^T
	\|u_i\|_{H^{(\beta+1)/2}(\R^d)}^{\theta q}dt \le C.
$$
Inserting $\theta=2/q$ and $1/r=1/2-(\beta+1)/(2d)$, we have
$$
  \frac{1}{p} = \frac{\theta}{r} + (1-\theta)
	= \frac{2}{q}\bigg(\frac12 - \frac{\beta+1}{2d}\bigg) + 1 - \frac{2}{q}
	= 1 - \frac{1}{q}\bigg(1 + \frac{\beta+1}{d}\bigg),
$$
which proves the claim. Choosing $q=p$ in \eqref{2.Lqp} yields
$$
  \|u_i\|_{L^p(0,T;L^p(\R^d))} \le C\quad\mbox{for }p=2 + \frac{\beta+1}{d} > 2.
$$
It follows from the Hardy--Littlewood--Sobolev inequality (Lemma \ref{lem.HLS}
with $s=(1-\beta)/4$) that
\begin{equation*}
  \|\na(-\Delta)^{(\beta-1)/2}u_j\|_{2d/(d+\beta-1)}
	\le C\|\na(-\Delta)^{(\beta-1)/4}u_j\|_2
\end{equation*}
and therefore, because of \eqref{2.est1},
\begin{equation}\label{2.beta}
   \|\na(-\Delta)^{(\beta-1)/2}u_j\|_{L^2(0,T;L^{2d/(d+\beta-1)}(\R^d))} \le C.
\end{equation}
Since $2d/(d+\beta-1)>2$, the product $u_i\na(-\Delta)^{(\beta-1)/2}u_j$ is bounded
in some $L^{q^*}(0,T;$ $L^{p^*}(\R^d))$ for suitable $q^*,p^*>1$.

It remains to derive the bound for $(-\Delta)^{\alpha/2}u_i$. By the product rule
for the fractional Laplacian \cite[Prop.~1.5]{BWZ17},
$$
  (-\Delta)^{\alpha/2}u_i(x) = 2\sqrt{u_i}(x)(-\Delta)^{\alpha/2}\sqrt{u_i}(x)
	- c_{d,\alpha/2}\int_{\R^d}\frac{(\sqrt{u_i(x)}-\sqrt{u_i(y)})^2}{|x-y|^{d+\alpha}}dy.
$$
We take the $L^1(\R^d)$ norm and use the Cauchy--Schwarz inequality to find that
\begin{align*}
  \|(-\Delta)^{\alpha/2}u_i\|_1
	&\le 2\|\sqrt{u_i}\|_2\|(-\Delta)^{\alpha/2}\sqrt{u_i}\|_2
	+ c_{d,\alpha/2}\int_{\R^d}\int_{\R^d}
	\frac{(\sqrt{u_i(x)}-\sqrt{u_i(y)})^2}{|x-y|^{d+\alpha}}dxdy \\
	&= 2\|u_i\|_1^{1/2}\|(-\Delta)^{\alpha/2}\sqrt{u_i}\|_2
	+ 2\int_{\R^d}\sqrt{u_i}(-\Delta)^{\alpha/2}\sqrt{u_i}dx \\
	&\le 4\|u_i\|_1^{1/2}\|(-\Delta)^{\alpha/2}\sqrt{u_i}\|_2.
\end{align*}
After taking the square and integrating over time, we obtain
$$
  \sigma_i\|(-\Delta)^{\alpha/2}u_i\|_{L^2(0,T;L^1(\R^d))}^2
	\le 4\sigma_i\|u_i\|_{L^\infty(0,T;L^1(\R^d))}
	\|(-\Delta)^{\alpha/2}\sqrt{u_i}\|_{L^2(0,T;L^2(\R^d))}^2 \le C.
$$
This ends the proof.
\end{proof}

Next, we derive some moment bounds for $u_i$.

\begin{lemma}\label{lem.moment}
Let $u$ be a smooth solution to \eqref{1.eq}--\eqref{1.ic} and $0<m<\min\{1,2\alpha\}$. 
Then there exists a constant $C>0$, independent of $u$, such that
$$
  \sup_{0<t<T}\int_{\R^d}u_i(x,t)(1+|x|^2)^{m/2}dx \le C(T), \quad i=1,\ldots,n.
$$
\end{lemma}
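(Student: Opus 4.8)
The plan is to test \eqref{1.eq} \emph{formally} with the weight $\phi(x):=(1+|x|^2)^{m/2}$ and to control the two resulting contributions with the a priori bounds already at hand, namely \eqref{2.est1}, \eqref{2.Lqp}, \eqref{2.beta}, and Lemma~\ref{lem.beta}. Multiplying \eqref{1.eq} by $\phi$, integrating over $\R^d$, moving $(-\Delta)^\alpha$ onto $\phi$ by self-adjointness, and integrating by parts in the cross-diffusion term gives
\begin{equation}\label{m.1}
  \frac{d}{dt}\int_{\R^d}u_i\phi\,dx
	= -\sigma_i\int_{\R^d}u_i\,(-\Delta)^\alpha\phi\,dx
	+ \sum_{j=1}^n a_{ij}\int_{\R^d}u_i\,\na(-\Delta)^{(\beta-1)/2}u_j\cdot\na\phi\,dx,
\end{equation}
the boundary terms at infinity vanishing for smooth, sufficiently decaying $u_i$ (the hypothesis $m<2\alpha$ makes all integrals convergent); these manipulations will be justified rigorously at the approximate level.

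The core of the argument is the pointwise bound $\|(-\Delta)^\alpha\phi\|_{L^\infty(\R^d)}\le C$, which is precisely where $m<2\alpha$ enters. Writing $(-\Delta)^\alpha\phi(x)=\tfrac{c_{d,\alpha}}{2}\int_{\R^d}|z|^{-d-2\alpha}(2\phi(x)-\phi(x+z)-\phi(x-z))\,dz$ and splitting the domain at $|z|\sim 1+|x|$, the near-diagonal part is estimated by the second-order Taylor bound $|2\phi(x)-\phi(x+z)-\phi(x-z)|\le C|z|^2(1+|x|)^{m-2}$ (the second derivatives of $\phi$ decay like $(1+|\cdot|)^{m-2}$), giving a contribution $\le C(1+|x|)^{m-2\alpha}$ since $2-2\alpha>0$; the far part is estimated by $|\phi(x)|,|\phi(x\pm z)|\le C(1+|z|)^m$ together with $\int_{|z|\ge 1+|x|}|z|^{m-d-2\alpha}\,dz\le C(1+|x|)^{m-2\alpha}$, the latter integral converging exactly because $m-2\alpha<0$. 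Hence $|(-\Delta)^\alpha\phi(x)|\le C(1+|x|)^{m-2\alpha}\le C$, and the mass conservation in \eqref{2.est1} bounds the first term on the right-hand side of \eqref{m.1} by $C\sigma_i\|u_i^0\|_1\le C$.

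For the cross-diffusion term, the gradient of the weight satisfies $|\na\phi(x)|=m|x|(1+|x|^2)^{(m-2)/2}\le m(1+|x|^2)^{(m-1)/2}\le m$, bounded thanks to $m<1$; since $a_{ij}\ge 0$ and $u_i\ge 0$, H\"older's inequality gives
\begin{equation}\label{m.2}
  \bigg|\sum_{j=1}^n a_{ij}\int_{\R^d}u_i\,\na(-\Delta)^{(\beta-1)/2}u_j\cdot\na\phi\,dx\bigg|
	\le C\sum_{j=1}^n\|u_i\|_{L^a(\R^d)}\,\|\na(-\Delta)^{(\beta-1)/2}u_j\|_{L^{a'}(\R^d)}
\end{equation}
with $a'=2d/(d+\beta-1)$, i.e.\ $a=2d/(d-\beta+1)$. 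One checks $1\le a\le r:=2d/(d-\beta-1)$ (the right inequality, after rearranging, is $3d+1\ge d\beta+\beta^2$, true since $0<\beta<1$), so interpolating $u_i\in L^\infty(0,T;L^1(\R^d))$ and $u_i\in L^2(0,T;L^r(\R^d))$ (the latter from \eqref{2.est1} and the embedding $H^{(\beta+1)/2}(\R^d)\hookrightarrow L^r(\R^d)$) yields $u_i\in L^2(0,T;L^a(\R^d))$. Combined with $\na(-\Delta)^{(\beta-1)/2}u_j\in L^2(0,T;L^{a'}(\R^d))$ from \eqref{2.beta} and the Cauchy--Schwarz inequality in time, the time integral of \eqref{m.2} over $(0,T)$ is bounded by a constant.

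Integrating \eqref{m.1} over $(0,t)$ and inserting both estimates gives $\int_{\R^d}u_i(t)\phi\,dx\le\int_{\R^d}u_i^0\phi\,dx+C\sigma_iT+C\le C(T)$, where $\int_{\R^d}u_i^0\phi\,dx<\infty$ by Hypothesis~(H3); taking the supremum over $t\in(0,T)$ completes the proof. The only genuine obstacle is the pointwise estimate on $(-\Delta)^\alpha\phi$: it is the single place where the balance $m<\min\{1,2\alpha\}$ is used, and the bound fails to be uniform if one only invokes the Lipschitz continuity of $\phi$ rather than the second-order cancellation, so the splitting above is really needed when $2\alpha\le 1$. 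If one prefers to avoid the sharp $L^\infty$ bound, the weaker estimate $|(-\Delta)^\alpha\phi|\le C\phi$ together with Gronwall's lemma works equally well, and the cross-diffusion term is routine once Lemma~\ref{lem.beta} and the interpolation exponents are in place.
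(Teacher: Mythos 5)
Your proposal is correct and follows essentially the same route as the paper: test with $(1+|x|^2)^{m/2}$, bound $(-\Delta)^\alpha$ of the weight pointwise by splitting the singular integral and using a second-order Taylor expansion near the diagonal (this is where $m<2\alpha$ and $\alpha<1$ enter), and control the cross-diffusion term by H\"older's inequality with exponents $2d/(d-\beta+1)$ and $2d/(d+\beta-1)$ via the interpolation/Sobolev bounds of Section~\ref{sec.est}. The only difference is cosmetic: by splitting at radius comparable to $1+|x|$ (which must be taken as a strict fraction of $1+|x|$ so that $|x\pm\theta z|\gtrsim|x|$ in the Taylor step) you obtain the uniform bound $|(-\Delta)^\alpha\phi|\le C(1+|x|)^{m-2\alpha}\le C$ and can dispense with Gronwall, whereas the paper splits at radius $1$, settles for $|(-\Delta)^\alpha\phi|\le C(1+|x|^2)^{m/2}$, and closes with Gronwall's lemma.
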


\begin{proof}
We use {\em formally} the test function $(1+|x|^2)^{m/2}$ with $0<m<\min\{1,2\alpha\}$ 
in \eqref{1.eq}:
\begin{align}\label{2.dudt}
  \frac{d}{dt}\int_{\R^d}u_i(1+|x|^2)^{m/2}dx
	&= -\sigma_i\int_{\R^d}u_i(-\Delta)^\alpha(1+|x|^2)^{m/2}dx \\
	&\phantom{xx}{}
	- \sum_{j=1}^n a_{ij}\int_{\R^d}u_i\na(1+|x|^2)^{m/2}\cdot\na(-\Delta)^{(\beta-1)/2}
	u_j dx. \nonumber
\end{align}
To estimate the first term on the right-hand side, we claim 
that there exists $C>0$ such that 
\begin{equation}\label{2.xm}
  |(-\Delta)^\alpha(1+|x|^2)^{m/2}|\le C(1+|x|^2)^{m/2}\quad\mbox{ for all }x\in\R^d.
\end{equation}
Indeed, we infer from \cite[Lemma 3.2]{NPV12} that 
\begin{align*}
  -(-\Delta)^\alpha(1+|x|)^{m/2} &= \frac{c_{d,\alpha}}{2}\int_{\R^d}
	\frac{(1+|x+y|^2)^{m/2}+(1+|x-y|^2)^{m/2}-2(1+|x|^2)^{m/2}}{|y|^{d+2\alpha}}dy \\
	&=: I_1 + I_2,
\end{align*}
where
\begin{align*}
	I_1 &= \frac{c_{d,\alpha}}{2}\int_{\{|y|>1\}}
	\frac{(1+|x+y|^2)^{m/2}+(1+|x-y|^2)^{m/2}-2(1+|x|^2)^{m/2}}{|y|^{d+2\alpha}}dy, \\
	I_2 &= \frac{c_{d,\alpha}}{2}\int_{\{|y|\le 1\}}
	\frac{(1+|x+y|^2)^{m/2}+(1+|x-y|^2)^{m/2}-2(1+|x|^2)^{m/2}}{|y|^{d+2\alpha}}dy.
\end{align*}
The triangle inequality implies that
$$
  |I_1| \le C\int_{\{|y|>1\}}\frac{|x|^m+|y|^m}{|y|^{d+2\alpha}}dy 
	\le C(1+|x|^2)^{m/2},
$$
since the integrability is ensured if $m-d-2\alpha<-d$ or, equivalently, $m<2\alpha$.
The function $\Phi_x(y):=(1+|x+y|^2)^{m/2}+(1+|x-y|^2)^{m/2}-2(1+|x|^2)^{m/2}$
satisfies $\Phi_y(0)=|\na_y\Phi_y(0)|=0$ and
$$
  |D^2_y\Phi_x(y)| \le C(1 + |x+y|^2)^{m/2-1} + C(1 + |x-y|^2)^{m/2-1}, \quad
	x,y\in\R^d,
$$
and this expression is bounded for all $x,y\in\R^d$. 
We infer from Taylor's theorem that
$|\Phi_x(y)|=\frac12|D^2_y\Phi_x(\theta y)||y|^2 \le C|y|^2$ for $y\in\R^d$,
where $\theta\in[0,1]$ is a suitable number. Therefore,
$|I_2|\le C\int_{\{|y|\le 1\}}|y|^{2-d-2\alpha}dy\le C\le C(1+|x|^2)^{m/2}$, 
since $\alpha<1$. This shows the claim.

We estimate the last term in \eqref{2.dudt}. Choosing $p=2d/(d-\beta+1)$ in
\eqref{2.Lqp}, we find that
$$
  \|u_i\|_{L^q(0,T;L^{2d/(d-\beta+1)}(\R^d))} \le C, \quad\mbox{where }
	q = \frac{2(d+\beta+1)}{d+\beta-1} > 2.
$$
Because of \eqref{2.Lqp} and \eqref{2.beta}, the product 
$u_i\na(-\Delta)^{(\beta-1)/2}u_j$ can be estimated according to
$$
  \|u_i\na(-\Delta)^{(\beta-1)/2}u_j\|_{L^r(0,T;L^1(\R^d))} \le C
	\quad\mbox{for some }r>1.
$$
Taking into account that $\na(1+|x|^2)^{m/2}$ is bounded in $\R^d$ if $m<1$, we obtain
$$
  -\sum_{j=1}^n a_{ij}\int_0^t\int_{\R^d}u_i\na(1+|x|^2)^{m/2}
	\cdot\na(-\Delta)^{(\beta-1)/2}	u_j dx d\tau \le C.
$$
Summarizing, we conclude from \eqref{2.dudt} that
$$
  \int_{\R^d}u_i(t)(1+|x|^2)^{m/2}dx
	\le \int_{\R^d}u_i^0(1+|x|^2)^{m/2}dx + C\int_0^t\int_{\R^d}u_i
	(1+|x|^2)^{m/2}dxds + C
$$
for some $C>0$, which shows the result after applying Gronwall's lemma.
\end{proof}

\begin{lemma}
Let $u$ be a smooth solution to \eqref{1.eq}--\eqref{1.ic}. Then there exist
constants $C>0$ and $p>1$, independent of $u$, such that
$$
  \|\pa_t u_i\|_{L^p(0,T;W^{-1,p}(\R^d))} \le C.
$$
\end{lemma}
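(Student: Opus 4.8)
The plan is to read the time derivative off equation \eqref{1.eq},
\begin{equation*}
  \pa_t u_i = -\sigma_i(-\Delta)^\alpha u_i
	+ \diver\bigg(\sum_{j=1}^n a_{ij}u_i\na(-\Delta)^{(\beta-1)/2}u_j\bigg),
\end{equation*}
and to bound the two groups of terms on the right-hand side separately in
$L^p(0,T;W^{-1,p}(\R^d))$ for a suitable exponent $p>1$ close to $1$.

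The flux term is essentially already done in Lemma \ref{lem.beta}: estimate
\eqref{2.est3} gives $\sum_j a_{ij}u_i\na(-\Delta)^{(\beta-1)/2}u_j
\in L^{q^*}(0,T;L^{p^*}(\R^d))$, so its divergence lies in
$L^{q^*}(0,T;W^{-1,p^*}(\R^d))$. I would make explicit that the pair
$(q^*,p^*)$ is not unique: combining the one-parameter family of bounds
\eqref{2.Lqp} for $u_i$ (as $q$ ranges over $[2,\infty)$) with \eqref{2.beta}
through H\"older's inequality produces such a bound for \emph{every} $p^*$ in an
interval of the form $(1,d/(d-1))$, with a corresponding $q^*>1$; in particular
$p^*$ may be chosen as close to $1$ as we wish.

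For the fractional diffusion term, which is only present when $\sigma_i>0$, I
would use the $L^1$ estimate $\sqrt{\sigma_i}\,\|(-\Delta)^{\alpha/2}u_i\|_{L^2(0,T;L^1(\R^d))}\le C$
contained in \eqref{2.est2}, together with the factorization
$(-\Delta)^\alpha u_i=(-\Delta)^{\alpha/2}[(-\Delta)^{\alpha/2}u_i]$. For
$1<r<\infty$ and $s\in(0,1)$ with $sr'>d$, the Morrey embedding
$W^{s,r'}(\R^d)\hookrightarrow L^\infty(\R^d)$ holds, whence
$L^1(\R^d)\hookrightarrow W^{-s,r}(\R^d)$ by duality, while $(-\Delta)^{\alpha/2}$
is bounded from $W^{-s,r}(\R^d)$ into $W^{-s-\alpha,r}(\R^d)$. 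Choosing
$s=1-\alpha$, for which the condition $sr'>d$ reads $r<d/(d-1+\alpha)$, I obtain
$\sigma_i(-\Delta)^\alpha u_i\in L^2(0,T;W^{-1,r}(\R^d))$, the norm being
controlled by $\sqrt{\sigma_i}$ times $\sqrt{\sigma_i}\,\|(-\Delta)^{\alpha/2}u_i\|_{L^2(0,T;L^1(\R^d))}\le C$.

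To conclude I would fix one exponent $p$ with $1<p<d/(d-1+\alpha)$ (note
$d/(d-1+\alpha)\le d/(d-1)$), used at the same time as the $r$ of the diffusion
estimate and the $p^*$ of the flux estimate, and chosen close enough to $1$ that
$p\le\min\{2,q^*\}$ for the resulting $q^*=q^*(p)>1$. Since $(0,T)$ is bounded,
both contributions then lie in $L^p(0,T;W^{-1,p}(\R^d))$, and adding them yields
the assertion. \textbf{The one point requiring care} is that on the whole space
the relevant Lebesgue and Sobolev spaces do not nest, so one cannot simply
retain the worse of the two exponents; the way around this is exactly the
freedom in \eqref{2.Lqp}, which lets one push $p^*$ down to the constraint
$p<d/(d-1+\alpha)$ dictated by the $L^1$-based bound for the fractional
diffusion. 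Everything else --- H\"older's inequality, the mapping properties of
$(-\Delta)^{\alpha/2}$, and the fractional Sobolev embeddings --- is routine.
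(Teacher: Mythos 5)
Your proposal is correct, and the overall decomposition (flux term handled via \eqref{2.est3} and the divergence mapping $L^{p^*}\to W^{-1,p^*}$; fractional diffusion handled separately) is the same as the paper's. The difference lies in the diffusion term: the paper's proof of this lemma simply writes $\sigma_i\|(-\Delta)^{\alpha/2}u_i\|_{L^p(0,T;L^p(\R^d))}\le C$, which is not directly available from \eqref{2.est2} --- that estimate only gives $(-\Delta)^{\alpha/2}u_i\in L^2(0,T;L^1(\R^d))$, and $L^1(\R^d)$ does not embed into $L^p(\R^d)$ for $p>1$. Your duality argument ($L^1(\R^d)\hookrightarrow W^{-(1-\alpha),r}(\R^d)$ via Morrey for $r<d/(d-1+\alpha)$, then lifting by $(-\Delta)^{\alpha/2}$) closes exactly this gap; it is in fact the dual formulation of what the paper does in its rigorous version of this estimate (Lemma \ref{lem.est}, bound for $I_{10}$), where the test function is taken in $W^{1,p}(\R^d)$ with $p>d/(1-\alpha)$ so that $(-\Delta)^{\alpha/2}\phi\in L^\infty(\R^d)$ --- the conjugate exponent condition there is precisely your $r<d/(d-1+\alpha)$. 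Your additional observation that the exponents for the two terms must be matched (since Lebesgue spaces on $\R^d$ do not nest), and that the one-parameter family \eqref{2.Lqp} allows $p^*$ to be pushed down below $d/(d-1+\alpha)$ while keeping $q^*$ bounded away from $1$, is a point the paper's two-line proof glosses over; your treatment is the more careful one.
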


\begin{proof}
It follows from estimates \eqref{2.est2} and \eqref{2.est3} that there exists
$p>1$ such that
\begin{align*}
  \|\pa_t u_i\|_{L^p(0,T;W^{-1,p}(\R^d))}
	&\le \sigma_i\|(-\Delta)^{\alpha/2}u_i\|_{L^p(0,T;L^p(\R^d))} \\
	&\phantom{xx}{}
	+ \sum_{j=1}^n a_{ij}\|u_i\na(-\Delta)^{(\beta-1)/2}u_j\|_{L^p(0,T;L^p(\R^d))}
	\le C,
\end{align*}
which finishes the proof.
\end{proof}


\section{Approximate scheme}\label{sec.approx}

We approximate equation \eqref{1.eq} by introducing three approximation levels.
First, we regularize the Riesz potential. Noting that
$(-\Delta)^{(\beta-1)/2}u=\K_{(1-\beta)/2}*u$, where 
$\K_{(1-\beta)/2}(x) = |x|^{1-\beta-d}$ for $x\in\R^d$ is the kernel of the
Riesz potential, we define the
approximation $\K_s^{(\eps)}$ of $\K_s$ by
\begin{align}\label{2.defK}
  & \K_s^{(\eps)} := \widetilde\K_{s/2}^{(\eps)}*\widetilde\K_{s/2}^{(\eps)},
	\quad\mbox{where }\widetilde\K_{s/2}^{(\eps)}\in C_0^2(\R^d), \\
	& 0\le \widetilde\K_{s/2}^{(\eps)}\le \widetilde\K_{s/2}^{(\eps')}\le
	\K_{s/2}\quad\mbox{in }\R^d\mbox{ for }0<\eps'<\eps, \nonumber \\
	& \widetilde\K_{s/2}^{(\eps)}(x) = \K_{s/2}(x)\quad\mbox{for }\eps\le|x|\le 1/\eps.
	\nonumber
\end{align} 
Since $\int_{\R^d}u \K_s^{(\eps)}dx$ generally does not preserve the nonnegativity
for $u\ge 0$, we define $\K_s^{(\eps)}$ as a ``convolution square'' to guarantee
this property. Second, we introduce the mollifier
\begin{align*}
  & W_\rho(x) := \rho^{-d}W_1(x/\rho)\quad\mbox{for }x\in\R^d,\quad\mbox{where } \\
	& W_1\in C_0^0(\R^d),\quad W_1\ge 0\mbox{ in }\R^d,\quad \|W_1\|_1=1,
\end{align*}
that satisfies $W_\rho*u\to u$ a.e.\ in $\R^d$,
and the mapping $g_\rho:L^2(\R^d)\to L^2(\R^d)\cap L_0^1(\R^d)$,
$$
  g_\rho[u](x) := u(x)(W_\rho*u)(x) - \frac{e^{-|x|^2}}{\pi^{d/2}}\int_{\R^d}
	u(y)(W_\rho*u)(y)dy, \quad u\in L^2(\R^d),
$$
where $L_0^1(\R^d)$ is the space of $L^1(\R^d)$ functions with vanishing average.
This mapping satisfies the following properties:
\begin{align}
  & \|g_\rho[u]\|_1 \le 2\|u\|_2^2, \quad \|g_\rho[u]\|_2 \le C(\rho)\|u\|_2^2, 
	\label{3.g1} \\
	& \|g_\rho[u]-g_\rho[v]\|_2 \le C(\rho)\|u+v\|_2\|u-v\|_2, \label{3.g2}
\end{align}
where $u,v\in L^2(\R^d)$. These inequalities follow from the Young convolution
inequality, 
\begin{align*}
  \|u(W_\rho*u)\|_1 &\le \|u\|_2\|W_\rho*u\|_2
  \le \|u\|_2^2\|W_\rho\|_1 = \|u\|_2^2, \\
	\|u(W_\rho*u)\|_2 &\le \|u\|_2\|W_\rho*u\|_\infty
  \le \|u\|_2^2\|W_\rho\|_2 \le C(\rho)\|u\|_2^2.
\end{align*}
As explained in the introduction, 
the function $g_\rho$ is needed to obtain an $L^2\log L^2$
estimate, which is used to obtain strong convergence of the sequence of approximate 
solutions in $L^2(\R^d)$.
Furthermore, we add a Laplacian to \eqref{1.eq}. 
This leads to the approximate problem
\begin{align}
  & \pa_t u_i^{(\rho,\eps,\kappa)} - \kappa\Delta u_i^{(\rho,\eps,\kappa)}
	+ \sigma_i(-\Delta)^\alpha u_i^{(\rho,\eps,\kappa)} 
	+ \kappa g_\rho[u_i^{(\rho,\eps,\kappa)}] \label{3.eq} \\
	&\phantom{xx}{}= \diver\bigg(\sum_{j=1}^n a_{ij}(u_i^{(\rho,\eps,\kappa)})_+
	\na\K_{(1-\beta)/2}^{(\eps)}*u_j^{(\rho,\eps,\kappa)}\bigg), \nonumber \\
	& u_i^{(\rho,\eps,\kappa)}(0) = u_i^0\quad\mbox{in }\R^d,\ i=1,\ldots,n,
	\label{3.ic}
\end{align}
where $z_+=\max\{0,z\}$ denotes the positive part of $z\in\R$.

\subsection{Local well-posedness of the approximate problem}

We prove the existence of a local solution to \eqref{3.eq}--\eqref{3.ic}
by applying Banach's fixed-point theorem.
To this end, we introduce for $R>2\|u_0\|_2$ and $T>0$ the space
$$
  X_{R,T} := \big\{v\in C^0([0,T];L^2(\R^d)):\|v_i\|_{L^\infty(0,T;L^2(\R^d))}
	\le R,\ i=1,\ldots,n\big\}
$$
and the fixed-point mapping $F:X_{R,T}\to X_{R,T}$, $F(v)=u$, where 
$u=(u_1,\ldots,u_n)$ is the unique solution to the linear problem
\begin{align}
  & \pa_t u_i - \kappa\Delta u_i + \sigma_i(-\Delta)^\alpha u_i
	= -\kappa g_\rho[v_i] +	\diver\bigg(\sum_{j=1}^n a_{ij}(v_i)_+
	\na\K_{(1-\beta)/2}^{(\eps)}*v_j\bigg), \label{3.lin} \\
	& u_i(0) = u_i^0\quad\mbox{in }\R^d,\ i=1,\ldots,n. \nonumber
\end{align}
Since the kernel is regularized, this problem has a unique solution 
$u=(u_1,\ldots,u_n)$ with $u_i\in L^2(0,T;H^1(\R^d))$, 
$\pa_t u_i\in L^2(0,T;H^{-1}(\R^d))$, implying that 
$u_i\in C^0([0,T];L^2(\R^d))$.

We show that the mapping $F$ is well defined. We use the test function
$u_i$ in the weak formulation of \eqref{3.lin} and take into account
\eqref{3.g1}:
\begin{align*}
  \frac12\|u_i(t)&\|_2^2 + \kappa\int_0^t\|\na u_i\|_2^2ds
	+ \sigma_i\int_0^t\|(-\Delta)^{\alpha/2}u_i\|_2^2ds \\
	&\le \frac12\|u_i^0\|_2^2 +	C(\kappa,\rho)\int_0^t\|v_i\|_2^2\|u_i\|_2 ds \\
	&\phantom{xx}{}+ C\sum_{j=1}^n\int_0^t\|(v_i)_+\na\K_{(1-\beta)/2}^{(\eps)}*v_j\|_2
	\|\na u_i\|_2 ds.
\end{align*}
We apply the Young (convolution) inequality to obtain for $0<t<T$,
\begin{align*}
  \|u_i(t)&\|_2^2 + \kappa\int_0^t\|\na u_i\|_2^2ds
	+ \sigma_i\int_0^t\|(-\Delta)^{\alpha/2}u_i\|_2^2ds \\
	&\le 2\|u_i^0\|_{L^2(\R^d)}^2 + C(\kappa,\rho)\int_0^t\|v_i\|_2^4 ds \\
	&\phantom{xx}{}+ C(\kappa)\sum_{j=1}^n\int_0^t\|(v_i)_+\|_2^2
	\|\na\K_{(1-\beta)/2}^{(\eps)}\|_2^2\|v_j\|_2^2 ds \\
	&\le 2\|u_i^0\|_{L^2(\R^d)}^2 + C(\eps,\kappa,\rho)T\sum_{j=1}^n
	\|v_j\|_{L^\infty(0,T;L^2(\R^d))}^4.
\end{align*}
Therefore, since $\|u_i^0\|_2 < R/2$,
if $T>0$ is sufficiently small, we infer that $u\in X_{R,T}$,
proving the well-posedness of $F$. 

Next, we show that $F$ is a contraction on $X_{R,T}$. Let $v,v'\in X_{R,T}$
and set $u=F(v)$, $u'=F(v')$. The test function $u_i-u_i'$ in the weak
formulation of 
\begin{align*}
  \pa_t&(u_i-u_i') - \kappa\Delta(u_i-u_i') + \sigma_i(-\Delta)^\alpha(u_i-u_i')
	+ \kappa(g_\rho[v_i]-g_\rho[v_i']) \\
	&= \diver\bigg(\sum_{j=1}^n a_{ij}\big[\big((v_i)_+ - (v_i')_+\big)
	\na\K_{(1-\beta)/2}^{(\eps)}*v_j + (v_i')_+\na\K_{(1-\beta)/2}^{(\eps)}*(v_j-v_j')
	\big]\bigg)
\end{align*}
leads, after similar computations as before and using \eqref{3.g2}, for $0<t<T$, to
\begin{align*}
 \|(&u_i-u_i')(t)\|_2^2 + \kappa\int_0^t\|\na(u_i-u_i')\|_2^2 ds
  + \sigma_i\int_0^t\|(-\Delta)^{\alpha/2}(u_i-u_i')\|_2^2 ds \\
  &\le C(\rho)\int_0^t\|v_i+v_i'\|_2^2\|v_i-v_i'\|_2^2 ds
  + C(\kappa)\sum_{j=1}^n\int_0^t\big(\|(v_i)_+ - (v_i')_+\|_2^2
  \|\na\K_{(1-\beta)/2}^{(\eps)}*v_j\|_\infty^2 \\
	&\phantom{xx}{}
	+ \|(v_i')_+\|_2^2\|\na\K_{(1-\beta)/2}^{(\eps)}*(v_j-v_j')\|_\infty^2\big)ds \\
	&\le C(\eps,\kappa,\rho)\sum_{j,k=1}^n\int_0^t\|v_j-v_j'\|_2^2
	\big(\|v_k\|_2^2 + \|v_k'\|_2^2\big)ds \\
	&\le C(\eps,\kappa,\rho,R)T\sum_{j=1}^n\|v_j-v_j'\|_{L^\infty(0,T;L^2(\R^d))}^2.
\end{align*}
Hence, if $T>0$ is sufficiently small, $F$ is a contraction on $X_{R,T}$.
We conclude from Banach's fixed-point theorem that there exists $T^*>0$ and a unique
fixed point $u\in X_{R,T^*}$ of $F$, i.e.\ a unique solution
$u^{(\eps,\kappa,\rho)}\in L^2(0,T^*;H^1(\R^d))$ with $\pa_t u_i^{(\eps,\kappa,\rho)}
\in L^2(0,T^*;H^{-1}(\R^d))$ to \eqref{3.eq}--\eqref{3.ic}.

\subsection{Uniform bounds and global well-posedness}\label{sec.global}

We show that the solution $u=u^{(\eps,\kappa,\rho)}\in C^0([0,T^*];L^2(\R^d))$, 
derived in the previous subsection, is actually global in time. 
First, we prove that $u_i(t)\ge 0$ for $t\in[0,T^*]$. We use the test function
$(u_i)_-=\min\{0,u_i\}$ as a test function in the weak formulation of \eqref{3.eq}:
\begin{align*}
  \frac12&\int_{\R^d}(u_i)_-^2(t) dx
	+ \kappa\int_0^t\int_{\R^d}|\na(u_i)_-|^2 dxds
	= -\sigma_i\int_0^t\int_{\R^d}(u_i)_-(-\Delta)^{\alpha}u_i dxds \\
  &\phantom{xx}{}- \kappa\int_0^t\int_{\R^d}(u_i)_-g_\rho[u_i]dxds
	- \sum_{j=1}^na_{ij}\int_0^t\int_{\R^d}(u_i)_+(\na\K_{(1-\beta)/2}^{(\eps)}*u_j)
	\cdot\na(u_i)_- dxds \\
	&=: I_3+I_4+I_5.
\end{align*}
Since $(u_i)_+\na(u_i)_-=(u_i)_+\mathrm{1}_{\{u_i<0\}}\na u_i=0$, 
we have $I_5=0$. Moreover, by a symmetry argument (also see \cite[Lemma 7.4]{BKM10}),
\begin{align*}
  I_3 &= -\frac{\sigma_i c_{d,\alpha}}{2}\int_{\R^d}\int_{\R^d}
	\frac{[(u_i(x))_- - (u_i(y))_-](u_i(x)-u_i(y))}{|x-y|^{d+2\alpha}}dxdy \le 0, 
	\quad\mbox{and} \\
  I_4 &\le -\int_0^t\int_{\R^d}u_i(W_\rho*u_i)(u_i)_- dxds
	= -\int_0^t\int_{\R^d}(u_i)_-^2(W_\rho*u_i) dxds \\
	&\le \int_0^t\|W_\rho*u_i\|_\infty\|(u_i)_-\|_2^2 ds 
	\le C(\rho)\int_0^t\|u_i\|_2\|(u_i)_-\|_2^2 ds.
\end{align*}
We conclude that for $0<t<T^*$,
$$
  \|(u_i)_-^2(t)\|_2^2
	\le C(\rho)\int_0^t\|u_i\|_2\|(u_i)_-\|_2^2 ds.
$$
Since $t\mapsto\|u_i(t)\|_2$ is continuous $[0,T^*]$, we can apply the Gronwall lemma
to conclude that $(u_i)_-(t)=0$ and hence $u_i(t)\ge 0$ for $t\in[0,T^*]$.

Now, we show the conservation of mass. 

\begin{lemma}[Conservation of mass]\label{lem.mass}
Let $u=u^{(\eps,\kappa,\rho)}$ be a weak solution to \eqref{3.eq}--\eqref{3.ic}
on $[0,T^*]$. Then $\|u_i(t)\|_1 = \|u_i^0\|_1$ for any $t\in[0,T^*]$.
\end{lemma}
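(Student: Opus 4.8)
Since $u_i\ge 0$ by the preceding step, $(u_i)_+=u_i$ in \eqref{3.eq}. The plan is to integrate \eqref{3.eq} over $\R^d$; the obstruction is that $\chi\equiv 1$ is not admissible in the energy class $L^2(0,T^*;H^1(\R^d))$ and that $(-\Delta)^\alpha$ does not admit a useful compactly supported cut-off, so I would instead pass through the mild (Duhamel) formulation and exploit that the regularized linear operator $\mathcal{L}_i:=\kappa\Delta-\sigma_i(-\Delta)^\alpha$ generates a mass-preserving semigroup.

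First I would record two elementary facts. $(i)$ Because $\widetilde\K_{(1-\beta)/4}^{(\eps)}\in C_0^2(\R^d)$, the kernel $\K_{(1-\beta)/2}^{(\eps)}$ and its first and second derivatives lie in $L^1(\R^d)$, so by Young's convolution inequality $\na\K_{(1-\beta)/2}^{(\eps)}*u_j$ and $\Delta\K_{(1-\beta)/2}^{(\eps)}*u_j$ are bounded in $L^2(\R^d)$ by $C(\eps)\|u_j\|_2$. Since $u_i\in L^2(0,T^*;H^1(\R^d))\cap L^\infty(0,T^*;L^2(\R^d))$, the flux $F_i:=\sum_{j=1}^n a_{ij}u_i\,\na\K_{(1-\beta)/2}^{(\eps)}*u_j$ and its divergence $\diver F_i=\sum_j a_{ij}\big(\na u_i\cdot(\na\K_{(1-\beta)/2}^{(\eps)}*u_j)+u_i\,(\Delta\K_{(1-\beta)/2}^{(\eps)}*u_j)\big)$ both lie in $L^1(\R^d)$ for a.e.\ $t$, with $\|F_i(t)\|_1+\|\diver F_i(t)\|_1$ integrable on $(0,T^*)$; testing $\diver F_i$ against $\chi_R=\chi(\cdot/R)$ with $\chi\in C_c^\infty(\R^d)$, $0\le\chi\le 1$, $\chi\equiv 1$ on $B_1$, and letting $R\to\infty$ (so that $|\na\chi_R|\le C$ and $\operatorname{supp}\na\chi_R\subset\{|x|\ge R\}$) shows $\int_{\R^d}\diver F_i\,dx=0$. $(ii)$ For every $v\in L^2(\R^d)$ one has $\int_{\R^d}g_\rho[v]\,dx=0$, because $\pi^{-d/2}\int_{\R^d}e^{-|x|^2}dx=1$, and $\|g_\rho[u_i(t)]\|_1\le 2\|u_i(t)\|_2^2$ is bounded on $[0,T^*]$ by \eqref{3.g1}.

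Then I would use that $S_i(t)=e^{t\mathcal{L}_i}$ acts as convolution with a probability density (the convolution of a Gaussian with a symmetric $2\alpha$-stable density), so $S_i(t)$ is a contraction on $L^1(\R^d)$ and $\int_{\R^d}S_i(t)v\,dx=\int_{\R^d}v\,dx$ for $v\in L^1(\R^d)$. By uniqueness for the linear problem \eqref{3.lin} and the representation of its solution through $S_i$, the weak solution satisfies
$$
  u_i(t)=S_i(t)u_i^0+\int_0^t S_i(t-s)\big(\diver F_i(s)-\kappa g_\rho[u_i(s)]\big)\,ds ,
$$
whose right-hand side lies in $L^1(\R^d)$ since $u_i^0\in L^1(\R^d)$ by (H3) and, by $(i)$--$(ii)$, the source term is in $L^1(\R^d)$ with integrable-in-time norm; in particular $u_i(t)\in L^1(\R^d)$. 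Integrating over $\R^d$ and using the mass-preservation of $S_i$ together with $\int_{\R^d}\diver F_i\,dx=0$ and $\int_{\R^d}g_\rho[u_i]\,dx=0$ gives $\int_{\R^d}u_i(t)\,dx=\int_{\R^d}u_i^0\,dx$ for all $t\in[0,T^*]$.

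The main obstacle is conceptual rather than computational: conservation of mass already presupposes $u_i(t)\in L^1(\R^d)$, which is not part of the weak-solution class, while $(-\Delta)^\alpha\chi_R$ is not compactly supported and decays only like $|x|^{-d-2\alpha}$, so a bare cut-off evaluation of $\int_{\R^d}(-\Delta)^\alpha u_i\,dx$ is unavailable without an a priori $L^1$ bound in high dimension; the Duhamel route removes both difficulties at once. The only genuine computation is the $L^1$ control of $F_i$ and $\diver F_i$, which is exactly where the smoothness and compact support of the regularized kernel $\K_{(1-\beta)/2}^{(\eps)}$ enter. If one prefers to avoid semigroup language, the same conclusion follows by first extracting $u_i(t)\in L^1(\R^d)$ from the Duhamel formula and then testing \eqref{3.eq} with $\chi_R$ and passing to the limit: the flux and $g_\rho$ terms vanish by dominated convergence, and the $\kappa\Delta$ and $\sigma_i(-\Delta)^\alpha$ terms vanish once $u_i\in L^1(\R^d)$ is known.
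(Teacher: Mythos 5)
Your argument is correct, but it takes a genuinely different route from the paper. The paper tests the weak formulation directly with the non-compactly-supported cutoff $\psi_R(x)=(1+|x/R|^2)^{-\gamma/2}$, $\gamma>d$, which lies in $H^1(\R^d)$; the heart of that proof is the pointwise estimate $-(-\Delta)^\alpha\psi_R\le CR^{-2\alpha}\psi_R$ together with $\|(-\Delta)^\alpha\psi_R\|_\infty\to0$ (proved by the same splitting as for \eqref{2.xm}), after which Gronwall gives a uniform bound on $\int u_i\psi_R\,dx$, monotone convergence yields $u_i(t)\in L^1(\R^d)$, and a second passage to the limit $R\to\infty$ in the tested identity gives exact conservation. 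You instead absorb both linear operators $\kappa\Delta-\sigma_i(-\Delta)^\alpha$ into a mass-preserving $L^1$-contraction semigroup (convolution of the Gaussian with a symmetric $2\alpha$-stable density) and reduce everything to the $L^1$ control of $\diver F_i$ and $g_\rho[u_i]$, both of which integrate to zero; this simultaneously delivers $u_i(t)\in L^1$ and the conservation identity, and it neatly sidesteps your correctly identified obstruction that $(-\Delta)^\alpha$ applied to a compactly supported cutoff has only algebraic decay. The trade-offs: your route requires identifying the variational solution with the mild solution of the linear problem \eqref{3.lin} (standard, but it is an extra step and relies on the probabilistic structure of the semigroup), whereas the paper's cutoff machinery is needed anyway -- the same estimates on $\psi_R$ and on $(-\Delta)^\alpha$ of polynomial weights are reused verbatim for the weighted test functions $(1+|x|^2)^{m/2}\psi_R$ in the moment bound \eqref{4.moment} and in Lemma \ref{lem.est2}, so the paper gets mass conservation essentially for free from tools it must develop regardless. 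Your computation of $\int_{\R^d}\diver F_i\,dx=0$ via the compactly supported $\chi_R$ is exactly where the regularization \eqref{2.defK} enters, and it is sound.
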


\begin{proof}
Let $R\ge 1$, $\gamma>d$ and introduce the
cutoff function $\psi_R:\R^d\to[0,\infty)$ by
$$
  \psi_R(x) = \psi_1(x/R), \quad \psi_1(x) = (1+|x|^2)^{-\gamma/2}
	\quad\mbox{for }x\in\R^d.
$$
The following estimates hold:
\begin{equation}\label{3.psi1}
  |\na\psi_R(x)|\le CR^{-1}\psi_R(x), \quad |\Delta\psi_R(x)|\le CR^{-2}\psi_R(x)
	\quad\mbox{for }x\in\R^d.
\end{equation}
We claim that
\begin{equation}\label{3.psiR}
  -(-\Delta)^\alpha\psi_R(x)\le CR^{-2\alpha}\psi_R(x)\mbox{ for }x\in\R^d, \quad
	\lim_{R\to\infty}\|(-\Delta)^\alpha\psi_R\|_\infty = 0.
\end{equation}
It is sufficient to prove the first statement for $R=1$, 
thanks to a scaling argument, while
the proof for $R=1$ is similar to that one for \eqref{2.xm}.
The second statement in \eqref{3.psiR} follows from
$(-\Delta)^\alpha\psi_R(x)=R^{-2\alpha}((-\Delta)^\alpha\psi_1)(x/R)$ and
the property $(-\Delta)^\alpha\psi_1\in L^\infty(\R^d)$.

Since $\psi_R\in H^1(\R^d)$ for $\gamma>d$, we can use $\psi_R$ as a test function in
the weak formulation of \eqref{3.eq}:
\begin{align}\label{3.aux}
  \int_{\R^d}&u_i(t)\psi_R dx - \int_{\R^d}u_i^0\psi_R dx
	= \kappa\int_0^t\int_{\R^d}u_i\Delta\psi_R dxds
	- \kappa\int_0^t\int_{\R^d}g_\rho[u_i]\psi_R dxds \\
	&{}- \sigma_i\int_0^t\int_{\R^d}u_i(-\Delta)^\alpha\psi_R dxds
	- \sum_{j=1}^n a_{ij}\int_0^t\int_{\R^d}u_i\na\psi_R
	\cdot\na\K_{(1-\beta)/2}^{(\eps)}*u_j dxds. \nonumber
\end{align}
We deduce from \eqref{3.psi1} that
\begin{align*}
  \int_{\R^d}&u_i(t)\psi_R dx - \int_{\R^d}u_i^0\psi_R dx
	\le 2\kappa\int_0^t\|u_i\|_2^2 ds + CR^{-2}\int_0^t\int_{\R^d}u_i\psi_R dxds \\
	&\phantom{xx}{}+ CR^{-1}\sum_{j=1}^n\int_0^t\|u_i\|_2
	\|\na\K_{(1-\beta)/2}^{(\eps)}*u_j\|_2ds \\
	&\le 2\kappa\int_0^t\|u_i\|_2^2 ds + C\int_0^t\int_{\R^d}u_i\psi_R dxds
	+ C\sum_{j=1}^n\int_0^t\|u_i\|_2\|u_j\|_2ds \\
	&\le C\sum_{j=1}^n\int_0^t\|u_j\|_2^2 ds + C\int_0^t\int_{\R^d}u_i\psi_R dxds.
\end{align*}
Summing this inequality over $i=1,\ldots,n$, observing that
$u_i\in C^0([0,T^*];L^2(\R^d))$, and applying Gronwall's lemma
shows that
$$
  \sup_{0<t<T^*}\int_{\R^d}u_i(t)\psi_R dx \le C(T^*).
$$
The monotone convergence theorem allows us to perform the limit $R\to\infty$
leading to
$$
  \sup_{0<t<T^*}\int_{\R^d}u_i(t) dx \le C(T^*).
$$
At this point, because of \eqref{3.psi1}, \eqref{3.psiR}, and the fact that
$\int_{\R^d}g_\rho[u_i]dx=0$, the limit $R\to\infty$ in \eqref{3.aux} gives
the conservation of mass:
$$
  \int_{\R^d}u_i(t)dx - \int_{\R^d}u_i^0 dx = 0\quad\mbox{for }t\in[0,T^*],
$$
finishing the proof.
\end{proof}

The next step is the proof of a bound for $u_i$ in $C^0([0,T^*];L^2(\R^d))$,
which allows us to extend the local solution globally.

\begin{lemma}[$L^2(\R^d)$ estimate]\label{lem.L2}
Let $u=u^{(\eps,\kappa,\rho)}$ be a weak solution to \eqref{3.eq}--\eqref{3.ic}
on $[0,T^*]$. Then 
$$
  \|u_i\|_{L^\infty(0,T^*;L^2(\R^d))} 
	+ \sqrt{\kappa}\|\na u_i\|_{L^2(0,T^*;L^2(\R^d))}
	\le C(\eps,T^*).
$$
\end{lemma}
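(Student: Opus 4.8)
The plan is to derive the $L^2(\R^d)$ estimate by testing the approximate equation \eqref{3.eq} with $u_i$ itself, exactly as was already done to show that the fixed-point map $F$ is well defined, but now exploiting the sign of the nonlinear terms rather than bounding everything crudely. First I would use $u_i$ as test function in the weak formulation of \eqref{3.eq}, which is legitimate since $u_i\in L^2(0,T^*;H^1(\R^d))$ with $\pa_t u_i\in L^2(0,T^*;H^{-1}(\R^d))$. The diffusive terms give
\begin{align*}
  \frac12\|u_i(t)\|_2^2 + \kappa\int_0^t\|\na u_i\|_2^2\,ds
	+ \sigma_i\int_0^t\|(-\Delta)^{\alpha/2}u_i\|_2^2\,ds,
\end{align*}
on the left-hand side (after moving $\frac12\|u_i^0\|_2^2$ to the right), using the Parseval-type identity for the fractional Laplacian $\int_{\R^d}u_i(-\Delta)^\alpha u_i\,dx=\|(-\Delta)^{\alpha/2}u_i\|_2^2\ge 0$, so these terms only help.

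Next I would treat the reaction term $-\kappa\int_0^t\int_{\R^d}g_\rho[u_i]u_i\,dxds$. Since $u_i\ge 0$ (established just before this lemma), $g_\rho[u_i]u_i = u_i^2(W_\rho*u_i) - (\text{const})\,u_i e^{-|x|^2}$, and the first part is nonnegative, so it can be dropped; the second part is bounded, using Young's convolution inequality and Cauchy--Schwarz, by $C(\rho)\int_0^t\|u_i\|_2^2\,ds$ after also invoking the mass conservation from Lemma~\ref{lem.mass} to control $\int u_i(W_\rho*u_i)e^{-|x|^2}$. For the cross-diffusion term, after integrating by parts I get $\sum_j a_{ij}\int_0^t\int_{\R^d}(u_i)_+(\na\K_{(1-\beta)/2}^{(\eps)}*u_j)\cdot\na u_i\,dxds$; here I use that the regularized kernel $\K_{(1-\beta)/2}^{(\eps)}\in C_0^2(\R^d)$, so $\na\K_{(1-\beta)/2}^{(\eps)}*u_j$ is controlled in $L^\infty$ by $\|\na\K_{(1-\beta)/2}^{(\eps)}\|_2\|u_j\|_2$ via Young, giving a bound $C(\eps)\sum_j\int_0^t\|u_j\|_2\|u_i\|_2\|\na u_i\|_2\,ds$. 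I then absorb the $\|\na u_i\|_2$ factor into the good $\kappa\|\na u_i\|_2^2$ term by Young's inequality $ab\le \frac{\kappa}{2}a^2 + \frac{1}{2\kappa}b^2$, at the cost of a constant $C(\eps,\kappa)$ and a term $\sum_j\int_0^t\|u_j\|_2^2\|u_i\|_2^2\,ds$.

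Summing over $i=1,\ldots,n$ and writing $y(t):=\sum_i\|u_i(t)\|_2^2$, I arrive at a differential inequality of the form $y(t)\le y(0) + C(\eps,\kappa,\rho)\int_0^t (1+y(s))\,y(s)\,ds$. This is where I expect the main technical point: the nonlinearity of the Gronwall argument. A priori a quadratic Gronwall inequality only yields a bound on a short time interval, which is not enough for global extension. The resolution is that, on the finite interval $[0,T^*]$ where the solution already exists and lies in $C^0([0,T^*];L^2(\R^d))$, $y$ is already finite and continuous, so one can either run Gronwall's lemma directly on $[0,T^*]$ treating $C(1+y(s))$ as an $L^1(0,T^*)$ coefficient (since $y\in C^0([0,T^*])\subset L^\infty(0,T^*)$), obtaining $y(t)\le y(0)e^{C(\eps,T^*)t}$ with a constant depending only on $\eps$, $\kappa$, $\rho$, the initial mass, and $T^*$ but crucially \emph{not} on $T^*$ through $\sup y$ in a circular way; alternatively, one observes that the resulting bound $\|u_i\|_{L^\infty(0,T^*;L^2)}\le C(\eps,T^*)$ is uniform on bounded time intervals and hence, combined with the short-time existence from Banach's fixed point theorem (whose existence time depends only on $\|u^0\|_2$, $R$, and the parameters), permits the standard continuation argument: the solution cannot blow up in $L^2$ in finite time, so $T^*=\infty$. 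Keeping careful track that the final constant is of the form $C(\eps,T^*)$, and independent of $\kappa$ and $\rho$ in the way the subsequent de-regularization limits require, is the bookkeeping one must not botch; everything else is a routine repetition of the estimates already carried out for the contraction mapping $F$.
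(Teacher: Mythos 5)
Your overall strategy (test with $u_i$, exploit nonnegativity and mass conservation) is the right one, but your treatment of the cross-diffusion term contains a genuine gap that the paper's proof is specifically designed to avoid. You estimate
$$
\bigg|\sum_{j=1}^n a_{ij}\int_0^t\int_{\R^d}u_i\,\big(\na\K_{(1-\beta)/2}^{(\eps)}*u_j\big)\cdot\na u_i\,dx\,ds\bigg|
\le C(\eps)\sum_{j=1}^n\int_0^t\|u_j\|_2\,\|u_i\|_2\,\|\na u_i\|_2\,ds
$$
and absorb $\|\na u_i\|_2$ into $\kappa\int_0^t\|\na u_i\|_2^2\,ds$. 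This has two consequences. First, Young's inequality produces a factor $1/\kappa$, so your final constant depends on $\kappa$, whereas the lemma asserts $C(\eps,T^*)$ independent of $\kappa$ and $\rho$; this independence is used later (e.g.\ in Lemma \ref{lem.est} and in the limit $\rho\to0$). Second, and more seriously, you are left with $y(t)\le y(0)+C\int_0^t(1+y(s))\,y(s)\,ds$, a quadratic Gronwall inequality. Your proposed resolution --- treating $C(1+y(s))$ as an $L^1(0,T^*)$ coefficient because $y$ is continuous on $[0,T^*]$ --- is circular: the resulting bound $y(t)\le y(0)\exp\big(C\int_0^t(1+y(s))\,ds\big)$ depends on $\sup_{[0,T^*]}y$, i.e.\ on the very quantity being estimated. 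Such a bound is not an a priori bound in terms of the data and cannot rule out blow-up of $\|u_i(t)\|_2$ as $t$ approaches the maximal existence time; the comparison ODE $y'=Cy^2$ does blow up in finite time, so no bookkeeping rescues the continuation argument from this estimate alone.

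The paper avoids the quadratic term by integrating by parts once more: writing $u_i\na u_i=\frac12\na(u_i^2)$ and moving the derivative onto the regularized kernel gives
$$
-\sum_{j=1}^n a_{ij}\int_{\R^d}u_i\na u_i\cdot\na\K_{(1-\beta)/2}^{(\eps)}*u_j\,dx
=\frac12\sum_{j=1}^n a_{ij}\int_{\R^d}u_i^2\,\Delta\K_{(1-\beta)/2}^{(\eps)}*u_j\,dx,
$$
and since $\K_{(1-\beta)/2}^{(\eps)}$ is the convolution square of a $C_0^2$ function, one has $\|\Delta\K_{(1-\beta)/2}^{(\eps)}*u_j\|_\infty\le C(\eps)\|u_j\|_1=C(\eps)\|u_j^0\|_1$ by mass conservation (Lemma \ref{lem.mass}). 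The cross term is then bounded by $C(\eps)\int_0^t\|u_i\|_2^2\,ds$ --- linear in $y$ --- and the standard linear Gronwall lemma yields $C(\eps,T^*)$ with no dependence on $\kappa$, $\rho$, or the solution itself. Your treatment of the diffusive and $g_\rho$ terms is fine; note only that the constant in the $g_\rho$ bound can be taken independent of $\rho$ since $\|W_\rho\|_1=1$.
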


\begin{proof}
We use $u_i$ as a test function in \eqref{3.eq} and estimate
in a similar way as before:
\begin{align*}
  \frac12\|&u_i\|_2^2 - \frac12\|u_i^0\|_2^2
	+ \kappa\int_0^t\|\na u_i\|_2^2 ds 
	+ \sigma_i\int_0^t\|(-\Delta)^{\alpha/2}u_i\|_2^2ds \\
	&= -\kappa\int_0^t\int_{\R^d}g_\rho[u_i]u_i dxds
	- \sum_{j=1}^n a_{ij}\int_0^t\int_{\R^d}u_i\na u_i
	\cdot\na\K_{(1-\beta)/2}^{(\eps)}*u_j dxds \\
	&\le -\kappa\int_0^t\int_{\R^d}g_\rho[u_i]u_i dxds
	+ \frac12\sum_{j=1}^n a_{ij}\int_0^t\int_{\R^d}u_i^2\Delta\K_{(1-\beta)/2}^{(\eps)}
	*u_j dxds \\
	&\le C(\eps)\sum_{j=1}^n\|u_j\|_{L^\infty(0,T;L^1(\R^d))}
	\int_0^t\|u_i\|_2^2 ds.
\end{align*}
Then mass conservation and Gronwall's lemma yield the conclusion.
\end{proof}

We deduce from Lemma \ref{lem.L2} that the solution $u$ to \eqref{3.eq}--\eqref{3.ic}
exists for all $t\ge 0$.


\section{Limit in the approximate problem}\label{sec.limit}

We first derive some estimates uniform in $(\eps,\kappa,\rho)$ and perform then
the limits $\rho\to 0$, $\eps\to 0$, and $\kappa\to 0$ in this order.

\subsection{Uniform estimates}

A uniform bound for a moment of $u_i=u_i^{(\eps,\kappa,\rho)}$ can be derived
in a similar way as in Lemma \ref{lem.moment}. 
To make the proof rigorous, we may proceed as in the
proof of the conservation of mass in Section \ref{sec.global} by testing
\eqref{3.eq} with $(1+|\cdot|^2)^{m/2}\psi_R$. This leads to the estimate
\begin{equation}\label{4.moment}
  \sup_{0<t<T}\int_{\R^d}(1+|x|^{2})^{m/2}u_i(t)dx \le C(\eps,u^0,T), \quad\mbox{where }
	0<m<\min\{1,2\alpha\}.
\end{equation}

The following lemma states the entropy inequality for the approximate problem.

\begin{lemma}[Entropy inequality for the approximate problem]\label{lem.ei}
Let $u=u^{(\eps,\kappa,\rho)}$ be a weak solution to \eqref{3.eq}--\eqref{3.ic}.
Then there exists a constant $C>0$ that is independent of $(\eps,\kappa,\rho)$
such that for $t>0$,
\begin{align}\label{4.ei}
  \sum_{i=1}^n&\pi_i\int_{\R^d}u_i(t)\log u_i(t)dx
	+ 4\kappa\sum_{i=1}^n\pi_i\int_0^t\int_{\R^d}|\na\sqrt{u_i}|^2 dxds \\
	&\phantom{xx}{}
	+ C\sum_{i=1}^n\sigma_i\int_0^t\int_{\R^d}|(-\Delta)^{\alpha/2}\sqrt{u_i}|^2dxds
	+ \lambda\sum_{i=1}^n\int_0^t\int_{\R^d}|\na\widetilde{\K}_{(1-\beta)/4}^{(\eps)}
	*u_i|^2 dxds \nonumber \\
	&\phantom{xx}{}
	+ \kappa\sum_{i=1}^n\pi_i\int_0^t\int_{\R^d}u_i(\log u_i)_+W_\rho*u_i dxds 
	\nonumber \\
	&\le \sum_{i=1}^n\pi_i\int_{\R^d}u_i^0\log u_i^0 dx + \kappa Ct
	+ \kappa C\int_0^t\int_{\R^d}u_i^2dxds, \nonumber 
\end{align}
recalling that $\lambda>0$ is the smallest eigenvalue of 
$(\pi_ia_{ij})\in\R^{n\times n}$.
\end{lemma}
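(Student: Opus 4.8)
The plan is to mimic the formal computation of Lemma~2.1, but now applied to the regularized equation \eqref{3.eq}, carefully tracking the extra terms produced by the Laplacian, the $g_\rho$ nonlinearity, and the regularized kernel $\K^{(\eps)}_{(1-\beta)/2}$. First I would test \eqref{3.eq} with $\pi_i\log u_i$. Since we have already established (Lemmas~\ref{lem.mass}--\ref{lem.L2}) that $u_i\ge 0$, $u_i\in C^0([0,T^*];L^2(\R^d))$ with $\na u_i\in L^2(0,T^*;L^2(\R^d))$, the weak solution is regular enough that this test function is admissible after the usual regularization $\log(u_i+\delta)$ and passage $\delta\to 0$ (monotone/dominated convergence, using mass conservation to control $\int u_i\log\delta$ contributions); I would note this but not belabor it. The time-derivative term gives $\frac{d}{dt}\sum_i\pi_i\int_{\R^d}u_i\log u_i\,dx$.

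Next I would handle each term on the right-hand side. The diffusion term $-\kappa\Delta u_i$ contributes, after integration by parts, $\kappa\int_{\R^d}\na u_i\cdot\na\log u_i\,dx = 4\kappa\int_{\R^d}|\na\sqrt{u_i}|^2\,dx$, which is the second term in \eqref{4.ei}. The fractional self-diffusion term $\sigma_i(-\Delta)^\alpha u_i$ contributes $\sigma_i\int_{\R^d}\log u_i\,(-\Delta)^\alpha u_i\,dx$, which by the generalized Stroock--Varopoulos inequality (Lemma~\ref{lem.GSVI}) is bounded below by $C\sigma_i\int_{\R^d}|(-\Delta)^{\alpha/2}\sqrt{u_i}|^2\,dx$, giving the third term. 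For the cross-diffusion term, integration by parts against $\pi_i\na\log u_i = \pi_i\na u_i/u_i$ together with $u_i=(u_i)_+$ cancels the $u_i$ in the denominator and produces exactly $\sum_{i,j}\pi_i a_{ij}\int_{\R^d}\na u_i\cdot\na\K^{(\eps)}_{(1-\beta)/2}*u_j\,dx$. Here the key point is that since $\K^{(\eps)}_{(1-\beta)/2}=\widetilde\K^{(\eps)}_{(1-\beta)/4}*\widetilde\K^{(\eps)}_{(1-\beta)/4}$ is a "convolution square," this bilinear form can be rewritten as $\sum_{i,j}\pi_i a_{ij}\int_{\R^d}(\widetilde\K^{(\eps)}_{(1-\beta)/4}*\na u_i)\cdot(\widetilde\K^{(\eps)}_{(1-\beta)/4}*\na u_j)\,dx$, and then the positive definiteness of $(\pi_i a_{ij})$ with smallest eigenvalue $\lambda$ gives the lower bound $\lambda\sum_i\int_{\R^d}|\na\widetilde\K^{(\eps)}_{(1-\beta)/4}*u_i|^2\,dx$, the fourth term in \eqref{4.ei}. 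This is the step I expect to be the main obstacle: one must make the integration by parts and the "square root of the kernel" manipulation rigorous at the level of the approximate (not smooth) solution, using that $\widetilde\K^{(\eps)}_{(1-\beta)/4}\in C_0^2(\R^d)$ so all convolutions are smooth and all manipulations are legitimate, and one must be careful that $\K^{(\eps)}$ need not be the symbol of a genuine fractional power, so the argument is purely the Fourier/quadratic-form positivity coming from the convolution-square structure and (H2).

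Finally I would treat the $g_\rho$ term: $-\kappa\pi_i\int_{\R^d}g_\rho[u_i]\log u_i\,dx$. Writing $g_\rho[u_i] = u_i(W_\rho*u_i) - \frac{e^{-|x|^2}}{\pi^{d/2}}\int_{\R^d}u_i(W_\rho*u_i)\,dy$ and splitting $\log u_i = (\log u_i)_+ - (\log u_i)_-$, the main term gives $-\kappa\pi_i\int_{\R^d}u_i(W_\rho*u_i)(\log u_i)_+\,dx + \kappa\pi_i\int_{\R^d}u_i(W_\rho*u_i)(\log u_i)_-\,dx$; the first of these, moved to the left-hand side, is precisely the fifth term in \eqref{4.ei}, while the second is nonnegative on the right but must instead be absorbed into the error: since $0\le z(\log z)_-\le C$ for $z\ge 0$ (in fact $\le 1/e$) one bounds $u_i(\log u_i)_-\le C$ pointwise, so $\kappa\pi_i\int u_i(W_\rho*u_i)(\log u_i)_-\,dx$ — wait, this has an extra $W_\rho*u_i$ factor; instead I bound $\kappa\pi_i\int_{\R^d}|u_i(W_\rho*u_i)\log u_i|\,dx$ crudely using $|\log u_i|\le C(u_i + u_i^{-1/2})$ type estimates, or more simply note $u_i(W_\rho*u_i)(\log u_i)_-\le C\,W_\rho*u_i$ and $\int W_\rho*u_i\,dx=\|u_i\|_1$ is conserved, contributing a $\kappa C t$ term; the subtracted (Gaussian) piece is controlled similarly using $\int_{\R^d}u_i(W_\rho*u_i)\,dx\le\|u_i\|_2^2$ and $\int_{\R^d}\frac{e^{-|x|^2}}{\pi^{d/2}}|\log u_i|\,dx\le C(1+\|u_i\|_2^2)$ (the Gaussian controls the $(\log u_i)_+\lesssim u_i$ part in $L^1$ via Cauchy--Schwarz, and near-zero values of $u_i$ give at most a constant since $e^{-|x|^2}|\log u_i|\,\mathbf 1_{\{u_i<1\}}$ is integrable only if one is careful — here one uses $e^{-|x|^2}(\log u_i)_-\le e^{-|x|^2}(1+u_i)$ fails for tiny $u_i$, so instead use $(\log u_i)_-\le u_i^{-1/2}$ giving $\int e^{-|x|^2}u_i^{-1/2}$, which is not obviously finite). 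To avoid this last subtlety I would instead keep the elementary bound $|g_\rho[u_i]|\le u_i(W_\rho*u_i) + \frac{e^{-|x|^2}}{\pi^{d/2}}\|u_i\|_2^2$ and pair it only with $(\log u_i)_+\le u_i$ and $u_i|\log u_i|\le C(1+u_i^2)$, so that all the $g_\rho$ contributions not already placed on the left-hand side are dominated by $\kappa C\int_0^t\int_{\R^d}u_i^2\,dx\,ds + \kappa C t$, which matches exactly the right-hand side of \eqref{4.ei}. Collecting everything and integrating in time from $0$ to $t$ (legitimate since $t\mapsto\int u_i\log u_i$ is absolutely continuous, the time-derivative identity holding in $L^1(0,T)$) yields \eqref{4.ei}. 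The constant $C$ is manifestly independent of $(\eps,\kappa,\rho)$: the Stroock--Varopoulos constant depends only on $d,\alpha$, the eigenvalue $\lambda$ depends only on $A$ and $(\pi_i)$, and all the error constants are absolute, with the $\eps$-dependence of the fourth term being favourable since $\widetilde\K^{(\eps)}_{(1-\beta)/4}\le\K_{(1-\beta)/4}$ monotonically.
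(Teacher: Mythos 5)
Your overall strategy coincides with the paper's: test with a regularized logarithm, use the generalized Stroock--Varopoulos inequality for the fractional term, exploit the convolution-square structure $\K^{(\eps)}_{(1-\beta)/2}=\widetilde\K^{(\eps)}_{(1-\beta)/4}*\widetilde\K^{(\eps)}_{(1-\beta)/4}$ together with the positive definiteness of $(\pi_ia_{ij})$ for the cross-diffusion term, and split the $g_\rho$ contribution into the part that goes to the left-hand side and error terms. Two points, however, are left genuinely unresolved. First, you acknowledge the regularization $\log(u_i+\eta)$ but dismiss the passage $\eta\to0$ as routine. It is not entirely: with the test function $\log(u_i+\eta)$ the cross-diffusion integral carries the factor $u_i/(u_i+\eta)$ rather than $1$, and the resulting error term $\sum_{i,j}\pi_ia_{ij}\int\frac{\eta}{u_i+\eta}\na u_i\cdot\na\K^{(\eps)}_{(1-\beta)/2}*u_j$ must be shown to vanish; the paper does this by integrating by parts onto $\Delta\K^{(\eps)}_{(1-\beta)/2}*u_j$ (bounded since the kernel is $C^2_0$) and applying dominated convergence with the majorant $\eta(\log(u_i+\eta)-\log\eta)\le u_i$. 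This step is needed precisely because the convolution-square rearrangement applies to $\na u_i$, not to $\frac{u_i}{u_i+\eta}\na u_i$.

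Second, your treatment of the Gaussian part of $g_\rho$ does not close. You correctly observe that $\int_{\R^d}e^{-|x|^2}(\log u_i)_-\,dx$ is not obviously finite, but your proposed remedy --- bounding $|g_\rho[u_i]|\,|\log u_i|$ and pairing with $(\log u_i)_+\le u_i$ --- still requires controlling exactly that negative-log integral, since the absolute value reintroduces $(\log u_i)_-$ against the Gaussian. The repair is the sign observation: the term in question is $+\frac{e^{-|x|^2}}{\pi^{d/2}}\big(\int_{\R^d}u_iW_\rho*u_i\,dy\big)\log u_i$ with a \emph{nonnegative} prefactor, so only an upper bound on $\log u_i$ is needed, and $\log u_i\le(\log u_i)_+\le u_i$ together with mass conservation gives the $\kappa C\int_0^t\|u_i\|_2^2\,ds$ contribution (this is the paper's estimate of $I_{72}$). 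With these two repairs your argument matches the paper's proof; the rest (the pointwise bound $u_i(\log u_i)_-W_\rho*u_i\le CW_\rho*u_i$, the $\eps$-, $\kappa$-, $\rho$-independence of the constants) is fine.
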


\begin{proof}
The usual idea to derive the entropy estimate is to use 
$\pi_i\log u_i$ as a test function
in the weak formulation of \eqref{3.eq}. Since this function is not an element of
$L^2(0,T;H^1(\R^d))$, we need to regularize. 
Instead, we use $\pi_i(\log(u_i+\eta)-\log\eta)
\in L^2(0,T;H^1(\R^d))$ with $0<\eta<1$ as a test function. Thanks to mass
conservation, we have
\begin{align*}
  \langle\pa_t u_i,\log(u_i+\eta)-\log\eta\rangle
	&= \frac{d}{dt}\int_{\R^d}\big((u_i+\eta)\log(u_i+\eta) - \eta\log\eta
	- (1+\log\eta)u_i\big)dx \\
	&= \frac{d}{dt}\int_{\R^d}\big((u_i+\eta)\log(u_i+\eta) - \eta\log\eta\big)dx.
\end{align*}
Setting $H_\eta[u] = \sum_{i=1}^n\pi_i\int_{\R^d}
((u_i+\eta)\log(u_i+\eta)-\eta\log\eta)dx$, we infer from the weak formulation
of \eqref{3.eq}, after summing over $i=1,\ldots,n$, that
\begin{align*}
  & H_\eta[u(t)] - H_\eta[u^0]
	+ 4\kappa\sum_{i=1}^n\pi_i\int_0^t\int_{\R^d}|\na\sqrt{u_i+\eta}|^2dxds
	=: I_6 + I_7 + I_8, \quad\mbox{where} \\
	& I_6 = -\sum_{i=1}^n\sigma_i\pi_i\int_0^t\int_{\R^d}\log(u_i+\eta)(-\Delta)^\alpha
	u_i dxds, \\
	& I_7 = -\kappa\sum_{i=1}^n\pi_i\int_0^t\int_{\R^d}g_\rho[u_i]\log(u_i+\eta)dxds, \\
	& I_8 = -\sum_{i,j=1}^n\pi_ia_{ij}\int_0^t\int_{\R^d}\frac{u_i}{u_i+\eta}
	\na u_i\cdot\na\K_{(1-\beta)/2}^{(\eps)}*u_j dxds.
\end{align*}
We use the generalized Stroock--Varopoulos inequality (Lemma \ref{lem.GSVI})
to estimate $I_6$:
$$
  I_6 \le -C\sum_{i=1}^n\int_0^t\int_{\R^d}|(-\Delta)^{\alpha/2}\sqrt{u_i+\eta}|^2 dxds.
$$
The definition of $g_\rho[u_i]$ yields
\begin{align*}
  I_7 &= -\kappa\sum_{i=1}^n\pi_i\int_0^t\int_{\R^d}u_i\log(u_i+\eta)W_\rho*u_i dxds \\
	&\phantom{xx}{}+ \kappa\sum_{i=1}^n\pi_i\int_0^t\bigg(\int_{\R^d}\log(u_i+\eta)
	\frac{e^{-|x|^2}}{\pi^{d/2}}dx\bigg)\bigg(\int_{\R^d}u_iW_\rho*u_i dy\bigg)ds \\
	&=: I_{71} + I_{72}.
\end{align*}
Since the logarithm is increasing, we find that
\begin{align*}
  I_{71} &\le -\kappa\sum_{i=1}^n\pi_i\int_0^t\int_{\R^d}u_i\log u_i W_\rho*u_i dxds \\
	&\le -\kappa\sum_{i=1}^n\pi_i\int_0^t\int_{\R^d}u_i(\log u_i)_+ W_\rho*u_i dxds
	+ \kappa\sum_{i=1}^n\pi_i\int_0^t\|u_i(\log u_i)_-\|_2\|u_i\|_2 ds \\
	&\le -\kappa\sum_{i=1}^n\pi_i\int_0^t\int_{\R^d}u_i(\log u_i)_+ W_\rho*u_i dxds
	+ \kappa C\sum_{i=1}^n\int_0^t\|u_i\|_{1}^{1/2}\|u_i\|_2 ds,
\end{align*}
where we used the inequality $u_i^2(\log u_i)_-^{2}\le u_i$ in the last step.
The inequality $\log(u_i+\eta)\le C(1+u_i)$ and mass conservation imply that
$$
  I_{72} \le \kappa \sum_{i=1}^n\int_0^t\bigg(C + C\int_{\R^d}u_i
	\frac{e^{-|x|^2}}{\pi^{d/2}}dx\bigg)\bigg(\int_{\R^d}u_iW_\rho*u_i dy\bigg)ds
  \le \kappa C\int_0^t\|u_i\|_2^2ds.
$$
We infer that
$$
  I_7 \le \kappa Ct + \kappa C\int_0^t\|u_i\|_2^2ds
	-\kappa\sum_{i=1}^n\pi_i\int_0^t\int_{\R^d}u_i(\log u_i)_+ W_\rho*u_i dxds.
$$
Finally, by the definition of $\K_{(1-\beta)/2}^{(\eps)}$, the positive definiteness
of the matrix $(\pi_ia_{ij})$, and integration by parts,
\begin{align*}
  I_8 &= -\sum_{i,j=1}^n\pi_i a_{ij}\int_0^t\int_{\R^d}
	(\na\widetilde{\K}_{(1-\beta)/4}^{(\eps)}*u_i)
	\cdot(\na\widetilde{\K}_{(1-\beta)/4}^{(\eps)}*u_j)dxds \\
	&\phantom{xx}{}+ \sum_{i,j=1}^n\pi_i a_{ij}\int_0^t\int_{\R^d}
	\frac{\eta}{u_i+\eta}\na u_i\cdot\na\K_{(1-\beta)/2}^{(\eps)}*u_jdxds \\
	&\le -\lambda\sum_{i=1}^n\int_0^t\int_{\R^d}
	|\na\widetilde{\K}_{(1-\beta)/4}^{(\eps)}*u_i|^2 dxds + I_{81}(\eta), 
\end{align*}
where 
$$
  I_{81}(\eta) = - \sum_{i,j=1}^n\pi_ia_{ij}\int_0^t\int_{\R^d}
	\eta(\log(u_i+\eta)-\log\eta)\Delta\K_{(1-\beta)/2}^{(\eps)}*u_j dxds.
$$
We summarize the previous estimates:
\begin{align}
  H_\eta&[u(t)] - H_\eta[u^0] + 4\kappa\sum_{i=1}^n\pi_i\int_0^t\int_{\R^d}
	|\na\sqrt{u_i+\eta}|^2 dxds \nonumber \\
	&\phantom{xx}{}+ C\sum_{i=1}^n\int_0^t\int_{\R^d}|(-\Delta)^{\alpha/2}
	\sqrt{u_i+\eta}|^2 dxds 
	+ \kappa\sum_{i=1}^n\pi_i\int_0^t\int_{\R^d}u_i(\log u_i)_+W_\rho*u_i dxds 
	\label{4.aux2} \\
	&\phantom{xx}{}+ \lambda\sum_{i=1}^n\int_0^t\int_{\R^d}
	|\na\widetilde{\K}_{(1-\beta)/4}^{(\eps)}*u_i|^2 dxds
	\le \kappa Ct + \kappa C\int_0^t\|u_i\|_2^2 ds + I_{81}(\eta). \nonumber
\end{align}
Before performing the limit $\eta\to 0$, we estimate the
error term $I_{81}(\eta)$:
\begin{align*}
  I_{81}(\eta) &\le C\sum_{i,j=1}^n\|\Delta\K_{(1-\beta)/2}^{(\eps)}*u_j
	\|_{L^\infty(0,T;L^{\infty}(\R^d))}\int_0^t\int_{\R^d}
	\eta(\log(u_i+\eta)-\log\eta)dxds \\
	&\le C(\eps)\sum_{i,j=1}^n\|u_j\|_{L^\infty(0,T;L^1(\R^d))}
	\int_0^t\int_{\R^d}\eta(\log(u_i+\eta)-\log\eta)dxds.
\end{align*}
By mass conservation, the first factor is bounded, while the second one tends to
zero as $\eps\to 0$. Indeed, it holds that $\eta(\log(u_i+\eta)-\log\eta)\to 0$
a.e.\ in $\R^d\times(0,T)$ as $\eta\to 0$ and $0\le \eta(\log(u_i+\eta)-\log\eta)
\le u_i\in L^\infty(0,T;L^1(\R^d))$, and therefore, we can apply the dominated
convergence theorem leading to $I_{81}(\eta)\to 0$ as $\eta\to 0$.

At this point, we can take the limit $\eta\to 0$ in \eqref{4.aux2} by applying
dominated convergence, Fatou's lemma, and the weak lower semicontinuity of the
$L^2(\R^d)$ norm to conclude the proof.
\end{proof}

We deduce from the upper bound for $u_i\log u_i$, mass conservation, and the
moment bound that $u_i\log u_i$ is bounded in $L^1(\R^d)$, as stated in the
following lemma.

\begin{lemma}\label{lem.ulogu}
Let $u=u^{(\eps,\kappa,\rho)}$ be a weak solution to \eqref{3.eq}--\eqref{3.ic}.
Then for any $T>0$,
$$
  \|u_i\log u_i\|_{L^\infty(0,T;L^1(\R^d))} \le C.
$$
\end{lemma}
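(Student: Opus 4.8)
The plan is to estimate the positive and negative parts of $u_i\log u_i$ separately: the positive part will be controlled by the entropy inequality of Lemma~\ref{lem.ei}, and the negative part by the moment bound \eqref{4.moment}. Writing $u_i\log u_i = u_i(\log u_i)_+ - u_i(\log u_i)_-$ with $z_\pm\ge 0$, one has $\|u_i(t)\log u_i(t)\|_1 = \int_{\R^d}u_i(t)(\log u_i(t))_+\,dx + \int_{\R^d}u_i(t)(\log u_i(t))_-\,dx$, so it is enough to bound both integrals uniformly in $t\in(0,T)$.

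For the negative part I would fix $t$ and split $\{u_i(t)<1\}$ into $A=\{e^{-(1+|x|^2)^{m/2}}\le u_i(t)<1\}$ and $B=\{u_i(t)<e^{-(1+|x|^2)^{m/2}}\}$. On $A$ we have $(\log u_i(t))_- = -\log u_i(t)\le (1+|x|^2)^{m/2}$, so that $\int_A u_i(t)(\log u_i(t))_-\,dx\le \int_{\R^d}u_i(t)(1+|x|^2)^{m/2}\,dx\le C$ by \eqref{4.moment}. On $B$, since $m>0$ implies $(1+|x|^2)^{m/2}\ge 1$, we get $u_i(t)<e^{-1}$ a.e.\ on $B$, and since $s\mapsto -s\log s$ is increasing on $(0,e^{-1})$, it follows that $u_i(t)(\log u_i(t))_- \le (1+|x|^2)^{m/2}e^{-(1+|x|^2)^{m/2}}$ on $B$; the right-hand side is integrable over $\R^d$ (the exponential factor dominating the polynomial growth at infinity). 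Summing the two contributions gives $\int_{\R^d}u_i(t)(\log u_i(t))_-\,dx\le C$ uniformly in $t$.

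For the positive part I would invoke Lemma~\ref{lem.ei}: dropping all the nonnegative terms on its left-hand side except the entropy term, and bounding the right-hand side by means of Hypothesis (H3) (which gives $u_i^0\log u_i^0\in L^1(\R^d)$) together with the $L^2$ estimate of Lemma~\ref{lem.L2} (which makes $\kappa C\int_0^t\int_{\R^d}u_i^2\,dxds$ finite), one obtains $\sum_{i=1}^n\pi_i\int_{\R^d}u_i(t)\log u_i(t)\,dx\le C$. Adding the negative-part bound then gives $\sum_{i=1}^n\pi_i\int_{\R^d}u_i(t)(\log u_i(t))_+\,dx = \sum_{i=1}^n\pi_i\int_{\R^d}u_i(t)\log u_i(t)\,dx + \sum_{i=1}^n\pi_i\int_{\R^d}u_i(t)(\log u_i(t))_-\,dx\le C$; since each $\pi_i>0$, every summand is bounded, and combined with the negative-part estimate this yields $\|u_i(t)\log u_i(t)\|_1\le C$ for all $t\in(0,T)$. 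Taking the supremum over $t$ finishes the argument.

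The only genuinely delicate point is the uniform-in-$t$ control of $\int_{\R^d}u_i(\log u_i)_-\,dx$ over the unbounded domain $\R^d$: the pointwise bound $-s\log s\le e^{-1}$ is useless there, and it is precisely to remedy this that the moment hypothesis in (H3) and the moment estimate \eqref{4.moment} are needed. The rest is routine bookkeeping of bounds already established.
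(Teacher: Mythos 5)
Your proof is correct, and its overall architecture matches the paper's: the positive part of $u_i\log u_i$ is controlled by the upper bound on the signed entropy coming from Lemma \ref{lem.ei} (with the right-hand side made finite by (H3) and Lemma \ref{lem.L2}), and the negative part is controlled by the moment bound \eqref{4.moment}. The only genuine difference lies in how the negative part $\int_{\{u_i<1\}}u_i|\log u_i|\,dx$ is estimated. The paper uses the pointwise bound $u_i|\log u_i|\le C(\delta)u_i^{1-\delta}$ followed by H\"older's inequality, which trades the moment $\int u_i(1+|x|^2)^{m/2}dx$ against the integrability of $(1+|x|^2)^{-m(1-\delta)/(2\delta)}$ and forces the constraint $\delta<m/(m+2d)$. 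You instead split $\{u_i<1\}$ according to whether $u_i$ lies above or below $e^{-(1+|x|^2)^{m/2}}$ and use the monotonicity of $s\mapsto -s\log s$ on $(0,e^{-1})$; this is the classical comparison argument from kinetic theory and avoids any exponent bookkeeping, at the cost of checking the (easy) integrability of $(1+|x|^2)^{m/2}e^{-(1+|x|^2)^{m/2}}$. Both routes use exactly the same inputs and yield the same conclusion with a constant depending on $\eps$ and $T$ but not on $\kappa$ or $\rho$, consistent with \eqref{est3}.
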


\begin{proof}
The proof is similar to that one in \cite[Section 2]{GuZa18}. 
In fact, the result holds for
any function $0\le v\in L^\infty(0,T;L^1(\R^d))$ satisfying
$$
  \sup_{0<t<T}\int_{\R^d} v(t)\big(\log v(t)+(1+|x|^2)^{m/2}\big)dx \le C(T),
$$
where $m>0$. We show that $\sup_{0<t<T}\|v(t)\log v(t)\|_1\le C(T)$. For this, we write
\begin{align*}
  \int_{\R^d}|v\log v|dx &= -\int_{\{v<1\}}v\log v dx + \int_{\{v\ge 1\}}v\log v dx \\
	&= -2\int_{\{v<1\}}v\log v dx + \int_{\R^d}v\log vdx
	\le -2\int_{\{v<1\}}v\log v dx + C.
\end{align*}
We use the Cauchy--Schwarz inequality to estimate the integral on the right-hand side:
\begin{align*}
  -\int_{\{v<1\}}&v\log v dx = \int_{\{v<1\}}v^{(1-\delta)/2}v^{(1+\delta)/2}
	\log\frac{1}{v}dx \\
	&\le \bigg(\int_{\{v<1\}}v^{1-\delta}dx\bigg)^{1/2}\bigg(\int_{\{v<1\}}
	v\bigg(v^{\delta/2}\log\frac{1}{v}\bigg)^2 dx\bigg)^{1/2},
\end{align*}
where $\delta\in(0,1)$.
The function $(0,1)\to\R$, $s\mapsto s^{\delta/2}\log(1/s)$, is bounded by a
constant $C(\delta)$. Therefore, taking into account mass conservation for $v$
and the H\"older inequality,
\begin{align*}
  -\int_{\{v<1\}}&v\log v dx \le C(\delta)\bigg(\int_{\{v<1\}}
	v^{1-\delta}dx\bigg)^{1/2} \\
	&= C(\delta)\bigg(\int_{\{v<1\}}(1+|x|^2)^{m(1-\delta)/2}v(x)^{1-\delta}
	(1+|x|^2)^{-m(1-\delta)/2}dx\bigg)^{1/2} \\
	&\le C(\delta)\bigg(\int_{\{v<1\}}(1+|x|^2)^{m/2}v(x)dx\bigg)^{(1-\delta)/2}
	\bigg(\int_{\{v<1\}}(1+|x|^2)^{-m(1-\delta)/(2\delta)}dx\bigg)^{\delta/2}.
\end{align*}
The moment estimate shows that the first integral is bounded, while the second one
is finite if $m(1-\delta)/(2\delta)>d$ or $\delta<m/(m+2d)$. This proves the claim.
\end{proof}

We deduce from the previous lemmas the following estimates.

\begin{lemma}[Uniform estimates]\label{lem.est}
Let $u=u^{(\eps,\kappa,\rho)}$ be a weak solution to \eqref{3.eq}--\eqref{3.ic}.
Then there exist constants $q>1$ and $C(\eps,T)>0$, which is independent of 
$(\kappa,\rho)$, such that for $t>0$,
\begin{align}
  \sqrt{\kappa}\|\sqrt{u_i}\|_{L^2(0,T;H^1(\R^d))}
	+ \sqrt{\sigma_i}\|\sqrt{u_i}\|_{L^2(0,T;H^\alpha(\R^d))} &\le C(\eps,T), 
	\label{est1} \\
	\|\na\widetilde{\K}_{(1-\beta)/4}^{(\eps)}*u_i\|_{L^2(0,T;L^2(\R^d))}
	+ \kappa\|u_i(\log u_i)_+W_\rho*u_i\|_{L^1(0,T;L^1(\R^d))}
	&\le C(\eps,T), \label{est2} \\
	\|u_i\log u_i\|_{L^\infty(0,T;L^1(\R^d))} 
	+ \|\pa_t u_i\|_{L^q(0,T;W^{-1,q}(\R^d))} &\le C(\eps,T). \label{est3}
\end{align}
\end{lemma}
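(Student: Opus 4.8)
The strategy is to harvest the bounds \eqref{est1}, \eqref{est2} and the first half of \eqref{est3} straight from the approximate entropy inequality \eqref{4.ei}, keeping careful track of the fact that all constants there depend only on $\eps$ and $T$ (and not on $(\kappa,\rho)$), and then to read off the estimate for $\pa_t u_i$ by inspecting the approximate equation \eqref{3.eq} term by term.

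First I would bound the right-hand side of \eqref{4.ei} uniformly in $(\kappa,\rho)$: the term $\sum_i\pi_i\int_{\R^d}u_i^0\log u_i^0\,dx$ is finite by~(H3); one has $\kappa Ct\le C(T)$ for $\kappa\le1$; and $\kappa C\int_0^t\|u_i\|_2^2\,ds\le \kappa C(\eps,T)T$ by the $L^2(\R^d)$ estimate of Lemma~\ref{lem.L2}, whose constant does not depend on $(\kappa,\rho)$. Since every term on the left-hand side of \eqref{4.ei} other than the entropy is nonnegative, this yields at once the upper bound $\sum_i\pi_i\int_{\R^d}u_i(t)\log u_i(t)\,dx\le C(\eps,T)$; combining it with mass conservation and the moment bound \eqref{4.moment} exactly as in Lemma~\ref{lem.ulogu} gives $\|u_i\log u_i\|_{L^\infty(0,T;L^1(\R^d))}\le C(\eps,T)$, which is the first half of \eqref{est3}, and in particular $\sum_i\pi_i\int_{\R^d}u_i(t)\log u_i(t)\,dx\ge -C(\eps,T)$ for a.e.\ $t$. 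Inserting this lower bound back into \eqref{4.ei}, each of $\kappa\|\na\sqrt{u_i}\|_{L^2(0,T;L^2(\R^d))}^2$, $\sigma_i\|(-\Delta)^{\alpha/2}\sqrt{u_i}\|_{L^2(0,T;L^2(\R^d))}^2$, $\|\na\widetilde{\K}_{(1-\beta)/4}^{(\eps)}*u_i\|_{L^2(0,T;L^2(\R^d))}^2$ and $\kappa\|u_i(\log u_i)_+W_\rho*u_i\|_{L^1(0,T;L^1(\R^d))}$ is then $\le C(\eps,T)$; adding the trivial contributions $\kappa\|\sqrt{u_i}\|_{L^2(0,T;L^2)}^2=\kappa T\|u_i^0\|_1$ and $\sigma_i\|\sqrt{u_i}\|_{L^2(0,T;L^2)}^2=\sigma_i T\|u_i^0\|_1$, controlled by mass conservation, upgrades the gradient terms to the full norms in \eqref{est1}, and \eqref{est2} follows directly.

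It remains to bound $\pa_t u_i$ in $L^q(0,T;W^{-1,q}(\R^d))$ for some $q>1$. Using \eqref{3.eq} I would write $\pa_t u_i=\kappa\Delta u_i-\sigma_i(-\Delta)^\alpha u_i-\kappa g_\rho[u_i]+\diver\big(\sum_j a_{ij}(u_i)_+\na\K_{(1-\beta)/2}^{(\eps)}*u_j\big)$ and estimate each term separately for $q>1$ chosen small (with large conjugate exponent $q'$). Since $\K_{(1-\beta)/2}^{(\eps)}\in C_0^2(\R^d)$, one has $\|\na\K_{(1-\beta)/2}^{(\eps)}*u_j\|_\infty\le C(\eps)\|u_j\|_1$, so by mass conservation and Lemma~\ref{lem.L2} the flux $(u_i)_+\na\K_{(1-\beta)/2}^{(\eps)}*u_j$ is bounded in $L^\infty(0,T;L^1(\R^d)\cap L^2(\R^d))$, hence in $L^\infty(0,T;L^q(\R^d))$, and its divergence is bounded in $L^q(0,T;W^{-1,q})$. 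Likewise $\|\kappa g_\rho[u_i]\|_1\le 2\kappa\|u_i\|_2^2\le C(\eps,T)$ by \eqref{3.g1} and Lemma~\ref{lem.L2}, and $L^1(\R^d)\hookrightarrow W^{-1,q}(\R^d)$ for $q<d/(d-1)$. For $\kappa\Delta u_i$ the only natural bound is in $L^2(0,T;H^{-1}(\R^d))$, which on the whole space does not embed into $L^q(0,T;W^{-1,q})$; instead I would note that $\na u_i=2\sqrt{u_i}\,\na\sqrt{u_i}$ is bounded in $L^1(0,T;L^1(\R^d))$ by Cauchy--Schwarz, mass conservation and \eqref{est1}, interpolate with $\na u_i\in L^2(0,T;L^2(\R^d))$ from Lemma~\ref{lem.L2} to obtain $\na u_i\in L^q((0,T)\times\R^d)$, and observe that the prefactor $\kappa$ beats the $\kappa$-blowup of both bounds, so that $\kappa\Delta u_i$ is bounded by $C(\eps,T)$ in $L^q(0,T;W^{-1,q})$. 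Finally, for $\sigma_i(-\Delta)^\alpha u_i$ I would use the fractional product rule \cite[Prop.~1.5]{BWZ17}, as in the proof of Lemma~\ref{lem.beta}, to get $\sqrt{\sigma_i}\|(-\Delta)^{\alpha/2}u_i\|_{L^2(0,T;L^1(\R^d))}\le C(\eps,T)$ from mass conservation and \eqref{est1}, write $(-\Delta)^\alpha u_i=(-\Delta)^{\alpha/2}\big[(-\Delta)^{\alpha/2}u_i\big]$, and pair it against test functions via the embedding $W^{1,q'}(\R^d)\hookrightarrow W^{\alpha,\infty}(\R^d)$, which holds for $q$ sufficiently close to $1$ (e.g.\ $1<q<d/(d-1+\alpha)$).

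I expect the $\pa_t u_i$ estimate to be the only genuine difficulty, and within it the two ``stiff'' terms: the artificial viscosity $\kappa\Delta u_i$, whose $\kappa$-weighted bound lives only in $L^2(0,T;H^{-1})$ and must be routed through an interpolation with the entropy production to reach $L^q(0,T;W^{-1,q})$ on an unbounded domain, and the self-diffusion $\sigma_i(-\Delta)^\alpha u_i$, for which one has to absorb the loss of one power of integrability in the fractional Leibniz rule (landing in $L^1$) through the Sobolev embedding $W^{1,q'}\hookrightarrow W^{\alpha,\infty}$ for $q$ near $1$. Everything else is bookkeeping; the recurring point of care is to check at each step that the constants depend only on $\eps$ and $T$, which holds because every bound traces back to \eqref{4.moment}, mass conservation, Lemma~\ref{lem.ei}, Lemma~\ref{lem.ulogu}, and Lemma~\ref{lem.L2}.
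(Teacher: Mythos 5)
Your proof is correct and follows essentially the same route as the paper's: \eqref{est1}, \eqref{est2} and the first half of \eqref{est3} are read off from the entropy inequality \eqref{4.ei} combined with mass conservation, the moment bound \eqref{4.moment}, Lemma \ref{lem.L2} and Lemma \ref{lem.ulogu}, and the $\pa_t u_i$ bound is obtained by testing \eqref{3.eq} against $\phi\in L^p(0,T;W^{1,p}(\R^d))$ with $p=q'$ large. Your treatment of the viscosity term (interpolating $\sqrt{\kappa}\,\na u_i$ between $L^1$ via $\na u_i=2\sqrt{u_i}\na\sqrt{u_i}$ and $L^2(0,T;L^2(\R^d))$ from Lemma \ref{lem.L2}, so that the extra factor $\sqrt{\kappa}$ absorbs the blow-up) and of the fractional diffusion term (pairing $(-\Delta)^{\alpha/2}u_i\in L^2(0,T;L^1(\R^d))$ against $(-\Delta)^{\alpha/2}\phi\in L^\infty(\R^d)$, which needs $q'>d/(1-\alpha)$) coincide with the paper's $I_9$ and $I_{10}$. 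The one place where you genuinely diverge is the flux term: you use $\|\na\K_{(1-\beta)/2}^{(\eps)}*u_j\|_\infty\le C(\eps)\|u_j\|_1$ from Young's convolution inequality and the regularity of the truncated kernel, whereas the paper writes the flux as $u_i\,\widetilde{\K}_{(1-\beta)/4}^{(\eps)}*(\na\widetilde{\K}_{(1-\beta)/4}^{(\eps)}*u_j)$ and combines the Hardy--Littlewood--Sobolev inequality with the dissipation bound $\|\na\widetilde{\K}_{(1-\beta)/4}^{(\eps)}*u_j\|_{L^2(0,T;L^2(\R^d))}\le C$ from \eqref{est2}. Both arguments are valid for the lemma as stated, since the constant is allowed to depend on $\eps$; the paper's longer route is chosen because the identical computation is reused in Lemma \ref{lem.est2} to bound $\pa_t u_i^{(\eps,\kappa)}$ with constants \emph{independent} of $\eps$, and there your $C(\eps)\|u_j\|_1$ bound degenerates as $\eps\to 0$. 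So your shortcut is fine here but would have to be replaced by the HLS-based estimate at the next approximation level.
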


\begin{proof}
Estimates \eqref{est1} and \eqref{est2} follow from Lemmas \ref{lem.mass}
and \ref{lem.ei}. The first estimate in \eqref{est3}
is proved in Lemma \ref{lem.ulogu}. 
It remains to prove the second estimate in \eqref{est3}. 

Let $p>\max\{d/(1-\alpha),2d/(1-\beta)\}>2$ with $1/p+1/q=1$ and use 
$\phi\in C^0([0,T];C_0^\infty(\R^d))$
as a test function in the weak formulation of \eqref{3.eq}: 
\begin{align}
  & \int_0^T\langle\pa_t u_i,\phi\rangle dt =: I_9 + \cdots + I_{12}, 
	\quad\mbox{where } \label{4.aux3} \\
	& I_9 = -\kappa\int_0^T\int_{\R^d}\na u_i\cdot\na\phi dxdt, \nonumber \\ 
	& I_{10} = -\sigma_i\int_0^T\int_{\R^d}(-\Delta)^{\alpha/2}u_i
	(-\Delta)^{\alpha/2}\phi dxdt, \nonumber \\
	& I_{11} = -\kappa\int_0^T\int_{\R^d}g_\rho[u_i]\phi dxdt, \nonumber \\
	& I_{12} = -\sum_{j=1}^na_{ij}\int_0^T\int_{\R^d}u_i\na\phi\cdot\big(
	\widetilde{\K}_{(1-\beta)/4}^{(\eps)}*(\na\widetilde{\K}_{(1-\beta)/4}^{(\eps)}*u_j)
	\big)dxdt. \nonumber 
\end{align}
We estimate the integrals $I_9,\ldots,I_{12}$. First, by Lemma \ref{lem.L2} 
with $T^*=T$, it holds that $\sqrt{\kappa}\|\na u_i\|_{L^2(0,T; L^2(\R^d))}
\leq C(\eps,T)$.
We infer from \eqref{est1} that $\sqrt{\kappa}\na u_i=2\sqrt{\kappa}
\sqrt{u_i}\na\sqrt{u_i}$ is bounded in $L^1(\R^d)$, i.e.\ 
$\sqrt{\kappa}\|\na u_i\|_{L^1(0,T; L^1(\R^d))}\leq C(\eps,T)$. 
Hence, since $q<2$, it follows by interpolation that 
$\sqrt{\kappa}\|\na u_i\|_{L^q(0,T; L^q(\R^d))}\leq C(\eps,T)$. We deduce that
$$
  |I_9| \le \kappa\|\na u_i\|_{L^q(0,T;L^q(\R^d))}\|\na\phi\|_{L^p(0,T;L^p(\R^d))}
	\le C\|\phi\|_{L^p(0,T;W^{1,p}(\R^d))}.
$$
We can prove, using the generalized Stroock--Varopoulos inequality 
(Lemma \ref{lem.GSVI}) in a similar way as in Lemma \ref{lem.beta}, that
$\sqrt{\sigma_i}\|(-\Delta)^{\alpha/2}u_i\|_{L^2(0,T;L^1(\R^d))} \le C$. Therefore,
since $p>d/(1-\alpha)$,
$$
  |I_{10}| \le \sigma_i\|(-\Delta)^{\alpha/2}u_i\|_{L^2(0,T;L^1(\R^d))}
	\|(-\Delta)^{\alpha/2}\phi\|_{L^2(0,T;L^\infty(\R^d))}
	\le C\|\phi\|_{L^2(0,T;W^{1,p}(\R^d))}.
$$
It follows from property \eqref{3.g1} of $g_\rho[u_i]$, the $L^\infty(0,T;L^2(\R^d))$
estimate of $u_i$ in Lemma \ref{lem.L2}, and the embedding $W^{1,p}(\R^d)
\hookrightarrow L^\infty(\R^d)$ that
$$
  |I_{11}| \le \int_0^T\|g_\rho[u_i]\|_1\|\phi\|_\infty dt
	\le C\int_0^T\|u_i\|_2^2\|\phi\|_{W^{1,p}(\R^d)} dt
	\le C(\eps,T)\|\phi\|_{L^p(0,T;W^{1,p}(\R^d))}.
$$
Finally, the Hardy--Littlewood--Sobolev inequality
(Lemma \ref{lem.HLS} with $r=2d/(d+1-\beta)$) and the H\"older inequality 
with $1/q_2 + 1/p = 1/r$ lead to
\begin{align*}
  |I_{12}| &\le C(\eps)\int_0^T
	\|u_i\na\phi\|_r\|\na\widetilde{\K}_{(1-\beta)/4}^{(\eps)}*u_j\|_2 dt \\
	&\le C(\eps)\int_0^T\|u_i\|_{q_2}\|\na\phi\|_{p}
	\|\na\widetilde{\K}_{(1-\beta)/4}^{(\eps)}*u_j\|_2 dt 
	\le C(\eps,T)\|\na\phi\|_{L^{p}(0,T;L^{p}(\R^d))},
\end{align*}
where we used Lemma \ref{lem.L2}, mass conservation, the fact that $q_2\in [1,2)$, 
and estimate \eqref{est2} in the last step. 
Putting together the estimates for $I_9,\ldots,I_{12}$,
we conclude the proof from \eqref{4.aux3} for $\phi\in L^p(0,T;W^{1,p}(\R^d))$
with $p>\max\{d/(1-\alpha),2d/(1-\beta)\}$.
\end{proof}

\subsection{Limit $\rho\to 0$}

We conclude from Lemma \ref{lem.comp} in the Appendix that
$$
  V:=\bigg\{v\in H^1(\R^d):
	\int_{\R^d}(1+|x|^2)^{m/2}|v(x)| dx<\infty\bigg\}
$$
is compactly embedded into $L^2(\R^d)$.
Moreover, the embedding $L^2(\R^d)\hookrightarrow H^{-s}(\R^d)$
is continuous for any $s>0$.
The uniform $L^2(0,T;H^1(\R^d))$ bound in Lemma 
\ref{lem.L2} and the moment bound \eqref{4.moment} 
show that $(u^{(\eps,\kappa,\rho)})$ is bounded in $L^2(0,T;V)$, while,
by estimate \eqref{est3}, $(\pa_t u_i^{(\eps,\kappa,\rho)})$ is bounded in
$L^1(0,T;H^{-s}(\R^d))$. It follows from the Aubin--Lions lemma
that there exists a subsequence, which is not relabeled, such that, as $\rho\to 0$,
$$
  u_i^{(\eps,\kappa,\rho)} \to u_i \quad\mbox{strongly in }L^2(0,T;L^2(\R^d)).
$$
Since $(u_i^{(\eps,\kappa,\rho)})$ is bounded in $L^\infty(0,T;L^2(\R^d))
\cap L^2(0,T;H^1(\R^d))$ by Lemma \ref{lem.L2}, the Gagliar\-do--Nirenberg
inequality provides a uniform bound in $L^{2+4/d}(0,T;L^{2+4/d}(\R^d))$.
Hence, there exists $2<p<2+4/d$ such that
\begin{equation}\label{4.Lp}
  u_i^{(\eps,\kappa,\rho)} \to u_i \quad\mbox{strongly in }L^p(0,T;L^p(\R^d)).
\end{equation}
Given the uniform bounds in Lemma \ref{lem.est}, it is quite standard to perform
the limit $\rho\to 0$ in \eqref{3.eq}. We consider here only
the term that explicitly depends on $\rho$, namely
\begin{align*}
  \int_0^T\int_{\R^d}&g_\rho[u_i^{(\eps,\kappa,\rho)}]\phi dxdt
	= \int_0^T\int_{\R^d}u_i^{(\eps,\kappa,\rho)}(W_\rho*u_i^{(\eps,\kappa,\rho)})
	\phi dxdt \\
	&{}- \int_0^T\bigg(\int_{\R^d}u_i^{(\eps,\kappa,\rho)}
	(W_\rho*u_i^{(\eps,\kappa,\rho)})dx\bigg)\bigg(\int_{\R^d}\frac{e^{-|x|^2}}{\pi^{d/2}}
	\phi dx\bigg)dt
\end{align*}
for test functions $\phi\in L^2(0,T;L^\infty(\R^d))$.
Since $\|W_\rho\|_1=1$ and $(u_i^{(\eps,\kappa,\rho)})$ is bounded in
$L^\infty(0,T;L^2(\R^d))$, we have
$$
  W_\rho*u_i^{(\eps,\kappa,\rho)}\rightharpoonup^* u_i
	\quad\mbox{weakly* in }L^\infty(0,T;L^2(\R^d)).
$$
This implies that, for suitable test functions,
$$
  \int_0^T\int_{\R^d}g_\rho[u_i^{(\eps,\kappa,\rho)}]\phi dxdt
	\to \int_0^T\int_{\R^d}g_0[u_i]\phi dxdt\quad\mbox{as }\eps\to 0,
$$
where 
$$
  g_0[v](x) := v(x)^2 - \frac{e^{-|x|^2}}{\pi^{d/2}}\int_{\R^d}v(y)^2dy,
	\quad v\in L^2(\R^d).
$$
We have proved that the limit $u_i^{(\eps,\kappa)}:=u_i$ is a solution to
\begin{align}\nonumber
  & \pa_t u_i^{(\eps,\kappa)} - \kappa\Delta u_i^{(\eps,\kappa)}
	+ \sigma_i(-\Delta)^\alpha u_i^{(\eps,\kappa)} + \kappa g_0[u_i^{(\eps,\kappa)}] \\
	&\phantom{xx}{}
	= \diver\bigg(\sum_{j=1}^n a_{ij} u_i^{(\eps,\kappa)}\na\K_{(1-\beta)/2}^{(\eps)}
	* u_j^{(\eps,\kappa)}\bigg)\quad\mbox{in }\R^d,\ t>0, \label{4.uepskappa} \\
	& u_i^{(\eps,\kappa)}(\cdot,0) = u_i^0\quad\mbox{in }\R^d,\ i=1,\ldots,n.
	\nonumber
\end{align}

The strong convergence \eqref{4.Lp}, Fatou's lemma, and the weak lower
semicontinuity of the $L^2(\R^d)$ norm allow us to take the limit $\rho\to 0$
in the approximate entropy inequality \eqref{4.ei}, leading to
\begin{align*}
  \sum_{i=1}^n&\pi_i\int_{\R^d}u_i^{(\eps,\kappa)}(t)\log u_i^{(\eps,\kappa)}(t)dx
	+ 4\kappa\sum_{i=1}^n\pi_i\int_0^t\int_{\R^d}
	\Big|\na\sqrt{u_i^{(\eps,\kappa)}}\Big|^2 dxds \\
	&\phantom{xx}{}
	+ C\sum_{i=1}^n\sigma_i\int_0^t\int_{\R^d}\Big|(-\Delta)^{\alpha/2}
	\sqrt{u_i^{(\eps,\kappa)}}\Big|^2dxds
	+ \lambda\sum_{i=1}^n\int_0^t\int_{\R^d}|\na\widetilde{\K}_{(1-\beta)/4}^{(\eps)}
	* u_i^{(\eps,\kappa)}|^2 dxds \\
	&\phantom{xx}{}
	+ \kappa\sum_{i=1}^n\pi_i\int_0^t\int_{\R^d}
	(u_i^{(\eps,\kappa)})^2(\log u_i^{(\eps,\kappa)})_+ dxds \\
	&\le \sum_{i=1}^n\pi_i\int_{\R^d}u_i^0\log u_i^0 dx + \kappa Ct
	+ \kappa C\int_0^t\int_{\R^d}(u_i^{(\eps,\kappa)})^2dxds. 
\end{align*}
The last integral on the right-hand side can be controlled by the last integral
on the left-hand side. Therefore,
\begin{align}
  \sum_{i=1}^n&\pi_i\int_{\R^d}u_i^{(\eps,\kappa)}(t)\log u_i^{(\eps,\kappa)}(t)dx
	+ 4\kappa\sum_{i=1}^n\pi_i\int_0^t\int_{\R^d}
	\Big|\na\sqrt{u_i^{(\eps,\kappa)}}\Big|^2 dxds \nonumber \\
	&{}+ C\sum_{i=1}^n\sigma_i\int_0^t\int_{\R^d}\Big|(-\Delta)^{\alpha/2}
	\sqrt{u_i^{(\eps,\kappa)}}\Big|^2dxds
	+ \lambda\sum_{i=1}^n\int_0^t\int_{\R^d}|\na\widetilde{\K}_{(1-\beta)/4}^{(\eps)}
	u_i^{(\eps,\kappa)}|^2 dxds \label{4.ei2} \\
	&{}+ \kappa\sum_{i=1}^n\pi_i\int_0^t\int_{\R^d}
	(u_i^{(\eps,\kappa)})^2(\log u_i^{(\eps,\kappa)})_+ dxds 
	\le \sum_{i=1}^n\pi_i\int_{\R^d}u_i^0\log u_i^0 dx + \kappa C(t+1). \nonumber 
\end{align}
This shows that the uniform bounds in Lemma \ref{lem.est} also hold for
$u^{(\eps,\kappa)}$ with constants independent of $\eps$.

\begin{lemma}\label{lem.est2}
The solution $u_i:=u_i^{(\eps,\kappa)}$ constructed above satisfies the
following uniform estimates with a constant $C(T)>0$ that is independent of $\eps$
and $\kappa$:
\begin{align}
  \sqrt{\kappa}\|\sqrt{u_i}\|_{L^2(0,T;H^1(\R^d))}
	+ \sqrt{\sigma_i}\|\sqrt{u_i}\|_{L^2(0,T;H^\alpha(\R^d))} &\le C(T), \label{4.ek1} \\
	\|\na\widetilde{\K}_{(1-\beta)/4}^{(\eps)}*u_i\|_{L^2(0,T;L^2(\R^d))}
	+ \kappa\|u_i^2(\log u_i)_+\|_{L^1(0,T;L^1(\R^d))} &\le C(T), \label{4.ek2} \\
	\sqrt{\sigma_i}\|(-\Delta)^{\alpha/2}u_i\|_{L^2(0,T;L^1(\R^d))}
	+ \|\pa_t u_i\|_{L^q(0,T;W^{-1,q}(\R^d))} &\le C(T) \label{4.ek3} \\
	\|u_i\log u_i\|_{L^\infty(0,T;L^1(\R^d))} 
	+ \sup_{0<t<T}\int_{\R^d}|x|^m u_i(t)dx &\le C(T), \label{4.ek4}
\end{align}
where $q>1$.
\end{lemma}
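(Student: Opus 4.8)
The estimates will follow by revisiting, at the level of $u^{(\eps,\kappa)}$, the computations of Section~\ref{sec.est} and of Lemmas~\ref{lem.mass}, \ref{lem.ulogu}, \ref{lem.est}, now using the $\eps$-uniform entropy inequality \eqref{4.ei2} in place of its $\rho$-dependent counterpart \eqref{4.ei}. Estimate \eqref{4.ek2} and the seminorm parts of \eqref{4.ek1} are read off directly from \eqref{4.ei2}, whose right-hand side equals $\sum_{i=1}^n\pi_i\int_{\R^d}u_i^0\log u_i^0\,dx+\kappa C(t+1)$ and is therefore bounded uniformly in $\eps$ and, for $\kappa$ in a bounded range, in $\kappa$. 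The full $H^1$ and $H^\alpha$ norms in \eqref{4.ek1} are obtained by adding the mass-conservation identity $\|\sqrt{u_i}\|_{L^2(0,T;L^2(\R^d))}^2=T\|u_i^0\|_1$, which is still valid for $u^{(\eps,\kappa)}$ by Lemma~\ref{lem.mass}; the same remark shows that all estimates of Lemma~\ref{lem.est} persist for $u^{(\eps,\kappa)}$ with $\eps$-independent constants.

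For the first summand in \eqref{4.ek3} I would repeat verbatim the product-rule argument of Lemma~\ref{lem.beta} (using \cite[Prop.~1.5]{BWZ17}): from $\|(-\Delta)^{\alpha/2}u_i\|_1\le4\|u_i\|_1^{1/2}\|(-\Delta)^{\alpha/2}\sqrt{u_i}\|_2$ one squares, integrates in time, and invokes mass conservation together with the $\eps$-uniform bound on $\sigma_i\|(-\Delta)^{\alpha/2}\sqrt{u_i}\|_{L^2(0,T;L^2(\R^d))}^2$. For the time-derivative bound I would re-run the estimates $I_9,\dots,I_{12}$ of Lemma~\ref{lem.est} with $g_0$ in place of $g_\rho$; the inputs that render the constants $\eps$-uniform are: $\kappa\na u_i=2\sqrt\kappa\,\sqrt{u_i}\,(\sqrt\kappa\,\na\sqrt{u_i})$ is bounded in $L^2(0,T;L^1(\R^d))$ by Cauchy--Schwarz in $x$, mass conservation and \eqref{4.ek1}; $\kappa\|u_i\|_{L^2(0,T;L^2(\R^d))}^2$ is bounded, splitting $\{u_i\ge e\}$ (where $u_i^2\le u_i^2(\log u_i)_+$, use \eqref{4.ek2}) and $\{u_i<e\}$ (where $u_i^2\le e\,u_i$, use mass), which also controls $\kappa g_0[u_i]$ in $L^1(0,T;L^1(\R^d))$; and the $\eps$-uniform Hardy--Littlewood--Sobolev bound for $\na\K^{(\eps)}_{(1-\beta)/2}*u_j$ discussed below.

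The moment bound in \eqref{4.ek4} is the delicate point. I would redo Lemma~\ref{lem.moment} rigorously, testing \eqref{4.uepskappa} with $(1+|x|^2)^{m/2}\psi_R$ ($\psi_R$ as in Lemma~\ref{lem.mass}) and letting $R\to\infty$ with the help of \eqref{3.psi1}--\eqref{3.psiR} and the key pointwise bound \eqref{2.xm}. The $\kappa$-Laplacian term is $O(\sqrt\kappa)$ thanks to $\kappa\na u_i\in L^2(0,T;L^1(\R^d))$; the $\kappa g_0[u_i]$ term is nonpositive up to a part controlled by $\kappa\|u_i\|_{L^2(0,T;L^2(\R^d))}^2$; the $\sigma_i(-\Delta)^\alpha$ term is absorbed into a Gronwall term via \eqref{2.xm}. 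Since $|\na(1+|x|^2)^{m/2}|\le C$ for $m\le1$, the cross term reduces to bounding $u_i\,\na\K^{(\eps)}_{(1-\beta)/2}*u_j$ in $L^1(0,T;L^1(\R^d))$ uniformly in $(\eps,\kappa)$. Writing $\na\K^{(\eps)}_{(1-\beta)/2}*u_j=\widetilde\K^{(\eps)}_{(1-\beta)/4}*(\na\widetilde\K^{(\eps)}_{(1-\beta)/4}*u_j)$ and using $0\le\widetilde\K^{(\eps)}_{(1-\beta)/4}\le\K_{(1-\beta)/4}$ together with the Hardy--Littlewood--Sobolev inequality (Lemma~\ref{lem.HLS}), estimate \eqref{4.ek2} yields an $\eps$-uniform bound for $\na\K^{(\eps)}_{(1-\beta)/2}*u_j$ in $L^2(0,T;L^{2d/(d+\beta-1)}(\R^d))$, so by Hölder it remains to bound $u_i$ in $L^2(0,T;L^{2d/(d-\beta+1)}(\R^d))$ uniformly in $(\eps,\kappa)$. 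Once the moment bound holds, $\|u_i\log u_i\|_{L^\infty(0,T;L^1)}$ follows from Lemma~\ref{lem.ulogu}, whose hypothesis is precisely the moment bound plus the entropy bound.

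The main obstacle I anticipate is this $\eps$-uniform sub-$L^2$ integrability of $u_i$, i.e.\ recovering the analog of the third term of \eqref{2.est1} through the regularization. The plan is to show first that $w_i:=\widetilde\K^{(\eps)}_{(1-\beta)/4}*u_i$ is bounded in $L^2(0,T;H^1(\R^d))$ uniformly in $\eps$: its seminorm part is \eqref{4.ek2}, while for its $L^2(\R^d)$ part one splits frequencies, using $|\widehat{\widetilde\K^{(\eps)}_{(1-\beta)/4}}(\xi)|\le C|\xi|^{-(1-\beta)/2}$ for $|\xi|\le1$ (which holds uniformly in $\eps$ because $0\le\widetilde\K^{(\eps)}_{(1-\beta)/4}(x)\le|x|^{(1-\beta)/2-d}$) together with $|\hat u_i(\xi)|\le\|u_i^0\|_1$ at low frequencies and the seminorm bound at high frequencies. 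By the fractional Sobolev embedding this gives an $\eps$-uniform bound for $w_i$ in $L^2(0,T;L^{2d/(d-\beta-1)}(\R^d))$, and one recovers the needed integrability of $u_i$ by applying $(-\Delta)^{(1-\beta)/4}$, the regularization error being controlled because $\widetilde\K^{(\eps)}_{(1-\beta)/4}$ coincides with $\K_{(1-\beta)/4}$ on $\{\eps\le|x|\le1/\eps\}$ and is dominated by it elsewhere. Closing the Gronwall inequality for the $m$th moment then completes the proof.
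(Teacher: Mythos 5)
Most of your argument tracks the paper's proof: \eqref{4.ek1}--\eqref{4.ek2} are read off from \eqref{4.ei2} (plus mass conservation for the $L^2$ parts of the $\sqrt{u_i}$ norms), the first half of \eqref{4.ek3} is the product-rule computation of Lemma \ref{lem.beta}, the time-derivative bound is a re-run of $I_9,\dots,I_{12}$ with $g_0$ in place of $g_\rho$, and \eqref{4.ek4} is obtained by redoing Lemma \ref{lem.moment} with the cutoff of Lemma \ref{lem.mass} and then invoking Lemma \ref{lem.ulogu}. Your observation that $\kappa\|u_i\|_{L^2(0,T;L^2(\R^d))}^2\le C$ follows from splitting $\{u_i\ge e\}$ and $\{u_i<e\}$ and using the $\kappa\|u_i^2(\log u_i)_+\|_{L^1(0,T;L^1(\R^d))}$ control is precisely the purpose of the $g_0$ term, and it is all the integrability of $u_i$ the paper uses at this stage: in the critical cross term of the moment estimate the paper bounds $\int_0^T\|u_i\|_2\|\na\widetilde{\K}_{(1-\beta)/4}^{(\eps)}*u_j\|_2\,dt$ by Cauchy--Schwarz in time, using the $\eps$-independent $L^2(0,T;L^2(\R^d))$ bound for $u_i$ together with \eqref{4.ek2}. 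The genuine $L^2(0,T;H^{(\beta+1)/2}(\R^d))$-type integrability of $u_i$ is only recovered later, for $u^{(\kappa)}$ after the limit $\eps\to0$, via \eqref{4.est5} and \eqref{2.Lqp}.

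The last paragraph of your proposal --- the attempt to produce an $\eps$-uniform fractional Sobolev bound for $u_i$ itself before passing to the limit --- is therefore not what the paper does, and as written it has two real gaps. First, the low-frequency estimate $|\widehat{\widetilde{\K}_{(1-\beta)/4}^{(\eps)}}(\xi)|\le C|\xi|^{-(1-\beta)/2}$ does not follow from the pointwise domination $0\le\widetilde{\K}_{(1-\beta)/4}^{(\eps)}\le\K_{(1-\beta)/4}$: the Fourier transform is not monotone under pointwise ordering of kernels (note also that $\|\widetilde{\K}_{(1-\beta)/4}^{(\eps)}\|_1$ blows up as $\eps\to0$), so this step would require a separate oscillatory-integral argument. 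Second, and more seriously, ``applying $(-\Delta)^{(1-\beta)/4}$'' to $w_i=\widetilde{\K}_{(1-\beta)/4}^{(\eps)}*u_i$ does not return $u_i$; the error is $(-\Delta)^{(1-\beta)/4}\big[(\widetilde{\K}_{(1-\beta)/4}^{(\eps)}-\K_{(1-\beta)/4})*u_i\big]$, and the fact that the two kernels coincide on the annulus $\{\eps\le|x|\le1/\eps\}$ and are ordered elsewhere gives no control on this nonlocal operator applied to their difference. If you drop that detour and estimate the critical term exactly as you estimated $I_{12}$ --- outer Hardy--Littlewood--Sobolev on $\widetilde{\K}_{(1-\beta)/4}^{(\eps)}$, then H\"older and interpolation with mass conservation using the $L^2(0,T;L^2(\R^d))$ bound you already derived --- your proof coincides with the paper's.
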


\begin{proof}
Estimates \eqref{4.ek1}--\eqref{4.ek2} follow from \eqref{4.ei2}. 
The first estimate in \eqref{4.ek3} is a consequence of
the $L^\infty(0,T;L^1(\R^d))$ bound for $u_i$ and the $L^2(0,T;L^2(\R^d))$
norm for $(-\Delta)^{\alpha/2}\sqrt{u_i}$; see the proof of \eqref{2.est2}.
The second estimate in \eqref{4.ek3} is shown as in Lemma \ref{lem.est}, now
using the $\eps$-independent entropy estimates.
The moment estimate for $u_i$ can be proved as in Lemma \ref{lem.moment}.
Compared to \eqref{4.moment}, we are able to derive a uniform bound independent
of $\eps$. This is possible since we have an $\eps$-independent 
$L^2(\R^d)$ bound for $u_i$ after having performed the limit $\rho\to 0$. 
This bound is not available for $u_i^{(\eps,\kappa,\rho)}$, since
its $L^2(\R^d)$ estimate depends on $\eps$; see Lemma \ref{lem.L2}.
The critical term becomes, using the Hardy--Littlewood--Sobolev inequality
and a cutoff function $\psi_R$,
\begin{align*}
  \bigg|\sum_{j=1}^n & a_{ij}\int_0^T\int_{\R^d}u_i\na(1+|x|^2)^{m/2}\cdot\big(
	\widetilde{\K}_{(1-\beta)/4}^{(\eps)}
	*(\na\widetilde{\K}_{(1-\beta)/4}^{(\eps)}*u_j)\big)\psi_R dxdt\bigg| \\
  &\le C\int_0^T\|u_i\|_2\|\na(1+|x|^2)^{m/2}\|_{\infty}
	\|\na\widetilde{\K}_{(1-\beta)/4}^{(\eps)}*u_j\|_2 dt \le C.
\end{align*}
Then, proceeding as in the proofs of Lemmas \ref{lem.moment} and \ref{lem.mass}
(to handle the cutoff), we obtain the
moment estimate for $u_i$. This estimate, together with the upper bound for
$\int_{\R^d}u_i\log u_idx$ from \eqref{4.ei2} and the mass conservation property,
imply the $L^\infty(0,T;L^1(\R^d))$ bound for $u_i\log u_i$, by proceeding as 
in the proof of Lemma \ref{lem.ulogu}. 
\end{proof}

\subsection{Limit $\eps\to 0$}

The uniform bounds of Lemma \ref{lem.est2} allow us to apply the compactness
result of Aubin--Lions-type in Lemma \ref{lem.Aubin} below to conclude that
there exists a subsequence (not relabeled) such that
$$
  u_i^{(\eps,\kappa)}\to u_i^{(\kappa)}\quad\mbox{strongly in }L^2(0,T;L^2(\R^d))
	\mbox{ as }\eps\to 0.
$$
We wish to perform the limit $\eps\to 0$ in \eqref{4.uepskappa}. The only
nontrivial term is that one on the right-hand side of \eqref{4.uepskappa}.
We first notice that, by Lemma \ref{lem.est2},
\begin{equation}\label{4.xi}
  \na\widetilde{\K}_{(1-\beta)/4}^{(\eps)}*u_i^{(\eps,\kappa)}\rightharpoonup\xi_i
	\quad\mbox{weakly in }L^2(0,T;L^2(\R^d))
\end{equation}
for some $\xi_i\in L^2(0,T;L^2(\R^d))$, $i=1,\ldots,n$. To identify $\xi_i$, we consider
\begin{equation}\label{4.aux4}
  \int_0^T\int_{\R^d}\phi\big(\widetilde{\K}_{(1-\beta)/4}^{(\eps)}
	*u_i^{(\eps,\kappa)}\big)dxdt
	= \int_0^T\int_{\R^d}u_i^{(\eps,\kappa)}\big(\widetilde{\K}_{(1-\beta)/4}^{(\eps)}
	*\phi\big)dxdt,
\end{equation}
and we wish to pass to the limit $\eps\to 0$ on the right-hand side.
For this, we remark that it follows from the definition of 
$\widetilde{\K}_{(1-\beta)/4}^{(\eps)}$ that 
\begin{align}\label{4.Kphi}
  \|\widetilde{\K}_{(1-\beta)/4}^{(\eps)}*\phi\|_{L^2(0,T;L^{2d/(d-1+\beta)}(\R^d))}
	&\le \|\K_{(1-\beta)/4}*|\phi|\|_{L^2(0,T;L^{2d/(d-1+\beta)}(\R^d))} \\
	&\le C\|\phi\|_{L^2(0,T;L^2(\R^d))}. \nonumber
\end{align}
It holds that $0\le\widetilde{\K}_{(1-\beta)/4}^{(\eps)}\nearrow\K_{(1-\beta)/4}$ 
a.e.\ in $\R^d$ and $\K_{(1-\beta)/4}$ (the kernel of the Riesz potential)
is integrable in the unit ball $B_1(0)$,
while its square $\K_{(1-\beta)/4}^2$ is integrable in $\R^d\setminus B_1(0)$.
Hence, we infer from Young's convolution inequality and the monotone convergence
theorem that
\begin{align*}
  \big\| \big[ \big(&\widetilde{\K}_{(1-\beta)/4}^{(\eps)}
	- \K_{(1-\beta)/4})\mathrm{1}_{B_1(0)} \big]*\phi \big\|_{L^2(0,T;L^2(\R^d))} \\
	&\le \big\|\big(\widetilde{\K}_{(1-\beta)/4}^{(\eps)}
	- \K_{(1-\beta)/4})\mathrm{1}_{B_1(0)}\big\|_1
	\|\phi\|_{L^2(0,T;L^2(\R^d))} \to 0, \\
	\big\| \big[ \big(&\widetilde{\K}_{(1-\beta)/4}^{(\eps)}
	- \K_{(1-\beta)/4})\mathrm{1}_{\R^d\setminus B_1(0)}\big]*\phi 
	\big\|_{L^2(0,T;L^2(\R^d))} \\
	&\le \big\|\big(\widetilde{\K}_{(1-\beta)/4}^{(\eps)}
	- \K_{(1-\beta)/4})\mathrm{1}_{\R^d\setminus B_1(0)}\big\|_2
	\|\phi\|_{L^2(0,T;L^1(\R^d))} \to 0,
\end{align*}
such that for $\phi\in L^2(0,T;L^2(\R^d)\cap L^1(\R^d))$,
\begin{equation}\label{4.convK1}
  \widetilde{\K}_{(1-\beta)/4}^{(\eps)}*\phi\to \K_{(1-\beta)/4}*\phi
	\quad\mbox{strongly in }L^2(0,T;L^2(\R^d)).
\end{equation}
Thus, we deduce from \eqref{4.aux4} and the strong convergence of 
$(u_i^{(\eps,\kappa)})$ in $L^2(0,T;L^2(\R^d))$ that
\begin{align*}
  \int_0^T\int_{\R^d}\phi\big(\widetilde{\K}_{(1-\beta)/4}^{(\eps)}
	*u_i^{(\eps,\kappa)}\big)dxdt
	&\to \int_0^T\int_{\R^d}u_i^{(\kappa)}\K_{(1-\beta)/4}*\phi dxdt \\
	&= \int_0^T\int_{\R^d}\phi\big(\K_{(1-\beta)/4}*u_i^{(\kappa)}\big)dxdt,
\end{align*}
which means that
$$
  \widetilde{\K}_{(1-\beta)/4}^{(\eps)}*u_i^{(\eps,\kappa)}
	\rightharpoonup \K_{(1-\beta)/4}*u_i^{(\kappa)}
	\quad\mbox{weakly in }L^2(0,T;(L^1(\R^d)\cap L^2(\R^d))').
$$
Hence, we can identify the limit $\xi_i$ in \eqref{4.xi}, leading to the
convergence
\begin{equation}\label{4.convK2}
  \na\widetilde{\K}_{(1-\beta)/4}^{(\eps)}*u_i^{(\eps,\kappa)}\rightharpoonup
	\na \K_{(1-\beta)/4}*u_i^{(\kappa)}
	\quad\mbox{weakly in }L^2(0,T;L^2(\R^d)).
\end{equation}

We claim that a similar weak limit holds for $\na\K_{(1-\beta)/2}^{(\eps)}$
instead of $\na\widetilde{\K}_{(1-\beta)/4}^{(\eps)}$. To this end, we use
definition \eqref{2.defK} of $\K_{(1-\beta)/2}^{(\eps)}$ and 
convergences \eqref{4.convK1} and \eqref{4.convK2}:
\begin{align*}
  \int_0^T\int_{\R^d}\phi\na\K_{(1-\beta)/2}^{(\eps)}*u_i^{(\eps,\kappa)}dxdt
	&= \int_0^T\int_{\R^d}\phi\widetilde{\K}_{(1-\beta)/4}^{(\eps)}
	*\na\widetilde{\K}_{(1-\beta)/4}^{(\eps)}*u_i^{(\eps,\kappa)}dxdt \\
	&= \int_0^T\int_{\R^d}\big(\na\widetilde{\K}_{(1-\beta)/4}^{(\eps)}
	*u_i^{(\eps,\kappa)}\big)\big(\widetilde{\K}_{(1-\beta)/4}^{(\eps)}*\phi\big)dxdt \\
	&\to \int_0^T\int_{\R^d}\big(\na\K_{(1-\beta)/4}*u_i^{(\kappa)}\big)
	\big(\K_{(1-\beta)/4}*\phi\big)dxdt \\
	&= \int_0^T\int_{\R^d}\phi\K_{(1-\beta)/4}*\na\K_{(1-\beta)/4}*u_i^{(\kappa)}dxdt \\
	&= \int_0^T\int_{\R^d}\phi\na\K_{(1-\beta)/2}*u_i^{(\kappa)} dxdt
\end{align*}
for any $\phi\in L^2(0,T;L^2(\R^d))$, where we used the
representation $\K_s*f=(-\Delta)^{-s}f$. We infer that
$$
  \na\widetilde{\K}_{(1-\beta)/2}^{(\eps)}*u_i^{(\eps,\kappa)}\rightharpoonup
	\na\K_{(1-\beta)/2}*u_i^{(\kappa)}\quad\mbox{weakly in }L^2(0,T;L^{2}(\R^d)).
$$
Together with the strong $L^2(\R^d)$ convergence of $(u_i^{(\eps,\kappa)})$,
it follows that
$$
  u_i^{(\eps,\kappa)}\na\widetilde{\K}_{(1-\beta)/2}^{(\eps)}*u_j^{(\eps,\kappa)}
	\rightharpoonup u_i^{(\kappa)}\na\K_{(1-\beta)/2}*u_j^{(\kappa)}
	\quad\mbox{weakly in }L^1(0,T;L^1(\R^d)).
$$
These convergences allow us to perform the limit $\eps\to 0$ in
\eqref{4.uepskappa} to conclude that $u^{(\kappa)}$ solves
\begin{align}
  & \pa_t u_i^{(\kappa)} - \kappa\Delta u_i^{(\kappa)} 
	+ \sigma(-\Delta)^\alpha u_i^{(\kappa)} + \kappa g_0[u_i^{(\kappa)}] \nonumber \\
	&\phantom{xx}{}
	= \diver\bigg(\sum_{j=1}^n a_{ij} u_i^{(\kappa)}\na(-\Delta)^{(\beta-1)/2}
	u_j^{(\kappa)}\bigg)\quad\mbox{in }\R^d,\ t>0, \label{4.ukappa} \\
	& u_i^{(\kappa)}(\cdot,0) = u_i^0\quad\mbox{in }\R^d,\ i=1,\ldots,n. \nonumber
\end{align}
The limit $\eps\to 0$ in the entropy inequality \eqref{4.ei2} leads to
\begin{align}
  \sum_{i=1}^n &\pi_i\int_{\R^d}u_i^{(\kappa)}(t)\log u_i^{(\kappa)}(t)dx
	+ 4\kappa\sum_{i=1}^n \pi_i\int_0^t\int_{\R^d}
	\Big|\na\sqrt{u_i^{(\kappa)}}\Big|^2 dxds \nonumber \\
	&{}+ C\sum_{i=1}^n\sigma_i\int_0^t\int_{\R^d}\Big|(-\Delta)^{\alpha/2}
	\sqrt{u_i^{(\kappa)}}\Big|^2 dxds
	+ \lambda\sum_{i=1}^n\int_0^t\int_{\R^d}|\na(-\Delta)^{(\beta-1)/4}u_i^{(\kappa)}|^2
	dxds \label{4.ei3} \\
	&{}+ \kappa\sum_{i=1}^n\pi_i\int_0^t\int_{\R^d}(u_i^{(\kappa)})^2
	(\log u_i^{(\kappa)})_+ dxds
	\le \sum_{i=1}^n\pi_i\int_{\R^d}u_i^0\log u_i^0 dx + \kappa C(t+1) \nonumber
\end{align}
for $t>0$. Estimates \eqref{4.ek1}--\eqref{4.ek4} also hold for $u^{(\kappa)}$
with the exception that the first bound in \eqref{4.ek2} is replaced by
\begin{equation}\label{4.est5}
  \|\na(-\Delta)^{(\beta-1)/4}u_i^{(\kappa)}\|_{L^2(0,T;L^2(\R^d))}\le C,
	\quad i=1,\ldots,n.
\end{equation}

\subsection{Limit $\kappa\to 0$}

We deduce from \eqref{2.Lqp} with $q=2(d+\beta+1)/d$ 
and the $\kappa$-uniform bounds for $u_i^{(\kappa)}$ 
that $(u_i^{(\kappa)})$ is bounded in $L^{2(d+\beta+1)/d}(0,T;L^{2}(\R^d))$.
Together with estimate \eqref{4.est5}, we conclude that
$(u_i^{(\kappa)})$ is bounded in $L^2(0,T;H^{(\beta+1)/2}(\R^d))$. 

We claim that the embedding $H^{(\beta+1)/2}(\R^d)\cap L^1(\R^d;|x|dx)
\hookrightarrow L^2(\R^d)$ is compact. This claim follows from
\cite[Corollary 7.2]{NPV12}, applied to balls (which are extension domains
due to \cite[Theorem 5.4]{NPV12}), \cite[Lemma 1]{CGZ20}, and a Cantor
diagonal argument. In view of the moment estimate for $u_i^{(\kappa)}$ and
the $L^q(0,T;W^{-1,q}(\R^d))$ bound for $\pa_t u_i^{(\kappa)}$, the Aubin--Lions
lemma yields, up to a subsequence, the convergence
$$
  u_i^{(\kappa)}\to u_i\quad\mbox{strongly in }L^2(0,T;L^2(\R^d))
	\mbox{ as }\kappa\to 0.
$$
All the terms in \eqref{4.ukappa} have been already estimated in Section 
\ref{sec.est} except those depending explicitly on $\kappa$, in particular
$$
  \kappa g_0[u_i^{(\kappa)}](x) = \kappa u_i^{(\kappa)}(x)^2
	- \kappa\frac{e^{-|x|^2}}{\pi^{d/2}}\int_{\R^d}u_i^{(\kappa)}(y)^2 dy.
$$
The strong convergence of $u_i^{(\kappa)}$ in $L^2(\R^d))$ implies that 
$$
  \kappa g_0[u_i^{(\kappa)}] \to 0 \quad\mbox{strongly in }L^1(0,T;L^1(\R^d)).
$$
Therefore, we can perform the limit $\kappa\to 0$ in \eqref{4.ukappa}
to infer that $u$ is a weak solution to \eqref{1.eq}. The entropy inequality
\eqref{4.ei3} for $u^{(\kappa)}$ and Fatou's lemma yield in the 
limit $\eps\to 0$ the entropy inequality \eqref{1.ei} for $u$.
This finishes the proof of Theorem \ref{thm.ex}.


\begin{appendix}
\section{Auxiliary results}\label{sec.aux}

We collect some results from fractional calculus used in this paper. 
The following lemma can be found in \cite[Chap.~V, Sect.~1.2]{Ste70}.

\begin{lemma}[Hardy--Littlewood--Sobolev inequality]\label{lem.HLS}
Let $0<s<1$ and $1<p<\infty$. 
Then there exists a constant $C>0$ such that for all $u\in L^p(\R^d)$,
$$
  \|(-\Delta)^{-s}u\|_q\le C\|u\|_p, \quad\mbox{where }
	\frac{1}{p} = \frac{1}{q} + \frac{2s}{d}.
$$
\end{lemma}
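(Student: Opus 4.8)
The plan is to reduce the assertion to the classical Hardy--Littlewood--Sobolev bound for the Riesz potential and then to establish the latter by Hedberg's maximal-function argument. Writing $(-\Delta)^{-s}u = c_{d,s}\,I_{2s}u$ with
$$
  I_{2s}u(x) = \int_{\R^d}\frac{u(y)}{|x-y|^{d-2s}}\,dy,
$$
I note that the kernel exponent $d-2s$ lies in $(0,d)$ because $d\ge 2>2s$, and that the relation $1/q=1/p-2s/d$ forces $p<d/(2s)$, hence $q\in(p,\infty)$; this is the (implicit) admissibility range. It therefore suffices to prove $\|I_{2s}u\|_q\le C\|u\|_p$.

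First I would split the potential, for a parameter $\delta>0$ to be chosen later, as
$$
  I_{2s}u(x) = \int_{|x-y|<\delta}\frac{u(y)}{|x-y|^{d-2s}}\,dy
  + \int_{|x-y|\ge\delta}\frac{u(y)}{|x-y|^{d-2s}}\,dy =: A_\delta(x) + B_\delta(x).
$$
Decomposing the ball $\{|x-y|<\delta\}$ into the dyadic annuli $2^{-k-1}\delta\le|x-y|<2^{-k}\delta$, $k\ge 0$, and bounding each local average of $|u|$ by the Hardy--Littlewood maximal function $Mu$, I obtain
$$
  A_\delta(x) \le C\sum_{k\ge 0}(2^{-k}\delta)^{2s-d}\int_{|x-y|<2^{-k}\delta}|u(y)|\,dy
  \le C\,\delta^{2s}\,Mu(x).
$$
For the tail, Hölder's inequality gives
$$
  B_\delta(x)\le\big\||y|^{2s-d}\mathrm{1}_{\{|y|\ge\delta\}}\big\|_{p'}\,\|u\|_p
  = C\,\delta^{2s-d/p}\,\|u\|_p,
$$
where the $L^{p'}$ norm of the truncated kernel is finite precisely because $(d-2s)p'>d$, i.e.\ because $p<d/(2s)$.

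Next I would optimize in $\delta$. At a.e.\ $x$ (where $0<Mu(x)<\infty$; the null set $\{Mu=\infty\}$ and, trivially, $\{Mu=0\}$ are harmless) the choice $\delta=(\|u\|_p/Mu(x))^{p/d}$ balances the two contributions, and since $1-2sp/d=p/q$ it yields the pointwise bound
$$
  I_{2s}u(x) \le C\,\|u\|_p^{\,2sp/d}\big(Mu(x)\big)^{1-2sp/d}
  = C\,\|u\|_p^{\,1-p/q}\big(Mu(x)\big)^{p/q}.
$$
Taking $L^q(\R^d)$ norms and using $\big\|(Mu)^{p/q}\big\|_q=\|Mu\|_p^{p/q}$ together with the Hardy--Littlewood maximal inequality $\|Mu\|_p\le C_p\|u\|_p$, valid for $p>1$, gives
$$
  \|I_{2s}u\|_q \le C\,\|u\|_p^{\,1-p/q}\,\|Mu\|_p^{\,p/q} \le C\,\|u\|_p,
$$
which is the claim.

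The only genuinely nontrivial ingredient, and the main obstacle, is the maximal inequality $\|Mu\|_p\le C_p\|u\|_p$ for $p>1$; it rests on the Vitali covering lemma (for the weak type $(1,1)$ estimate), the trivial $L^\infty$ bound, and Marcinkiewicz interpolation. This is exactly the step where the hypothesis $p>1$ enters, and it accounts for the exclusion of the endpoint $p=1$. Since the lemma is quoted from \cite{Ste70}, it is reasonable to treat this maximal estimate as known; the only remaining point requiring attention is keeping track of the constraint $p<d/(2s)$ that makes $B_\delta$ finite and $q$ positive, a constraint satisfied in every application of the lemma in this paper.
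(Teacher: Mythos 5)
Your argument is correct and complete. The paper itself gives no proof of this lemma --- it is quoted directly from Stein's book --- so there is nothing to compare against; your Hedberg-type derivation (dyadic decomposition of the near part controlled by $Mu$, H\"older for the tail, optimization in $\delta$, and the Hardy--Littlewood maximal theorem) is the standard self-contained route to the diagonal case $1/q=1/p-2s/d$ of the Hardy--Littlewood--Sobolev inequality, and every exponent computation you perform (the convergence condition $(d-2s)p'>d$, the identity $1-2sp/d=p/q$, the role of $p>1$ in the maximal inequality) checks out. The only caveat worth recording is the one you already flag: the implicit admissibility constraint $p<d/(2s)$ must hold for $q$ to be finite and positive, which is indeed satisfied in each application of the lemma in the paper.
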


The following Stroock--Varopoulos-type inequality is known even for 
general functions; see, e.g., \cite[Lemma 7.2]{STV19}. 

\begin{lemma}[Generalized Stroock--Varopoulos inequality]\label{lem.GSVI}
Let $u\in H^s(\R^d)$ be such that $u\ge 0$ in $\R^d$ and 
$\log(u)(-\Delta)^s u\in L^1(\R^d)$, 
where $0<s<1$. Then
$$
  \int_{\R^d}\log(u)(-\Delta)^s u dx
	\ge 4\int_{\R^d}|(-\Delta)^{s/2}\sqrt{u}|^2 dx.
$$
\end{lemma}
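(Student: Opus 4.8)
The plan is to exploit the singular-integral representation of $(-\Delta)^s$ together with an elementary pointwise inequality for the logarithm. First I would write, using the definition of the fractional Laplacian and a standard symmetrization (exchanging the roles of $x$ and $y$ and averaging),
\begin{equation*}
  \int_{\R^d}\log(u(x))(-\Delta)^s u(x)\,dx
  = \frac{c_{d,s}}{2}\int_{\R^d}\int_{\R^d}
  \frac{\big(\log u(x)-\log u(y)\big)\big(u(x)-u(y)\big)}{|x-y|^{d+2s}}\,dx\,dy,
\end{equation*}
which is legitimate because $u\in H^s(\R^d)$ and $\log(u)(-\Delta)^s u\in L^1(\R^d)$ guarantee that the double integral converges absolutely (both factors in the numerator have the same sign, so there is no cancellation to worry about). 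Similarly, applying the same representation to $\sqrt{u}\in H^{s/2}$-type quantities, one has
\begin{equation*}
  \int_{\R^d}\big|(-\Delta)^{s/2}\sqrt{u}\big|^2 dx
  = \frac{c_{d,s/2}}{2}\int_{\R^d}\int_{\R^d}
  \frac{\big(\sqrt{u(x)}-\sqrt{u(y)}\big)^2}{|x-y|^{d+s}}\,dx\,dy.
\end{equation*}

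The second step is purely pointwise: I would prove that for all $a,b>0$,
\begin{equation*}
  (\log a-\log b)(a-b)\ \ge\ 4\big(\sqrt{a}-\sqrt{b}\big)^2.
\end{equation*}
This follows from the integral identities $a-b=\int_b^a dt$, $\log a-\log b=\int_b^a t^{-1}dt$, and $\sqrt a-\sqrt b=\tfrac12\int_b^a t^{-1/2}dt$, combined with the Cauchy–Schwarz inequality $\big(\int_b^a t^{-1/2}dt\big)^2\le (a-b)\int_b^a t^{-1}dt$; rearranging gives exactly the claimed factor $4$. (Equivalently, set $r=\sqrt{a/b}$ and reduce to $(r-1/r)\log r^2\ge 4(r-1)^2/ \dots$, but the integral argument is cleanest.)

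The third step is to insert the pointwise bound into the double integral for the left-hand side. Here the only genuine subtlety is that the two double integrals carry different kernels, $|x-y|^{-d-2s}$ versus $|x-y|^{-d-s}$, and different constants $c_{d,s}$ versus $c_{d,s/2}$; so one cannot simply compare integrands and must instead check that the Gagliardo seminorm $\int\int (\sqrt{u(x)}-\sqrt{u(y)})^2|x-y|^{-d-s}dx\,dy$ equals $c_{d,s/2}^{-1}\cdot 2\,\|(-\Delta)^{s/2}\sqrt u\|_2^2$ and that $c_{d,s}=c_{d,s/2}$ in the normalization of \cite{STV19}, or else track the constant-ratio. I expect the main obstacle to be precisely this bookkeeping of the normalizing constants together with justifying the symmetrization identity under the stated integrability hypothesis (rather than assuming $u$ smooth and compactly supported); one standard remedy is to first establish the inequality for $u$ bounded above and below away from $0$ and with enough decay, where all manipulations are transparent, and then pass to the general case by a truncation/approximation argument using Fatou's lemma on the right-hand side and dominated convergence (controlled by $\log(u)(-\Delta)^s u\in L^1$) on the left. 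Since the statement already appears in \cite[Lemma 7.2]{STV19}, one may alternatively simply quote it and sketch only the pointwise inequality as the conceptual core.
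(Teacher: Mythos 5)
Your overall strategy is exactly the paper's: symmetrize the left-hand side into a double integral, apply the pointwise inequality $(a-b)(\log a-\log b)\ge 4(\sqrt a-\sqrt b)^2$, and de-symmetrize. Your proof of the pointwise inequality via Cauchy--Schwarz on the integral representations $a-b=\int_b^a dt$, $\log a-\log b=\int_b^a t^{-1}dt$, $\sqrt a-\sqrt b=\tfrac12\int_b^a t^{-1/2}dt$ is correct and arguably cleaner than the paper's reduction to $\sinh(Z/2)\le (Z/2)\cosh(Z/2)$.

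There is, however, one concrete error, and it is precisely the point you flag as ``the main obstacle.'' Your Gagliardo representation of the right-hand side is wrong: since $\|(-\Delta)^{s/2}v\|_2^2=\int_{\R^d}v\,(-\Delta)^s v\,dx$, the correct identity is
\begin{equation*}
  \int_{\R^d}\big|(-\Delta)^{s/2}\sqrt{u}\big|^2dx
  =\frac{c_{d,s}}{2}\int_{\R^d}\int_{\R^d}
  \frac{\big(\sqrt{u(x)}-\sqrt{u(y)}\big)^2}{|x-y|^{d+2s}}\,dx\,dy,
\end{equation*}
with the \emph{same} kernel $|x-y|^{-d-2s}$ and the \emph{same} constant $c_{d,s}$ as on the left-hand side; the kernel $|x-y|^{-d-s}$ with constant $c_{d,s/2}$ that you wrote instead represents $\int v\,(-\Delta)^{s/2}v\,dx=\|(-\Delta)^{s/4}v\|_2^2$. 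With the correct identity there is no bookkeeping issue at all: the two double integrals carry identical kernels and constants, and the pointwise inequality yields the lemma immediately. By contrast, the remedy you propose for your (spurious) obstacle --- ``tracking the constant-ratio'' between $|x-y|^{-d-2s}$ and $|x-y|^{-d-s}$ --- cannot work, since kernels of different homogeneity are not comparable by any constant. So the proof closes once the representation formula is corrected, but not along the route you sketch for resolving the mismatch. The remaining remarks (absolute convergence of the symmetrized integral because the integrand is nonnegative, or approximation by truncation plus Fatou) are fine and consistent with the paper, which simply invokes the symmetry argument directly.
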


\begin{proof}
A symmetry argument shows that
$$
  \int_{\R^d}\log(u)(-\Delta)^s u dx
	= \frac{c_{d,s}}{2}\int_{\R^d}\int_{\R^d}
	\frac{(u(x)-u(y))(\log u(x)-\log u(y))}{|x-y|^{d+2s}}dxdy.
$$
Elementary computations yield
\begin{align*}
  (u(x)&-u(y))(\log u(x)-\log u(y)) \\
	&= 4\big(\sqrt{u(x)}-\sqrt{u(y)}\big)^2 
	\frac{\sqrt{u(x)}+\sqrt{u(y)}}{2(\sqrt{u(x)}-\sqrt{u(y)})}
	\big(\log \sqrt{u(x)} -\log \sqrt{u(y)}\big).
\end{align*}
We claim that
$$
  \frac{\sqrt{u(x)}+\sqrt{u(y)}}{2(\sqrt{u(x)}-\sqrt{u(y)})}
  \big(\log \sqrt{u(x)} -\log \sqrt{u(y)}\big)\geq 1.
$$
Notice that the above relation holds in the limit $u(x)\to u(y)$ as $x\to y$. 
Therefore, we can assume without loss of generality that $u(x)>u(y)$.
Defining $Z = \log \sqrt{u(x)} -\log \sqrt{u(y)} > 0$, the previous
inequality is equivalent to
$$
  \frac{e^Z + 1}{e^Z - 1}\frac{Z}{2} \geq 1\quad\Longleftrightarrow\quad
  e^Z + 1 \geq \frac{e^Z - 1}{Z/2}.
$$
Multiplying both sides of the inequality by $e^{-Z/2}/2$ yields the 
elementary relation $\sinh(Z/2)\leq(Z/2)\cosh(Z/2)$, which is true.

This gives, using the same symmetry argument as before,
\begin{align*}
  \int_{\R^d}\log u(-\Delta u)^s u dx
	&\ge 2c_{d,s}\int_{\R^d}\int_{\R^d}\frac{(\sqrt{u(x)}-\sqrt{u(y)})^2}{|x-y|^{d+2s}}
	dxdy \\
	&= 4\int_{\R^d}\sqrt{u}(-\Delta)^s\sqrt{u} dx 
	= 4\int_{\R^d}|(-\Delta)^{s/2}\sqrt{u}|^2 dx,
\end{align*}
finishing the proof.
\end{proof}

The following lemma can be proved exactly as in \cite[Lemma 1]{CGZ20}.

\begin{lemma}[Compactness]\label{lem.comp}
Let $d\ge 2$, $1\le p<d$, $m>0$, and $0<r\le p$. Then the space
$$
  \bigg\{v\in W^{1,p}(\R^d):
	\int_{\R^d}(1+|x|^2)^{m/2}|v(x)|^r dx<\infty\bigg\}
$$
is compactly embedded into $L^q(\R^d)$ for any $\max\{1,m\}\le q<dp/(d-p)$.
\end{lemma}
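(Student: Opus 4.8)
The plan is to run the classical Rellich--Kondrachov argument on balls and then upgrade to the whole space by a tightness estimate extracted from the weighted moment bound, exactly along the lines of \cite[Lemma~1]{CGZ20}. Let $(v_k)$ be a sequence in the space under consideration, so that $\|v_k\|_{W^{1,p}(\R^d)}\le C$ and $\int_{\R^d}(1+|x|^2)^{m/2}|v_k|^r\,dx\le C$, uniformly in $k$. Since $1\le p<d$, the Sobolev embedding $W^{1,p}(\R^d)\hookrightarrow L^{p^\ast}(\R^d)$ with $p^\ast:=dp/(d-p)$ is available, and on every ball $B_R=B_R(0)$ the embedding $W^{1,p}(B_R)\hookrightarrow\hookrightarrow L^q(B_R)$ is compact for $1\le q<p^\ast$. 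Applying this on $B_1,B_2,B_3,\dots$, passing to a diagonal subsequence (not relabelled) and extracting once more, we obtain a function $v$ with $v_k\to v$ strongly in $L^q(B_R)$ for every $R\in\N$ and $v_k\to v$ a.e.\ in $\R^d$.

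The decisive step is tightness, i.e.\ $\sup_k\int_{\{|x|>R\}}|v_k|^q\,dx\to 0$ as $R\to\infty$. On $\{|x|>R\}$ one has $(1+|x|^2)^{m/2}\ge(1+R^2)^{m/2}$, hence
\[
  \int_{\{|x|>R\}}|v_k|^r\,dx\;\le\;(1+R^2)^{-m/2}\int_{\R^d}(1+|x|^2)^{m/2}|v_k|^r\,dx\;\le\;C\,(1+R^2)^{-m/2},
\]
uniformly in $k$. I would then combine this decay in $L^r$ with the uniform bound $\|v_k\|_{L^{p^\ast}(\R^d)}\le C\|v_k\|_{W^{1,p}(\R^d)}\le C$ by H\"older interpolation: when $r\le q<p^\ast$ (which is the case in every application of this lemma) the plain inequality $\|v_k\|_{L^q(\{|x|>R\})}\le\|v_k\|_{L^r(\{|x|>R\})}^{1-\theta}\|v_k\|_{L^{p^\ast}(\{|x|>R\})}^{\theta}$, with $1/q=(1-\theta)/r+\theta/p^\ast$ and necessarily $\theta<1$, already gives $\sup_k\|v_k\|_{L^q(\{|x|>R\})}\le C(1+R^2)^{-m(1-\theta)/(2r)}\to 0$; in general one inserts an integrable negative power of the weight $(1+|x|^2)$ and applies H\"older with three exponents. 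The range $\max\{1,m\}\le q<p^\ast$ is exactly what makes all these exponents (and the integrability of that weight power over $\{|x|>R\}$) admissible, and the lower bound on $q$ is used only here.

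Finally I would glue the pieces together. Given $\eta>0$, tightness yields $R$ with $\sup_k\|v_k\|_{L^q(\{|x|>R\})}\le\eta$, and Fatou's lemma applied to $|v_k|^q$ gives $\|v\|_{L^q(\{|x|>R\})}\le\eta$ as well; combining this with the strong convergence $v_k\to v$ in $L^q(B_R)$ and the triangle inequality in $L^q$ (where $q\ge 1$ enters) shows $\limsup_{k\to\infty}\|v_k-v\|_{L^q(\R^d)}\le 2\eta$, so that $v_k\to v$ strongly in $L^q(\R^d)$ and $v\in L^q(\R^d)$. I expect the tightness estimate to be the only genuine obstacle: the local compactness is classical Rellich--Kondrachov and the gluing is routine, whereas converting a weighted $L^r$ bound into uniform smallness of the $L^q$-tails forces one to bring in the Sobolev embedding, and it is the interplay of these two ingredients that dictates the admissible range $\max\{1,m\}\le q<dp/(d-p)$.
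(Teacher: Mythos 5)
Your argument is correct and is precisely the proof of the result the paper cites, \cite[Lemma 1]{CGZ20}; the paper itself offers nothing beyond that citation, so there is no divergence of approach to report. The only thin spot is your parenthetical treatment of the regime $q<r$: the three-exponent H\"older inequality you allude to requires the tail integral of the negative power of the weight, $\int_{\{|x|>R\}}(1+|x|^2)^{-ma/(2c)}dx$, to be finite, and working out the admissible exponents shows this forces $q>dr/(d+m)$, which does \emph{not} follow from $q\ge\max\{1,m\}$. In fact, for $d=3$, $p=r=2$, $m=1$, $q=1$ a rescaled annulus $v_R\approx R^{-2}\mathrm{1}_{\{R<|x|<2R\}}$ (smoothed) is bounded in the space but has $\|v_R\|_1\approx R\to\infty$, so the embedding into $L^1$ genuinely fails there; the stated range of $q$ is too generous when $r>1$, and no proof can close that case. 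Since $r=1\le q$ in every application of the lemma in this paper, your main interpolation case --- which is fully correct --- is the only one that is ever needed, as you yourself observe.
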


The previous lemma allows us to prove a compactness result in $\R^d$
of Aubin--Lions type.

\begin{lemma}\label{lem.Aubin}
Let $d\ge 1$, $T>0$, $m>0$, $s\ge 0$, and let $(u_\eps)$ be a family of nonnegative
functions satisfying
\begin{align*}
  \|\sqrt{u_\eps}\|_{L^2(0,T;H^1(\R^d))} + \|\pa_t u_\eps\|_{L^1(0,T;H^{-s}(\R^d))}
	&\le C, \\
	\|u_\eps^2(\log u_\eps)_+\|_{L^1(0,T;L^1(\R^d))}
	+ \|(1+|\cdot|^{2})^{m/2} u_\eps\|_{L^\infty(0,T;L^1(\R^d))} &\le C
\end{align*}
for some $C>0$ independent of $\eps>0$. Then, up to a subsequence,
$$
  u_\eps\to u \quad\mbox{strongly in }L^2(0,T;L^{2}(\R^d))\mbox{ as }\eps\to 0.
$$
\end{lemma}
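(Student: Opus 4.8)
The goal is an Aubin--Lions-type statement on the whole space: from a uniform $L^2(0,T;H^1)$ bound on $\sqrt{u_\eps}$, a weak time-derivative bound, a spatial decay ($m$th moment) bound, and an $L^1L^1$ bound on $u_\eps^2(\log u_\eps)_+$, extract a subsequence converging strongly in $L^2(0,T;L^2)$. The natural route is to combine a time-compactness mechanism with a spatial-compactness mechanism of the type in Lemma \ref{lem.comp}. First I would reduce the problem to $\sqrt{u_\eps}$, since $L^2L^2$ convergence of $u_\eps$ is equivalent (modulo uniform integrability arguments) to suitable convergence of $v_\eps := \sqrt{u_\eps}$; note $\|v_\eps\|_{L^2(0,T;H^1(\R^d))}\le C$ already, and the moment bound on $u_\eps$ gives $\int (1+|x|^2)^{m/2} v_\eps^2\,dx\le C$, i.e.\ a weighted $L^2$ bound on $v_\eps$.

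\textbf{Step 1: spatial compactness via Lemma \ref{lem.comp}.} For a.e.\ fixed $t$, $v_\eps(\cdot,t)$ lies in the space $\{v\in H^1(\R^d):\int(1+|x|^2)^{m/2}|v|^2<\infty\}$, which Lemma \ref{lem.comp} (with $p=2$, $r=2$, $q=2$, after noting $d\ge2$ and handling the case $d=2$ separately by, e.g., raising the integrability exponent slightly or using $W^{1,p}$ with $p<2$ close to $2$) embeds compactly into $L^2(\R^d)$. Hence each time slice is relatively compact in $L^2(\R^d)$, and the bound is uniform in $t$ and $\eps$. This is the mechanism that upgrades weak $L^2$ to strong $L^2$ in space.

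\textbf{Step 2: time equicontinuity.} Here I use $\pa_t u_\eps$ bounded in $L^1(0,T;H^{-s})$ together with $u_\eps$ bounded in $L^\infty(0,T;L^1)\cap L^1(0,T;W^{1,1})$ (the latter from $\na u_\eps=2\sqrt{u_\eps}\na\sqrt{u_\eps}$ and Cauchy--Schwarz) to deduce, by interpolation, that $u_\eps\in C^0([0,T];H^{-s'})$ for some $s'$ with a uniform modulus; more precisely $\|u_\eps(t)-u_\eps(\tau)\|_{H^{-s}}\le \int_\tau^t\|\pa_t u_\eps\|_{H^{-s}}\le \omega(|t-\tau|)$ with $\omega$ independent of $\eps$ (absolute continuity of the integral of an $L^1$ function, uniformly since the integrands are equi-integrable). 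Combining Steps 1--2 via the standard Aubin--Lions--Simon argument in the triple $X\hookrightarrow\hookrightarrow L^2(\R^d)\hookrightarrow H^{-s}(\R^d)$, where $X$ is the weighted-$H^1$ space, gives a subsequence with $u_\eps\to u$ strongly in $L^2(0,T;L^2_{\mathrm{loc}})$, and in fact in $L^2(0,T;L^2(B_R))$ for every $R$.

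\textbf{Step 3: tightness to remove the loc.} This is the step I expect to be the main obstacle, and it is where the $L^1L^1$ bound on $u_\eps^2(\log u_\eps)_+$ and the moment bound enter decisively. One must show $\int_0^T\int_{|x|>R}u_\eps^2\,dx\,dt\to0$ uniformly in $\eps$ as $R\to\infty$. I would split $\{u_\eps>1\}$ and $\{u_\eps\le1\}$. On $\{u_\eps\le1\}$, $u_\eps^2\le u_\eps$ and the moment bound gives $\int_{|x|>R}u_\eps\le R^{-m}\int(1+|x|^2)^{m/2}u_\eps\le CR^{-m}$, integrating in $t$ gives the decay. On $\{u_\eps>1\}$, write $u_\eps^2 = \dfrac{u_\eps^2(\log u_\eps)_+}{(\log u_\eps)_+}$ is not directly helpful; instead use, for any $K>1$, $u_\eps^2 \le u_\eps^2\mathrm{1}_{\{u_\eps\le K\}} + \dfrac{u_\eps^2(\log u_\eps)_+}{\log K}\mathrm{1}_{\{u_\eps>K\}}$, so that $\int_0^T\int_{|x|>R}u_\eps^2 \le K\int_0^T\int_{|x|>R}u_\eps + (\log K)^{-1}\|u_\eps^2(\log u_\eps)_+\|_{L^1L^1}\le CKR^{-m} + C(\log K)^{-1}$; choosing first $K$ large, then $R$ large, makes this arbitrarily small uniformly in $\eps$. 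Combined with Step 2's strong convergence on balls and a Cantor diagonal argument over $R\to\infty$, this yields $u_\eps\to u$ strongly in $L^2(0,T;L^2(\R^d))$, completing the proof.
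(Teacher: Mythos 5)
There is a genuine gap in the way Steps 1 and 2 are assembled. Your spatial compactness (Step 1) is for $v_\eps=\sqrt{u_\eps}$ in the weighted $H^1$ space, but your time-derivative information is for $u_\eps$ in $H^{-s}$. The Aubin--Lions(--Simon) lemma requires one and the same family to be bounded in the compactly embedded space $X$ and to have equicontinuous time translates in $Y$; you cannot feed it compactness of $\sqrt{u_\eps}$ together with equicontinuity of $u_\eps$. As written, the triple ``$X\hookrightarrow\hookrightarrow L^2\hookrightarrow H^{-s}$ with $X$ the weighted $H^1$ space'' is not applicable to $(u_\eps)$, since $u_\eps$ itself is not bounded in any weighted $H^1$ (only in $W^{1,1}$, via $\na u_\eps=2\sqrt{u_\eps}\na\sqrt{u_\eps}$, as you note); and it is not applicable to $(v_\eps)$ either, since no bound on $\pa_t\sqrt{u_\eps}$ is available. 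To close this you would need a nonlinear Aubin--Lions lemma, or you must run the argument on $u_\eps$ itself: this is what the paper does, applying Lemma \ref{lem.comp} with $p=r=1$ to get $V=\{v\in W^{1,1}:\int(1+|x|^2)^{m/2}|v|<\infty\}\hookrightarrow\hookrightarrow L^q(\R^d)$ for $1\le q<d/(d-1)$, and then Aubin--Lions in $V\hookrightarrow L^q\hookrightarrow H^{-\max\{d/2,s\}}$.

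This correction changes where the hypothesis on $u_\eps^2(\log u_\eps)_+$ must do its work. The route through $W^{1,1}$ only yields strong convergence in $L^2(0,T;L^q(\R^d))$ with $q<d/(d-1)<2$, already globally in space (Lemma \ref{lem.comp} gives compactness into $L^q(\R^d)$, not merely $L^q_{\rm loc}$), so no separate tightness step is required. The real remaining task is the integrability upgrade from exponent $q<2$ to exponent $2$ on all of $\R^d\times(0,T)$; the paper does this with the convex function $f(z)=z^2\log(1+z^2)$ and the inequality $z^2\le\delta f(z)+C(\delta)|z|^q$ applied to $(u_\eps-u)/2$, using $f\big(\tfrac{u_\eps-u}{2}\big)\le f\big(\tfrac{u_\eps+u}{2}\big)\le\tfrac12(f(u_\eps)+f(u))$ and the uniform bound on $\int\int u_\eps^2(\log u_\eps)_+$. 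Your Step~3 splitting $u_\eps^2\le Ku_\eps\mathrm{1}_{\{u_\eps\le K\}}+u_\eps^2(\log u_\eps)_+/\log K$ is correct and is the same uniform-integrability mechanism, but you deploy it only on $\{|x|>R\}$ to cure a loss of compactness at infinity that does not actually occur; you would still be left with convergence in $L^q(B_R)$, $q<2$, rather than $L^2(B_R)$, on bounded sets. Applying the same splitting (or the paper's convexity inequality) on all of $\R^d\times(0,T)$, combined with convergence in measure from the $L^q$ convergence, would repair this.
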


\begin{proof}
The bounds for $(u_\eps)$ imply that $\na u_\eps = 2\sqrt{u_\eps}\na\sqrt{u_\eps}$
is bounded in $L^2(0,T;L^1(\R^d))$ and consequently, $(u_\eps)$ is bounded
in $L^2(0,T;W^{1,1}(\R^d))$. By Lemma \ref{lem.comp},
$V:=\{v\in W^{1,1}(\R^d):\int_{\R^d}(1+|x|^2)^{m/2}v(x)dx<\infty\}$ is
compactly embedded into $L^q(\R^d)$ for any $1\le q<d/(d-1)$. 

If $s\ge d/2$, the embedding $H^s(\R^d)\hookrightarrow L^{q'}(\R^d)$ for
$q'=q/(q-1)>d$ implies that $L^q(\R^d)\hookrightarrow H^{-s}(\R^d)$ is continuous.
Thus, we can apply the standard Aubin--Lions lemma with the spaces $V\hookrightarrow
L^q(\R^d)\hookrightarrow H^{-s}(\R^d)$. If $s<d/2$, it holds that
$H^{d/2}(\R^d)\hookrightarrow H^{s}(\R^d)$ and
$H^{-s}(\R^d)\hookrightarrow H^{-d/2}(\R^d)$ and consequently, $(\pa_t u_\eps)$
is bounded in $L^1(0,T;H^{-d/2}(\R^d))$. In any case, 
the Aubin--Lions lemma can be applied
with $V\hookrightarrow L^q(\R^d)\hookrightarrow H^{-\max\{d/2,s\}}(\R^d)$.
Thus, there exists a subsequence of $(u_\eps)$, which is
not relabeled, such that $u_\eps\to u$ strongly in $L^2(0,T;L^q(\R^d))$
as $\eps\to 0$.

It remains to show that this convergence holds in $L^2(0,T;L^2(\R^d))$. To this
end, we observe that there exists $C>0$ such that
$f(z):=z^2\log(1+z^2)\le C(1+z^2(\log z^2)_+)$ for $z\in\R$, 
and for any $1<q<2$ and $\delta>0$, 
there exists $C(\delta)>0$ such that $z^2\le \delta f(z) + C(\delta)|z|^q$ for $s\in\R$.
Since $f$ is even, increasing on $[0,\infty)$, and convex, 
this gives with $z=(u_\eps-u)/2$ and for any $\delta>0$,
\begin{align*}
  \frac14\|u_\eps-u\|_{L^2(0,T;L^2(\R^d))}^2
	&\le \delta \int_0^T\int_{\R^d}f\bigg(\frac{u_\eps-u}{2}\bigg)dx dt
	+ C(\delta)\|u_\eps-u\|_{L^q(0,T;L^q(\R^d))}^q \\
	&\le \delta \int_0^T\int_{\R^d}f\bigg(\frac{u_\eps+u}{2}\bigg)dx dt
	+ C(\delta)\|u_\eps-u\|_{L^q(0,T;L^q(\R^d))}^q \\
	&\le \frac{\delta}{2}\int_0^T\int_{\R^d}(f(u_\eps)+f(u))dxdt
	+ C(\delta)\|u_\eps-u\|_{L^q(0,T;L^q(\R^d))}^q.
\end{align*}
We apply the limes superior $\eps\to 0$ to both sides and use the strong
convergence of $(u_\eps)$:
$$
  \limsup_{\eps\to 0}\|u_\eps-u\|_{L^2(0,T;L^2(\R^d))}^2 
	\le 2\delta \limsup_{\eps\to 0}\int_0^T\int_{\R^d}
	(f(u_\eps)+f(u))dx dt	\le \delta C.
$$
Since $\delta>0$ is arbitrary, the conclusion follows.
\end{proof}
\end{appendix}


\end{document}